\newcommand\sL{\mathcal{L}}
\newcommand{\N}{\mathbb{N}}
\newcommand{\Z}{\mathbb{Z}}
\renewcommand{\S}{\mathcal{S}}
\newcommand{\M}{\mathcal{M}}
\newcommand{\C}{\mathcal{C}}
\newcommand{\G}{\mathcal{G}}
\newcommand{\R}{\mathcal{R}}
\newcommand{\leqnomode}{\tagsleft@true}
\newcommand{\reqnomode}{\tagsleft@false}
\newtheorem{theorem}{Theorem}[section]
\newtheorem{lemma}[theorem]{Lemma}
\newtheorem{observation}[theorem]{Observation}
\newtheorem{definition}[theorem]{Definition}
\newtheorem{remark}[theorem]{Remark}
\title{Subsystems of Open Induction}
\author{Stefan Hetzl\footnote{\url{stefan.hetzl@tuwien.ac.at}}, Johannes Weiser\\
Institute of Discrete Mathematics and Geometry\\
TU Wien}
\begin{document}

\maketitle
\begin{abstract}
We study subsystems of open induction which are strongly connected to methods of
 automated inductive theorem proving.
Specifically, we consider systems obtained from restricting induction to
 atoms, literals, clauses, and dual clauses.
We obtain a complete picture of the relationships between these systems
 in the language of arithmetic and its sublanguages
 in terms of inclusion and strict inclusion.
\end{abstract}

\section{Introduction}

Theories of arithmetic play a central role in mathematical logic.
They are intimately connected to various topics ranging from the incompleteness
 theorems~\cite{Smullyan92Goedel,Hajek93Metamathematics} to computational complexity~\cite{Buss86Bounded,Krajicek19Proof}.
Weak theories of arithmetic, such as, for example, open induction, are closely
 related to certain classes of rings~\cite{Shepherdson64Nonstandard,Shepherdson67Rule,Kaye91Models}.

Weak theories of arithmetic are also of central importance for understanding
 the logical strength of methods for automated inductive theorem proving.
The aim of this area of computer science is to develop algorithms that find formal proofs by
 induction automatically.
A plethora of applications of such algorithms can be found, e.g., in software verification
 and formal mathematics.
The connection to weak theories of arithmetic has been established through a number
 of recent results, for example:
equational theory exploration methods, such as HipSpec~\cite{Claessen13Automating}, are bounded in strength
 by induction on atoms~\cite[Chapter 8]{Vierling24Limits}.
The strength of the extension of the Vampire theorem prover presented in~\cite{Hajdu20Induction}
 is bounded by
 induction on literals~\cite{Hetzl23Induction}.
The strength of multi-clause induction in Vampire~\cite{Hajdu21Induction} is bounded by a
 certain form of induction
 on clauses~\cite[Chapter 4]{Vierling24Limits}.
The strength of clause set cycles, a theoretical model of methods such as the $n$-clause
 calculus~\cite{Kersani13Combining}, is bounded by a certain restriction of the
 parameter-free $\exists_1$ induction rule~\cite{Hetzl22Unprovability}.

In mathematical logic, open induction has been studied since, at least, \cite{Shoenfield58Open}
 and~\cite{Shepherdson64Nonstandard,Shepherdson65Nonstandard,Shepherdson67Rule}.
In this paper, we carry out a systematic study of subsystems of open induction along the
 lines of~\cite{Shepherdson65Nonstandard}.
We consider sublanguages of the language of arithmetic and, for each of these sublanguages, the
 subsystems of open induction obtained from
 restricting induction formulas to atoms, literals, clauses, and dual clauses respectively.
This choice of restrictions of induction is explained by the importance of these classes
 of formulas in automated deduction.
We give a complete characterisation of the relationships between these systems in terms
 of inclusion and strict inclusion.
These results provide a solid logical basis for the study of methods of automated
 inductive theorem proving for the language of arithmetic.

As a by-product, our results provide a number of {\em practically relevant independence results}.
By this we mean the following: obviously there are many true statements which are not provable in certain
 subsystems of open induction.
The easiest way of exhibiting concrete such examples is to rely on results from the literature concerncing
 stronger systems, such as, for example, Peano arithmetic (PA).
Thus, for example, the consistency of PA~\cite{Smorynski77Incompleteness} and the Paris-Harrington
 theorem~\cite{Paris77Mathematical} are independent from open induction
 or any of its subsystems considered in this paper.
However, from the point of view of automated inductive theorem proving, these statements are not convincing
 since they would be considered outside the scope of automation for purely practical reasons anyway.
By a practically relevant independence result we mean one whose sentence is of a content and a
 level of complexity that one would, at least in principle, consider to be within the scope of automation.
A good example is Shepherdson's result that the irrationality of $\sqrt{2}$
 is independent of open induction~\cite{Shepherdson64Nonstandard}.

This paper is based on Chapter 4 of the second author's master's
 thesis~\cite{Weiser25Subsystems}.
Another type of subsystems of open induction, which are defined by restricting the polynomial
 degree of the induction formulas, has been studied in~\cite{Boughattas91Arithmetique,Mohsenipour07Note}.

\section{Preliminaries}

\subsection{Language and Axioms}

In this paper we will work in first-order logic with equality.
We work with the language $\sL = \{0,s, p, +, \cdot\}$ or subsets of it.
$0$ is 0-ary, $s$ and $p$ are unary and $+$ and $\cdot$ are binary.
For the sake of readability, we will often write $sx$ and $px$ instead of $s(x)$ and $p(x)$
 respectively.
If it is clear from the context, we might also drop the $\cdot$ and write $xy$ instead
 of $x \cdot y$.
Equality ($=$) is a binary predicate with the usual axiomatization.
Let $\Sigma$ be a language.
A set of sentences $T$ over $\Sigma$ is called a $\Sigma$-theory.
If $\Sigma$ is clear from the context, we call $T$ a theory.
For any theory $T$, we write $\mathrm{Th}(T)$ for the deductive closure of $T$, i.e.,
 $\mathrm{Th}(T) = \{\varphi \mid \varphi \text{ is a formula and } T \vdash \varphi\}$.
For a model $\M$ we write $a\in \M$ to denote that $a$ is an element of the domain of $\M$.
Let $\M$ be any model over the language $\sL' \subseteq \sL$ with $0,s \in \sL'$. The standard part $\S_\M$ of the model $\M$ is defined as $\{(s^n0)^\M \mid n \geq 0\}$.
An element $a \in \M$ is a standard element if $A \in \S_\M$ and a non-standard element otherwise.

Often, it makes sense to consider the graph induced by the constructor $s$
\begin{definition}
    Let $\{0,s\} \subseteq \sL' \subseteq \sL$ be a language and let $\M$ be an $\sL'$ structure.
    The directed graph $\G = (V,E)$ induced by $\M$ is given by the domain $V$ of $\M$ and the edge relation $E$ defined by $a \mathbin{E} b \Leftrightarrow b = s^\M a$. The weakly connected components of the undirected reduct of $\G$ are called comparison classes.
\end{definition}

Our base axioms are the following $\sL$ formulas:
\begin{flalign}
    \tag*{$\mathbf{A1}$} &s(x) \neq 0 && \label{ax:arithmetics_a1} \\
    \tag*{$\mathbf{A2}$} &p(0) = 0 && \label{ax:arithmetics_a2} \\
    \tag*{$\mathbf{A3}$} &p(s(x)) = x && \label{ax:arithmetics_a3} \\
    \tag*{$\mathbf{A3a}$} &sx = sy \rightarrow x = y && \label{ax:arithmetics_a3a} \\
    \tag*{$\mathbf{A4}$} &x + 0 = x && \label{ax:arithmetics_a4} \\
    \tag*{$\mathbf{A5}$} &x + s(y) = s(x+y) && \label{ax:arithmetics_a5} \\
    \tag*{$\mathbf{A6}$} &x\cdot 0 = 0 && \label{ax:arithmetics_a6} \\
    \tag*{$\mathbf{A7}$} &x \cdot s(y) = x\cdot y + x && \label{ax:arithmetics_a7}
\end{flalign}

We have the following scheme for induction:
\begin{flalign}
    & \varphi(0, \overline{z}) \land \forall x : \varphi(x, \overline{z}) \rightarrow \varphi(sx, \overline{z})\label{ax:arithmetics_lhs}\tag*{$\mathbf{LHS}(\varphi(x, \overline{z}))$} && \\
    & \hyperref[ax:arithmetics_lhs]{\mathbf{LHS}(\varphi(x, \overline{z}))} \rightarrow \forall x : \varphi(x, \overline{z}) \label{ax:arithmetics_induction}\tag*{$\mathbf{I}(\varphi(x, \overline{z}))$} &&
\end{flalign}

In the scheme above $\overline{z}$ are the parameters of the formula $\varphi$. Again, for the purpose of legibility, we might sometimes not mention parameters explicitly as all formulas may contain them if not stated otherwise.

Note that, in the list of the axioms above, we have the axioms $\hyperref[ax:arithmetics_a3]{A3}$ and $\hyperref[ax:arithmetics_a3a]{A3a}$. The first one does not state directly that $s$ is injective, but it follows trivially from it. Thus, whenever we have the axiom $\hyperref[ax:arithmetics_a3]{A3}$, we can also use $\hyperref[ax:arithmetics_a3a]{A3a}$ freely.

Having established the general context, we can consider various theories over $\sL$ and its subsets in the following sections. We start by taking the empty theory and gradually expand it. In addition, the language will be enriched step by step. For each fixed language we are interested in how certain subsets of open induction relate to each other.

\begin{definition}
    Let $\Sigma$ be any language.
    An atom is a formula that contains no quantifiers or logical connectives.
    A literal is an atom or a negated atom.
    A clause is a disjunction of literals.
    A dual clause is a conjunction of literals.
    An open formula is a quantifier-free formula.
\end{definition}
\begin{definition} Let $\Sigma$ be any language. We define:
    \begin{align*}
        \mathrm{IAtom} &= \{\hyperref[ax:arithmetics_induction]{\mathbf{I}(A)} \mid A \text{ is an atom}\} \\
        \mathrm{ILiteral} &= \{\hyperref[ax:arithmetics_induction]{\mathbf{I}(L)} \mid L \text{ is a literal}\}\\
        \mathrm{IClause} &= \{\hyperref[ax:arithmetics_induction]{\mathbf{I}(C)} \mid C \text{ is a clause}\}\\
        \mathrm{IDClause} &= \{\hyperref[ax:arithmetics_induction]{\mathbf{I}(D)} \mid D \text{ is a dual clause}\} \\
        \mathrm{IOpen} &= \{\hyperref[ax:arithmetics_induction]{\mathbf{I}(F)} \mid F \text{ is an open formula}\}
    \end{align*}
\end{definition}
By construction, these sets of formulas are partially ordered by the subset relation.
Moreover, we can consider the binary relation $\preceq$ on the set of all theories over a
 given language defined by $\Gamma \preceq \Delta \Leftrightarrow \mathrm{Th}(\Gamma) \subseteq \mathrm{Th}(\Delta)$. The relations $\approx$ and $\precneq$ are inherited as usual: $\Gamma \approx \Delta\Leftrightarrow \Gamma \preceq \Delta \text{ and } \Delta \preceq \Gamma$ and $\Gamma \precneq \Delta\Leftrightarrow \Gamma \preceq \Delta \text{ and } \neg (\Gamma \approx \Delta)$.
Thus, the subsets of open induction are partially ordered by the relation $\preceq$ with the following Hasse diagram for any theory $B$:
\begin{center}
    \begin{tikzpicture}[scale=1]
      \node (IOpen) at (0,2) {$B + \mathrm{IOpen}$};
      \node (IClause) at (2,0) {$B + \mathrm{IClause}$};
      \node (IDClause) at (-2,0) {$B + \mathrm{IDClause}$};
      \node (ILiteral) at (0,-2) {$B + \mathrm{ILiteral}$};
      \node (IAtom) at (0,-4) {$B + \mathrm{IAtom}$};
      \node (Base) at (0,-6) {$B$};
      \draw[dashed] (Base) -- (IAtom) -- (ILiteral) -- (IClause) -- (IOpen) -- (IDClause) -- (ILiteral);
    \end{tikzpicture}
\end{center}
In diagrams such as the above, dashed lines represent $\preceq$ and solid lines
 represent $\precneq$.
Clearly, we cannot draw any solid lines in the above general diagram, but we will need the
 distinction for the specific settings in later sections.

Before we proceed, we define an auxiliary axiom and prove a general result about it, which we will often refer to:
\begin{flalign}
    & x = 0 \lor\exists y : x = sy \label{ax:arithmetics_sur}\tag*{$\mathbf{SUR}$} &&
\end{flalign}
\begin{theorem}
\label{theorem:general_empty_theory_iliteral_sur}
    Let $\sL$ be any language extending $\{0,s\}$. Then $\mathrm{ILiteral} \vdash \hyperref[ax:arithmetics_sur]{\mathrm{SUR}}$.
\end{theorem}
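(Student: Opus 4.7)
The plan is to establish $\mathrm{SUR}$ by contradiction, applying a literal induction instance whose step case is vacuously satisfied by the hypothesis for contradiction. Specifically, I would fix $x$ and reason as follows: to prove $x = 0 \lor \exists y \, x = sy$, assume both $x \neq 0$ and $\forall y \, x \neq sy$, and derive a contradiction.

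The key idea is to apply induction to the literal $L(w, x) := (w \neq x)$, where $w$ is the induction variable and $x$ is a \emph{parameter}. The left-hand side $\mathrm{LHS}(L(w,x))$ requires showing (i) $0 \neq x$, which is immediate from $x \neq 0$, and (ii) $\forall w \, (w \neq x \to sw \neq x)$. For (ii), the key observation is that the induction hypothesis is not even needed: from the assumption $\forall y \, x \neq sy$ we obtain $x \neq sw$ by instantiating $y := w$, hence $sw \neq x$ unconditionally. Since $L(w,x)$ is a literal, the instance $\mathbf{I}(L(w,x))$ belongs to $\mathrm{ILiteral}$, and applying it yields $\forall w \, w \neq x$. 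Instantiating $w := x$ produces $x \neq x$, a contradiction.

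I do not foresee a genuine obstacle here; the only subtle point worth flagging is that the argument uses \emph{no} axiom besides literal induction and the background equality axioms—in particular, neither $\mathbf{A1}$ nor the presence of $p$ in the language is required, which matches the generality of the statement (``any language extending $\{0,s\}$''). The parametric nature of the induction scheme is essential: without the ability to use $x$ as a parameter inside the literal, one could not arrange for the inductive step to be satisfied vacuously.
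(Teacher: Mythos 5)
Your proposal is correct and is essentially the paper's own argument: the paper also applies induction to the literal $L(w,z) \equiv w \neq z$ with the putative non-successor element as the parameter, observes that the step case holds vacuously, and derives the contradiction at $w = z$. The only difference is presentational — the paper argues semantically in an arbitrary model of $\mathrm{ILiteral}$, whereas you give the corresponding direct deduction — and your remark that the parametric form of the scheme is essential matches the paper's use of the parameter $b$.
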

\begin{proof}
    Take any model $\M$ of $\mathrm{ILiteral}$. Assume that there is some element $0\neq b \in \M$ s.t.\ $b$ does not lie in the image of $s^\M$. Consider the literal $L(x,z) \equiv x \neq z$. Then $\M \vDash \hyperref[ax:arithmetics_lhs]{\mathbf{LHS}(L(x,b))}$, but clearly, $\M \nvDash \forall x : L(x,b)$. Thus, $\M \nvDash \mathrm{ILiteral}$, which is a contradiction.
\end{proof}

\subsection{\texorpdfstring{The model $\N_\infty$}{The model N infinity}}
In this subsection, we will define a model that is in some sense the \textit{least non-standard model} of the natural numbers. We will prove some properties of this model and make heavy use of this model in the rest of this paper.

In this section we consider the language $\sL = \{0,s,p, +, \cdot\}$ and the theory $T = \{\hyperref[ax:arithmetics_a1]{A1}, \dots, \hyperref[ax:arithmetics_a7]{A7}\}$.
\begin{definition}
    The model $\N_\infty$ is constructed in the following way: The domain is given by $\N \cup \{\infty\}$. The symbols are interpreted in the following way:
    \begin{itemize}
        \item $0$ is interpreted as $0$
        \item All the symbols $s,p,+,\cdot$ are interpreted canonically in the standard part of the model
        \item $s(\infty) = p(\infty) = \infty$
        \item For any $x$ in the domain, $\infty + x = x+\infty = \infty$
        \item $\infty \cdot 0 = 0\cdot\infty = 0$ and for any $x \neq 0$, $x \cdot\infty = \infty \cdot x = \infty$
    \end{itemize}
\end{definition}

\begin{lemma}
\label{lemma:arithmetics_ninf_model}
    $\N_\infty \vDash T$
\end{lemma}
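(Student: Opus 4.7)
The plan is to verify each of the axioms \ref{ax:arithmetics_a1}--\ref{ax:arithmetics_a7} holds in $\N_\infty$ by case analysis on whether the free variables are instantiated by standard elements or by the nonstandard element $\infty$. Since the interpretation on $\N \subseteq \N_\infty$ is the canonical one and $\N$ itself is well known to satisfy $T$, all cases where every variable is standard are immediate. So the real work is handling the cases that involve $\infty$.

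For the unary axioms this is trivial by direct inspection of the definition: \ref{ax:arithmetics_a1} holds since $s^{\N_\infty}(\infty) = \infty \neq 0$; \ref{ax:arithmetics_a2} is standard; \ref{ax:arithmetics_a3} at $\infty$ reduces to $p(s(\infty)) = p(\infty) = \infty$; \ref{ax:arithmetics_a4} at $\infty$ is built into the definition $\infty + 0 = \infty$; and \ref{ax:arithmetics_a6} is built in as $\infty \cdot 0 = 0$.

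The binary axioms \ref{ax:arithmetics_a5} and \ref{ax:arithmetics_a7} require a small case split on which arguments are equal to $\infty$. For \ref{ax:arithmetics_a5}, I would check: if $y = \infty$, both sides reduce to $\infty$ since $s(\infty) = \infty$ and addition by $\infty$ absorbs; if $y \in \N$ and $x = \infty$, both sides again reduce to $\infty$ using absorption and $s(\infty) = \infty$. For \ref{ax:arithmetics_a7}, the subtle point is the interaction between multiplication and the absorption conventions, so the cases $x = 0$, $x = \infty$ (split on $y = 0$ versus $y \neq 0$), and $x \in \N\setminus\{0\}$ with $y = \infty$ must each be checked individually; in every case both sides of $x \cdot s(y) = x \cdot y + x$ evaluate, by the definitions, to the same element ($0$ when $x = 0$, and $\infty$ in all remaining nonstandard cases).

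I do not anticipate any genuine obstacle: the whole argument is an unpacking of the definition of $\N_\infty$. The only place where one has to be mildly careful is axiom \ref{ax:arithmetics_a7} at $x = 0$ and at $x = \infty$ with $y = 0$, because the convention $\infty \cdot 0 = 0$ must be combined correctly with $0 + \infty = \infty$; once the case split is written out explicitly, both sides agree.
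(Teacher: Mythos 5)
Your proposal is correct and matches the paper's approach: the paper simply states that the lemma ``holds trivially by construction,'' and your case analysis is exactly the routine verification being elided. All the cases you flag (in particular \ref{ax:arithmetics_a7} with $x=\infty$, $y=0$, where $\infty\cdot s(0)=\infty$ must match $\infty\cdot 0+\infty=0+\infty=\infty$) do check out.
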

\begin{proof}
    This holds trivially by construction.
\end{proof}

\begin{lemma}\label{lemma:term_polynomial}
Let $t(x, \overline{z})$ be an $\sL$ term.
Let $\overline{b}$ be a tuple of natural numbers.
Then there is some $N \in \N$ and a polynomial $q \in \Z[x]$ s.t.\
 $\N_\infty \vDash t(n,\overline{b}) = q(n)$ for all $n \geq N$.
Moreover, if $x$ occurs in $t$, then $q$ is not constant.
\end{lemma}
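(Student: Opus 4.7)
The plan is a straightforward structural induction on $t$. In the base cases $t \in \{0, x, z_i\}$, one sets $q$ to the polynomial $0$, $x$, or the constant $b_i$ respectively, and each claim is immediate with $N = 0$. For the inductive step, I will first record the supporting observation that on standard inputs every term evaluates to a standard natural number in $\N_\infty$; hence for $n$ larger than the maximum of the subterms' bounds $N_i$, all the arithmetic inside $t$ takes place in $\Z$ and the polynomial identity $t(n, \overline{b}) = q(n)$ is just the usual compositional definition. The combining polynomials are then $q = q' + 1$ in the successor case, $q = q_1 + q_2$ for addition, and $q = q_1 \cdot q_2$ for multiplication. The predecessor case requires a small split: if the inner polynomial $q'$ is the zero polynomial then $p(0) = 0$ forces $q = 0$, and otherwise $q'(n) \geq 1$ for $n$ sufficiently large so that $q = q' - 1$ works.

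The main obstacle is the nonconstancy refinement, and to carry it through cleanly I will strengthen the induction hypothesis by also asserting that the polynomial $q$ produced is eventually nonnegative on $\N$; this is automatic since $q(n)$ equals a natural-valued term. Consequently the leading coefficient of $q$ is nonnegative, and strictly positive whenever $q$ is nonconstant. With this strengthening, nonconstancy propagates painlessly through the cases of $s$, $p$, and $+$, because leading terms with nonnegative coefficients cannot cancel when summed and because $q' \mapsto q' - 1$ manifestly preserves the degree. The genuinely delicate case is multiplication: if $x$ occurs in $t_1 \cdot t_2$ via $t_1$, the product $q_1 \cdot q_2$ is nonconstant as soon as $q_2$ is not the zero polynomial, so one must either read ``$x$ occurs in $t$'' in its essential inductive sense (disregarding occurrences inside a factor whose polynomial vanishes), or strengthen the induction once more to track exactly when $q \equiv 0$ and exclude that degenerate branch. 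This bookkeeping is where I expect the bulk of the proof's detail to sit; the rest is routine compositional calculation.
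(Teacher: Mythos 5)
Your proof takes essentially the same route as the paper's: a structural induction on $t$ that builds $N$ and $q$ compositionally ($q_r+1$, $q_r-1$, $q_r+q_u$, $q_r\cdot q_u$), and it is correct. In fact, the two places you flag as delicate are exactly the places where the paper's own sketch is too terse. For the predecessor, the paper sets $q_{pr}=q_r-1$ and $N_{pr}=N_r+1$ unconditionally, which already fails for $t\equiv p(0)$ (it yields $q=-1$ while the model gives $0$); your split on $q'\equiv 0$ is the needed fix, and your remark that a nonzero $q'$ is eventually $\geq 1$ (finitely many roots, values in $\N$) closes that case. For multiplication, the ``moreover'' clause is false as literally stated --- for $t\equiv x\cdot 0$ the only admissible $q$ is the constant $0$ even though $x$ occurs in $t$ --- so your proposal to track the predicate $q\equiv 0$ through the induction, or to read ``$x$ occurs in $t$'' modulo factors whose polynomial vanishes, is not optional bookkeeping but a genuine repair; the paper hides this behind ``analogously for $\cdot$''. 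Whichever repair you adopt, check that it still delivers what the sole application needs (case 2b of Lemma~\ref{lemma:arithmetics_ninf_equations}): a nonconstant $q$ whenever the induced function $a\mapsto t_1^{\N_\infty}(a,\overline{m})$ is nonconstant on $\N_\infty$. Finally, your auxiliary observation that $q(n)=t(n,\overline{b})\in\N$ for $n\geq N$, hence that nonconstant $q$'s have positive leading coefficient, is the cleanest way to rule out cancellation in the $+$ case, a point the paper does not address at all.
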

\begin{proof}
The polynomial and the lower bound are constructed inductively, keeping track of the assertion
 that the polynomial is not constant at each step.
If $t \equiv x$, $t \equiv z_i$ or $t \equiv 0$, $N_t = 0$ and it is clear how to choose $q_t$.
If $t = pr$ for some term $r$, then set $q_t$ to be $q_r - 1$ and $N_t$ to be $N_r + 1$.
If $t = sr$, then $q_t = q_r+1$ and $N_t = N_r$.
If $t = r+u$, then $q_t = q_r+q_u$ and $N_t = \max{\{N_r, N_u\}}$.
Analogously for $\cdot$.
\end{proof}

\begin{lemma}
\label{lemma:arithmetics_ninf_equations}
Let $A(x,z_1,\ldots,z_k) \equiv t_1(x,\overline{z}) = t_2(x,\overline{z})$ be any atom.
Let $m_1,\ldots,m_k \in \N_\infty$.
If $\N_\infty \vDash A(n,\overline{m})$ for all $n \in \N$, then
 $\N_\infty \vDash \forall x : A(x,\overline{m})$.
\end{lemma}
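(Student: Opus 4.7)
The plan is to reduce the lemma to an auxiliary claim established by structural induction on terms. Call a sequence $(a_n)_{n\in\N}$ of elements of $\N_\infty$ \emph{tame} if one of the following holds: (a) it is eventually equal to some standard $c\in\N$; (b) it is eventually equal to $\infty$; or (c) its entries are eventually standard and it is unbounded. In case~(a) the \emph{limit} is $c$, and in cases~(b) and~(c) the limit is $\infty$. I claim that for every term $r(x,\overline z)$ and every tuple $\overline m\in\N_\infty^k$, the sequence $(r(n,\overline m))_{n\in\N}$ is tame and its limit equals $r(\infty,\overline m)$.

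Granting this claim the lemma follows immediately. Applied to $t_1$ and $t_2$, the hypothesis $t_1(n,\overline m)=t_2(n,\overline m)$ for all $n\in\N$ means the two sequences coincide; hence they lie in the same tame type with the same limit, so $t_1(\infty,\overline m)=t_2(\infty,\overline m)$, i.e.\ $\N_\infty\vDash A(\infty,\overline m)$.

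The base cases $r\in\{0,x\}$ and $r=z_i$ of the claim are immediate. For the inductive steps with $s$, $p$ and $+$, the corresponding operation in $\N_\infty$ sends tame sequences (or pairs thereof) to tame sequences in a manner that matches the operation on the limits; for example, applying $s$ to a sequence of type~(a) with constant $c$ yields type~(a) with constant $c+1$, applying $s$ to type~(b) yields type~(b), etc. The main obstacle is the product $r=r'\cdot r''$, because $\N_\infty$ validates the non-standard identity $0\cdot\infty=\infty\cdot 0=0$, so limits do not naively multiply. I would handle this by a case split on the types of $r'$ and $r''$. If either factor has limit $0$, it is of type~(a) with $c=0$, hence eventually $0$, so the product is eventually $0$ and matches $0\cdot r''(\infty)=r'(\infty)\cdot 0=0$ in $\N_\infty$. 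Otherwise both factors have nonzero limits, which in type~(a) forces the sequence to be eventually $\geq 1$, in type~(b) to be eventually $\infty$, and in type~(c) automatically; a short verification of the remaining subcases (type (a) against (a), (b), (c); (b) against (b), (c); (c) against (c)) shows that the product sequence remains tame and that its limit equals the $\N_\infty$-product of the two limits, closing the induction.
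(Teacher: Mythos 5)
Your reduction of the lemma to the claim about term sequences is fine, and the overall strategy --- a structural induction on terms carrying a ``limit'' invariant, rather than the paper's case split on where $x$ occurs combined with Lemma~\ref{lemma:term_polynomial} --- is viable. However, the invariant you chose is too weak, and the multiplication step (which you correctly single out as the main obstacle) genuinely fails with it. Your type~(c) only requires the sequence to be eventually standard and unbounded; unboundedness does not imply that the sequence is eventually nonzero, let alone that it tends to infinity. The sequence $0,1,0,2,0,3,\dots$ is of type~(c) with limit $\infty$ in your sense. Multiplying it by a type~(b) sequence (say the constant sequence $\infty$) yields $0,\infty,0,\infty,\dots$, which is not tame, so the subcase (b) against (c) does not close; and multiplying $0,1,0,2,\dots$ by $1,0,2,0,\dots$ yields the zero sequence, whose limit $0$ is not the $\N_\infty$-product $\infty\cdot\infty=\infty$ of the two limits, so the subcase (c) against (c) does not preserve the assertion that the limit of the product equals the product of the limits. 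The phrase ``and in type~(c) automatically'' is exactly where this is swept under the rug: nothing in your induction hypothesis excludes such oscillating sequences.

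The repair is easy: strengthen type~(c) to ``entries eventually standard and the sequence diverges to $+\infty$'' (for every $M$ there is $N$ with $a_n\geq M$ for all $n\geq N$). This is still satisfied by the base case $r\equiv x$, is preserved by $s$, by $p$ (since $p(a_n)\geq a_n-1$ once $a_n\geq 1$), by addition, and now also by multiplication in every subcase, since a divergent sequence is eventually $\geq 1$ and a product of divergent sequences (or of a divergent sequence with a positive constant) diverges. With this change your argument goes through and gives a correct, self-contained alternative to the paper's proof, which instead distinguishes whether $x$ occurs in neither, exactly one, or both of $t_1,t_2$, and invokes Lemma~\ref{lemma:term_polynomial} only in the one-sided case.
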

\begin{proof}
1.\  If $x$ does not occur in $A$, then
 $\N_\infty \vDash A(0,\overline{z}) \leftrightarrow A(\infty,\overline{z})$ and hence
 $\N_\infty \vDash \forall x : A(x,\overline{m})$.

2.\ If $x$ occurs on exactly one side of $A$, say $t_1$, then
 $a \mapsto t_2^{\N_\infty}(a,\overline{m})$ is constant on $\N_\infty$.
We distinguish two cases.

2a.\ If $a \mapsto t_1^{\N_\infty}(a,\overline{m})$ is constant on $\N_\infty$, then
 $\N_\infty \vDash \forall x \forall x' (A(x,\overline{m}) \leftrightarrow A(x',\overline{m}))$
 and thus $\N_\infty \vDash \forall x : A(x,\overline{m})$.

2b.\ If $a \mapsto t_1^{\N_\infty}(a,\overline{m})$ is not contant on $\N_\infty$, then $x$
 occurs in $t_1$ and there is no $i\in \{ 1,\ldots,k\}$ with $m_i = \infty$ s.t.\ $z_i$
 occurs in $t_1$.
So, by Lemma~\ref{lemma:term_polynomial}, there is an $N\in \N$ and a non-constant polynomial
 $q \in \Z[x]$ s.t.\ $\N_\infty t_1(n,\overline{m}) = q(n)$ for all $n \geq N$.
In particular, there is some $n\in \N$
 s.t.\ $\N_\infty \nvDash t_1(n,\overline{m}) = t_2(\overline{m})$.

3.\ If $x$ occurs on both sides, then, by construction of the model,
 $\N_\infty \vDash t_1(\infty,\overline{z}) = \infty = t_2(\infty,\overline{z})$, i.e.,
 $\N_\infty \vDash \forall \overline{z} \forall x: A(x,\overline{z})$.
\end{proof}

\begin{lemma}
\label{lemma:arithmetics_ninf_iatom}
    For any reduct $\N_\infty^*$ of $\N_\infty$, whose language contains $0$ and $s$, we have $\N_\infty^* \vDash \mathrm{IAtom}$.
\end{lemma}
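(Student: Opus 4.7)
The plan is to show that any instance $\mathbf{I}(A(x,\overline{z}))$ of $\mathrm{IAtom}$ over the reduct's language $\sL' \subseteq \sL$ holds in $\N_\infty^*$ by bootstrapping from meta-level induction on $\N$ to the full domain $\N \cup \{\infty\}$ via Lemma~\ref{lemma:arithmetics_ninf_equations}.

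First, I would fix an atom $A(x,\overline{z})$ over $\sL'$ and a tuple of parameters $\overline{m} \in \N_\infty$, and assume $\N_\infty^* \vDash \hyperref[ax:arithmetics_lhs]{\mathbf{LHS}(A(x,\overline{z}))}[\overline{m}]$. Since $\N_\infty^*$ is a reduct of $\N_\infty$, every $\sL'$-term evaluates identically in both structures, and hence $A$ has the same truth value in $\N_\infty^*$ and in $\N_\infty$ under every assignment. Translating the assumption up to $\N_\infty$ and then running induction in the meta-theory on $n \in \N$ (using the base case $A(0,\overline{m})$ and $n$ applications of the step given by the assumed \hyperref[ax:arithmetics_lhs]{$\mathbf{LHS}$}) yields $\N_\infty \vDash A((s^n 0)^{\N_\infty}, \overline{m})$ for every $n \in \N$.

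Next, I would invoke Lemma~\ref{lemma:arithmetics_ninf_equations}: the atom $A$ is an $\sL'$-atom and therefore also an $\sL$-atom, and the parameters $\overline{m}$ are allowed to range over $\N_\infty$, so the lemma applies and produces $\N_\infty \vDash \forall x : A(x,\overline{m})$. Translating back down to the reduct gives $\N_\infty^* \vDash \forall x : A(x,\overline{m})$, and since $\overline{m}$ was arbitrary, $\N_\infty^* \vDash \mathbf{I}(A(x,\overline{z}))$.

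There is no real obstacle here: once one notices that Lemma~\ref{lemma:arithmetics_ninf_equations} is precisely tailored to close the gap between the standard part of $\N_\infty$ and the extra element $\infty$ for atomic formulas, the argument reduces to a verification that atomic truth is preserved between $\N_\infty$ and its reducts, which is immediate from the definition of a reduct.
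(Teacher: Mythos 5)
Your proposal is correct and follows essentially the same route as the paper: reduce to the full model $\N_\infty$ (the paper's ``w.l.o.g.''), obtain $A$ on all standard elements by meta-level induction from the assumed $\mathbf{LHS}$, and then apply Lemma~\ref{lemma:arithmetics_ninf_equations} to extend to $\infty$. You merely spell out the reduct bookkeeping that the paper compresses into one line.
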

\begin{proof}
    W.l.o.g.\ assume that $\N_\infty^* = \N_\infty$. The claim follows directly from Lemma~\ref{lemma:arithmetics_ninf_equations} as $\N_\infty \vDash A(\underline{n})$ for any $n \in \N$ if $\N_\infty \vDash A(0) \land A(x)\rightarrow A(sx)$.
\end{proof}
\begin{lemma}
\label{lemma:arithmetics_ninf_iliteral}
    For any reduct $\N_\infty^*$ of $\N_\infty$, whose language contains $0$ and $s$,
    we have $\N_\infty^* \nvDash \mathrm{ILiteral}$.
\end{lemma}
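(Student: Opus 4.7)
The plan is to exhibit a single literal whose induction axiom is falsified by $\N_\infty$ (and hence by any reduct containing $0$ and $s$, since the literal in question uses only these symbols and the equality predicate). The natural candidate comes from the proof of Theorem~\ref{theorem:general_empty_theory_iliteral_sur}: the literal $L(x,z) \equiv x \neq z$, instantiating the parameter $z$ with $\infty$.

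I would first verify $\hyperref[ax:arithmetics_lhs]{\mathbf{LHS}(L(x,\infty))}$ in $\N_\infty$. The base instance $0 \neq \infty$ is immediate from the construction of the domain $\N \cup \{\infty\}$. For the successor step, I observe that if $a \in \N_\infty$ satisfies $a \neq \infty$, then $a$ is a standard element, and by the canonical interpretation of $s$ on the standard part, $s^{\N_\infty}(a) \in \N$, so $s^{\N_\infty}(a) \neq \infty$. Next, I would observe that $\N_\infty \nvDash \forall x : L(x,\infty)$, since instantiating $x$ with $\infty$ gives $\infty \neq \infty$, which is false. Hence $\N_\infty \nvDash \hyperref[ax:arithmetics_induction]{\mathbf{I}(L(x,z))}$, so $\N_\infty \nvDash \mathrm{ILiteral}$.

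Finally, for an arbitrary reduct $\N_\infty^*$ whose language contains $0$ and $s$, the formula $L(x,z) \equiv x \neq z$ remains a literal in that language (equality is always available), and the argument above uses only $0$, $s$, and $=$. Therefore $\N_\infty^* \nvDash \hyperref[ax:arithmetics_induction]{\mathbf{I}(L(x,z))}$ as well.

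There is no real obstacle here. One subtle point worth noting is that one cannot simply invoke Theorem~\ref{theorem:general_empty_theory_iliteral_sur} together with a failure of $\hyperref[ax:arithmetics_sur]{\mathrm{SUR}}$, because $\hyperref[ax:arithmetics_sur]{\mathrm{SUR}}$ actually \emph{does} hold in $\N_\infty$ (owing to $s(\infty) = \infty$). The counterexample must therefore be produced directly, and the correct reading of Theorem~\ref{theorem:general_empty_theory_iliteral_sur} is that its proof supplies a recipe for such a counterexample as soon as some non-zero element fails to be reachable from $0$ by finitely many applications of $s$, which is exactly the role played by $\infty$ in $\N_\infty$.
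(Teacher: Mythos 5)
Your proposal is correct and is essentially the paper's own proof: the same literal $L(x,z) \equiv x \neq z$ with $z$ instantiated to $\infty$, the same verification of $\hyperref[ax:arithmetics_lhs]{\mathbf{LHS}(L(x,\infty))}$ and failure of $\forall x : L(x,\infty)$ at $x = \infty$, and the same observation that the argument is reduct-independent. The closing remark about $\hyperref[ax:arithmetics_sur]{\mathrm{SUR}}$ holding in $\N_\infty$ is a correct and worthwhile aside, but does not change the argument.
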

\begin{proof}
    Consider the literal $L(x,z) \equiv x \neq z$. Clearly, $\N_\infty^* \vDash L(0,\infty) \land L(x,\infty) \rightarrow L(sx, \infty)$, but $\N_\infty^*\nvDash \forall x : L(x, \infty)$. Note that this works regardless of the concrete reduct, as $L$ uses no symbols of the language.
\end{proof}
\begin{theorem}
\label{theorem:arithmetics_ninf_main}
    Let $T' \subseteq T$ be some theory over the language $\sL' \subseteq \sL$. Then $T' + \mathrm{IAtom} \nvdash \mathrm{ILiteral}$
\end{theorem}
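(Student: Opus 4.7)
The plan is to refute provability semantically: find a single model of $T' + \mathrm{IAtom}$ in which some instance of $\mathrm{ILiteral}$ fails, and then appeal to soundness. Given the machinery just developed, the natural candidate is the $\sL'$-reduct $\N_\infty^*$ of the model $\N_\infty$.

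First I would check that $\N_\infty^* \vDash T'$. This is immediate from Lemma \ref{lemma:arithmetics_ninf_model}: since $\N_\infty \vDash T$ and every sentence in $T' \subseteq T$ is an $\sL'$-sentence, its truth is preserved when passing to the $\sL'$-reduct. Next, Lemma \ref{lemma:arithmetics_ninf_iatom} gives $\N_\infty^* \vDash \mathrm{IAtom}$ directly, for any reduct containing $0$ and $s$ (which is implicit since the induction scheme is formulated in terms of these symbols). Finally, Lemma \ref{lemma:arithmetics_ninf_iliteral} exhibits a concrete failing instance: taking $L(x,z) \equiv x \neq z$ with the parameter $z$ set to the nonstandard element $\infty \in \N_\infty^*$, we have $\mathbf{LHS}(L(x,\infty))$ satisfied but not $\forall x\, L(x,\infty)$.

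Combining these three observations, $\N_\infty^*$ is a model of $T' + \mathrm{IAtom}$ in which the specific instance $\mathbf{I}(x \neq z)$ of $\mathrm{ILiteral}$ fails; by soundness, $T' + \mathrm{IAtom} \nvdash \mathrm{ILiteral}$. There is essentially no obstacle, since the theorem is a corollary of the three preceding lemmas. The only subtlety worth flagging is that the chosen counterexample literal $x \neq z$ uses no function or predicate symbols beyond equality, so it is a well-formed formula in every sublanguage $\sL' \supseteq \{0,s\}$; this is exactly what makes a single model $\N_\infty^*$ work uniformly for all choices of $T'$ and $\sL'$.
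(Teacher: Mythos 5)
Your proposal is correct and follows exactly the paper's own argument: take the $\sL'$-reduct $\N_\infty^*$ of $\N_\infty$, invoke Lemma~\ref{lemma:arithmetics_ninf_model} for $T'$, Lemma~\ref{lemma:arithmetics_ninf_iatom} for $\mathrm{IAtom}$, and Lemma~\ref{lemma:arithmetics_ninf_iliteral} for the failing literal instance $x \neq z$. The additional remark about the counterexample literal using no non-logical symbols beyond equality is a nice explicit touch, but the substance is the same.
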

\begin{proof}
    By Lemma~\ref{lemma:arithmetics_ninf_model} the appropriate reduct $\N_\infty^*$ of $\N_\infty$ is a model of $T'$. By Lemma~\ref{lemma:arithmetics_ninf_iatom} $\N_\infty^*$ is a model of $\mathrm{IAtom}$, but by Lemma~\ref{lemma:arithmetics_ninf_iliteral},  $\N_\infty^*$ is not a model of $\mathrm{ILiteral}$.
\end{proof}

\subsection{\texorpdfstring{The model $\N_{a,b}$}{The model Nab}}
In this section, we stick with the language $\sL = \{0,s,p, +, \cdot\}$ and the theory $T = \{\hyperref[ax:arithmetics_a1]{A1}, \dots, \hyperref[ax:arithmetics_a7]{A7}\}$. We will define another very common non-standard model of $T$. We will use this model to separate atomic induction from the base theory in some cases.

\begin{definition}
\label{definition:arithmetics_nab}
    The model $\N_{a,b}$ is constructed in the following way: The domain is given by $\N \cup \{a,b\}$. The symbols are interpreted as follows: 
    \begin{itemize}
        \item $0$ is interpreted as $0$
        \item For every standard element $n$, $s^{\N_{a,b}}n = n+1$, $s^{\N_{a,b}}a = a$ and $s^{\N_{a,b}}b = b$
        \item For every standard element $n \neq 0$, $p^{\N_{a,b}}n = n-1$, $p^{\N_{a,b}}0 = 0$, $p^{\N_{a,b}}a = a$, $p^{\N_{a,b}}b = b$ 
        \item $+$ and $\cdot$ are interpreted according to the tables below
    \end{itemize} 
    \begin{table}[H]
    \begin{center}
        \begin{tabular}{l|llll|ll}
            + & 0 & 1 & 2 & \dots & a & b \\ \hline
            0 & 0 & 1 & 2 & \dots & b & a  \\
            1 & 1 & 2 & 3 & \dots & b & a  \\
            2 & 2 & 3 & 4 & \dots & b & a  \\
            $\vdots$ & $\vdots$ & $\vdots$ & $\vdots$ & $\ddots$ & $\vdots$ & $\vdots$   \\ \hline
            a & a & a & a & \dots & a & a   \\
            b & b & b & b & \dots & b & b 
        \end{tabular}
        \hspace{2cm}
        \begin{tabular}{l|llll|ll}
            $\cdot$ & 0 & 1 & 2 & \dots & a & b \\ \hline
            0 & 0 & 0 & 0 & \dots & b & a  \\
            1 & 0 & 1 & 2 & \dots & b & a  \\
            2 & 0 & 2 & 4 & \dots & b & a  \\
            $\vdots$ & $\vdots$ & $\vdots$ & $\vdots$ & $\ddots$ & $\vdots$ & $\vdots$   \\ \hline
            a & 0 & b & b & \dots & a & a   \\
            b & 0 & a & a & \dots & b & b
        \end{tabular}
    \end{center}
    \end{table}
\end{definition}
\begin{lemma}
    $\N_{a,b} \vDash T$
\end{lemma}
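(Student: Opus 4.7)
The plan is to verify each of the seven axioms A1--A7 directly from Definition~\ref{definition:arithmetics_nab}, case-splitting on whether the elements at hand are standard (in $\N$) or non-standard (in $\{a,b\}$). Since the statement is essentially a sanity check of the construction, the proof in the paper will most likely just say "by construction" as in Lemma~\ref{lemma:arithmetics_ninf_model}; the task is really only to convince oneself the tables were chosen consistently.

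The first block of axioms is immediate by inspection. For A1, $s^{\N_{a,b}} n = n+1 \neq 0$ for standard $n$, and $s^{\N_{a,b}} a = a \neq 0$, $s^{\N_{a,b}} b = b \neq 0$. A2 is explicit in the definition; A4 and A6 can be read off the first columns of the addition and multiplication tables. For A3, one checks that $p(s n) = p(n+1) = n$ on standard $n$, while for $x \in \{a,b\}$ we have $s x = x$ and $p x = x$, so $p(s x) = x$.

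The two remaining axioms A5 ($x+s y = s(x+y)$) and A7 ($x \cdot s y = x \cdot y + x$) require a case distinction on both arguments. When $x$ and $y$ are both standard, they reduce to ordinary arithmetic on $\N$. When at least one argument lies in $\{a,b\}$, the point is that both sides evaluate to the same element of $\{a,b\}$, a collapse forced by $s$ fixing $a$ and $b$: any equation of the form $u = s(v)$ with $v \in \{a,b\}$ becomes $u = v$. Concretely, for A5 with $x$ standard and $y = a$ one has $x + sa = x + a = b = s(b) = s(x+a)$, and the remaining sub-cases are analogous, using that $a + \cdot$ is constantly $a$ and $b + \cdot$ is constantly $b$.

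The only mildly delicate point, and thus the "main obstacle" insofar as there is one, is the \emph{asymmetry} of the multiplication table: one has $0 \cdot a = b$ and $a \cdot 0 = 0$, and similarly $0 \cdot b = a$ and $b \cdot 0 = 0$. In verifying A7 one must therefore distinguish whether $x$ or $y$ is the zero argument; for example with $x = a, y = 0$ one has $a \cdot s 0 = a \cdot 1 = b$ and $a \cdot 0 + a = 0 + a = b$ (using $0 + a = b$ from the addition table), which matches. All remaining sub-cases of A7 close in the same way, and no conceptual obstacle arises.
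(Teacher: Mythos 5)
Your proposal is correct and matches the paper's approach: the paper's proof is simply ``Straightforward verification,'' and your case analysis (including the check of A5 and A7 at non-standard arguments and the correct handling of the asymmetry $0 \cdot a = b$ versus $a \cdot 0 = 0$) is exactly the verification being alluded to.
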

\begin{proof}
Straightforward verification.
\end{proof}
\begin{lemma}
\label{lemma:arithmetics_nab_plus_non_comm}
$\N_{a,b} \nvDash x+y = y+x$
\end{lemma}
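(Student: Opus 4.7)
The plan is to exhibit an explicit pair of elements $u,v \in \N_{a,b}$ for which the interpretations of $u+v$ and $v+u$ differ. Since this is just a finite computation in the model, no induction or structural argument is required — the work is done entirely by reading off two entries of the addition table in Definition~\ref{definition:arithmetics_nab}.

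Concretely, I would take $u = a$ and $v = 0$. Axiom~\ref{ax:arithmetics_a4} forces $a +^{\N_{a,b}} 0 = a$, which is also what the $a$-row, $0$-column entry of the addition table records. On the other hand, the $0$-row, $a$-column entry reads $0 +^{\N_{a,b}} a = b$. Since $a$ and $b$ are distinct elements of the domain $\N \cup \{a,b\}$, this gives $a +^{\N_{a,b}} 0 \neq 0 +^{\N_{a,b}} a$, so $\N_{a,b}$ witnesses a failure of the commutativity formula $x + y = y + x$.

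There is essentially no obstacle here: the only thing to check is that the two table lookups really do land on different symbols of the domain, which is immediate from the definition. I would present the argument in two lines, pointing out in passing that the asymmetry between the $0$-row and the $0$-column of the addition table is exactly what the axioms~\ref{ax:arithmetics_a4} and~\ref{ax:arithmetics_a5} permit, since these axioms only pin down $x + 0$ and $x + sy$ but leave $0 + x$ for nonstandard $x$ free.
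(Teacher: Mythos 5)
Your proposal is correct and uses exactly the same witness as the paper's one-line proof, namely $a + 0 = a \neq b = 0 + a$, read off directly from the addition table. Nothing further is needed.
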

\begin{proof}
$\N_{a,b}\vDash a+0 = a \neq b = 0+a$.
\end{proof}

\section{Zero and Successor only}

\label{section:arithmetics_t0}

For this section we fix the language $\sL = \{0,s\}$.
\begin{definition}
    We define two very basic theories:
    \begin{itemize}
        \item $T_0 = \emptyset$
        \item $T_1 = \{\hyperref[ax:arithmetics_a1]{A1}\}$
    \end{itemize}
\end{definition}
Additionally, we define the following auxiliary axioms:
\begin{flalign}
    & s^n0 = s^{n+m}0 \rightarrow \forall x \bigvee_{k=0}^{n+m-1} x=s^k0 \text{ for any } n,m \in \N \text{ and } m \geq 1 \label{ax:arithmetics_bnm}\tag*{$\mathbf{B_{n,m}}$} && 
\end{flalign}
Informally these axioms can be formulated as the fact that if a model $\M$ of $\{\hyperref[ax:arithmetics_bnm]{B_{n,m}} \mid n,m \in \N, m \geq 1\}$ contains a \textit{lollipop} starting with $0$, then every element is a (transitive) successor of $0$.
In \cite{Shepherdson65Nonstandard} different axioms were used instead of $\hyperref[ax:arithmetics_bnm]{B_{n,m}}$: 
\begin{flalign}
    & s^n0 = s^{m+1}0 \rightarrow \forall x \bigvee_{k=0}^{m} x=s^k0 \text{ for any } n,m \in \N \text{ and } n \leq m \label{ax:arithmetics_bnm_alt}\tag*{$\mathbf{B_{n,m}'}$} && 
\end{flalign}
\begin{observation}
For each $n,m \in \N$ with $m \geq 1$, $\hyperref[ax:arithmetics_bnm]{B_{n,m}}$ is
 $\hyperref[ax:arithmetics_bnm_alt]{B_{n,n+m-1}'}$.
On the other hand, for each $n,m \in \N$ with $n \leq m$,
 $\hyperref[ax:arithmetics_bnm_alt]{B_{n,m}'}$ is $\hyperref[ax:arithmetics_bnm]{B_{n,m-n+1}}$.
\end{observation}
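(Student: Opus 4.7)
The plan is to verify both assertions by direct substitution into the two schemes and checking that the side-conditions line up. Since no model-theoretic content is involved, the whole observation reduces to matching indices.

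For the first direction, I start with $\mathbf{B_{n,m}}$, which by definition (valid since $m \geq 1$) is the formula
\[
 s^n 0 = s^{n+m}0 \;\rightarrow\; \forall x \bigvee_{k=0}^{n+m-1} x = s^k 0.
\]
I set $m' := n+m-1$. Then $n+m = m'+1$ and the upper summation bound $n+m-1$ equals $m'$, so the formula becomes
\[
 s^n 0 = s^{m'+1}0 \;\rightarrow\; \forall x \bigvee_{k=0}^{m'} x = s^k 0,
\]
which is literally the statement of $\mathbf{B_{n,m'}'}$. To conclude that this is a legitimate instance of the $B'$-scheme I must check $n \leq m'$; but $n \leq n + m - 1$ is equivalent to $m \geq 1$, which is the hypothesis of the first half. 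So $\mathbf{B_{n,m}} = \mathbf{B_{n,n+m-1}'}$ as claimed.

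For the second direction, I start with $\mathbf{B_{n,m}'}$ (valid since $n \leq m$):
\[
 s^n 0 = s^{m+1}0 \;\rightarrow\; \forall x \bigvee_{k=0}^{m} x = s^k 0,
\]
and set $m'' := m-n+1$. Then $n + m'' = m+1$ and $n+m''-1 = m$, which turns the formula into
\[
 s^n 0 = s^{n+m''}0 \;\rightarrow\; \forall x \bigvee_{k=0}^{n+m''-1} x = s^k 0,
\]
i.e., exactly $\mathbf{B_{n,m''}}$. The scheme $B$ requires $m'' \geq 1$, which unfolds to $m-n+1 \geq 1$, i.e., $n \leq m$, again exactly the hypothesis assumed. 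Hence $\mathbf{B_{n,m}'} = \mathbf{B_{n,m-n+1}}$.

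There is no real obstacle here; the only thing to be careful about is not to confuse the two uses of the index $m$ in the two halves of the statement and to verify that the side-conditions $m \geq 1$ and $n \leq m$ correspond to each other under the re-indexing. A single sentence noting that the two rewrites $m \mapsto n+m-1$ and $m \mapsto m-n+1$ are inverse to each other (when restricted to the respective index sets) also makes clear that the observation sets up a bijection between the two axiom schemes, so in particular the two schemes axiomatise the same set of sentences.
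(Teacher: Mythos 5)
Your verification is correct and is exactly the index-matching argument the paper intends: the paper states this as an observation without proof, and the substitutions $m \mapsto n+m-1$ and $m \mapsto m-n+1$, together with the check that the side-conditions $m \geq 1$ and $n \leq m$ correspond, are all that is needed. Your closing remark that the two re-indexings are mutually inverse, so the two schemes coincide as sets of sentences, is also accurate and is precisely how the observation is used in Theorem~\ref{theorem:arithmetics_t0_shepherdson}.
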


\begin{theorem}
\label{theorem:arithmetics_t0_shepherdson}
    In \cite{Shepherdson65Nonstandard}, it was shown that:
    \begin{itemize}
        \item $T_0 + \mathrm{IOpen} \approx T_0 + \{\hyperref[ax:arithmetics_bnm_alt]{B_{n,m}'} \mid n,m \in \N, n \leq m\}$
        \item $T_1 + \mathrm{IOpen} \approx T_1 + \{\hyperref[ax:arithmetics_bnm_alt]{B_{n,m}'} \mid n,m \in \N, n \leq m\}$
    \end{itemize}
    From the observation above it follows that:
    \begin{itemize}
        \item $T_0 + \mathrm{IOpen} \approx T_0 + \{\hyperref[ax:arithmetics_bnm]{B_{n,m}} \mid n,m \in \N, n \leq m\}$
        \item $T_1 + \mathrm{IOpen} \approx T_1 + \{\hyperref[ax:arithmetics_bnm]{B_{n,m}} \mid n,m \in \N, n \leq m\}$
    \end{itemize}
\end{theorem}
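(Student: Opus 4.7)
The plan is to split the proof into its two halves, matching the structure of the statement itself. For the first pair of equivalences, involving the scheme $\{B'_{n,m} \mid n \leq m\}$, I would appeal directly to Shepherdson's argument in~\cite{Shepherdson65Nonstandard}: the statement of the theorem merely asserts that these equivalences were established there, so the task reduces to locating the precise result and citing it. No new reasoning is required at this step.

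For the second pair of equivalences, involving the scheme $\{B_{n,m} \mid n \leq m\}$, I would rely entirely on the observation immediately preceding the theorem. That observation records that every axiom $B_{n,m}$ (with $m \geq 1$) is literally, as a formula, the axiom $B'_{n, n+m-1}$, and that every axiom $B'_{n,m}$ (with $n \leq m$) is literally the axiom $B_{n, m-n+1}$. Reparametrising via $k = n + m - 1$ gives a bijection between the index sets $\{(n,m) : m \geq 1\}$ and $\{(n,k) : n \leq k\}$ under which the corresponding axioms coincide; consequently the two sets of formulas $\{B_{n,m} \mid m \geq 1\}$ and $\{B'_{n,m} \mid n \leq m\}$ are the \emph{same} set. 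Since $T_0 + \mathrm{IOpen}$ and $T_1 + \mathrm{IOpen}$ are already known (from the first pair) to be equivalent to the $B'$-presentations, they are equivalent to the corresponding $B$-presentations as well.

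I do not anticipate any substantive obstacle here. All the genuine deductive content is carried by Shepherdson's argument, and what remains is purely combinatorial: one simply needs to execute the index bookkeeping of the reparametrisation with a little care, and then read the conclusion off.
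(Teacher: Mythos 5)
Your proposal is correct and takes essentially the same route as the paper: the paper offers no separate proof, treating the first pair of equivalences as a citation of Shepherdson and deriving the second pair immediately from the preceding Observation, exactly as you do. The only point worth flagging is that your (correct) identification produces the set $\{B_{n,m} \mid m \geq 1\}$ whereas the theorem writes the index condition $n \leq m$ for the $B$-scheme; under the bijection $(n,m) \mapsto (n, n+m-1)$ these describe the same set of formulas, so this is a harmless bookkeeping discrepancy in the statement rather than a gap in your argument.
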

We can strengthen this by the following theorem:
\begin{theorem}
    For $i \in \{0,1\}$ we have:
    \begin{align*}
        T_i &\precneq T_i + \mathrm{IAtom} \\
        &\precneq T_i + \mathrm{ILiteral} \\
        &\precneq T_i + \mathrm{IDClause} \\
        &\approx T_i + \mathrm{IClause} \\
        &\approx T_i + \mathrm{IOpen} \approx T_i +  \{\hyperref[ax:arithmetics_bnm]{B_{n,m}} \mid n,m \in \N, n \leq m\}.
    \end{align*}
    This yields the following Hasse Diagram:
    \begin{center}
        \begin{tikzpicture}
            \node (Base) at (0,-2) {$T_i$};
            \node (IAtom) at (0,0) {$T_i + \mathrm{IAtom}$};
            \node (ILiteral) at (0,2) {$T_i + \mathrm{ILiteral}$};
            \node (IClause) at (0,4) {$T_i + \mathrm{IDClause} \approx T_i + \mathrm{IClause} \approx T_i + \mathrm{IOpen}$};
            \draw (Base) -- (IAtom) -- (ILiteral) -- (IClause);
        \end{tikzpicture}
    \end{center}
\end{theorem}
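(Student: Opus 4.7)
The inclusions implicit in the general Hasse diagram of Section~\ref{section:arithmetics_t0} together with Theorem~\ref{theorem:arithmetics_t0_shepherdson} reduce the theorem to two tasks: (a) deriving every $B_{n,m}$ both from $T_i+\mathrm{IClause}$ and from $T_i+\mathrm{IDClause}$; and (b) exhibiting, for each of the three strict inclusions, a countermodel witnessing the separation. I will address (a) first and then (b).

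For (a), in the clause direction I apply $\mathrm{I}(C)$ to $C(x)\equiv\bigvee_{k=0}^{n+m-1} x=s^k 0$ under the assumption $s^n 0=s^{n+m}0$. The base $C(0)$ is immediate and the step $C(x)\to C(sx)$ is by case distinction on the witnessing disjunct; the only nontrivial case is $x=s^{n+m-1}0$, where the hypothesis rewrites $sx$ as $s^n 0$, so that $C(sx)$ holds via the disjunct at $k=n$. In the dual clause direction I argue by contradiction: let $a$ satisfy $a\neq s^k 0$ for all $0\leq k<n+m$. First I use $\mathrm{IAtom}$ (contained in $\mathrm{IDClause}$) on the atom $s^n x=s^{n+m}x$, whose step is trivially true, to derive $\forall x(s^n x=s^{n+m}x)$. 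Then I apply $\mathrm{I}(D)$ to the dual clause $D(x)\equiv\bigwedge_{k=0}^{n+m-1} s^k x\neq a$: the base $D(0)$ holds by the choice of $a$, and the step reduces to the single obligation $s^{n+m}x\neq a$, which follows by rewriting $s^{n+m}x$ as $s^n x$ and invoking the conjunct $s^n x\neq a$ of $D(x)$. Instantiating $\forall x D(x)$ at $x=a$ yields the contradiction $a\neq a$.

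For (b) I use three countermodels. To separate $T_i$ from $T_i+\mathrm{IAtom}$ I take $\M_1=\{0,1,2,3\}$ with $s(0)=s(1)=1$, $s(2)=3$, $s(3)=2$: it models $T_1\supseteq T_0$ but fails the atom induction for $s^2 x=s0$, whose base and trivial step hold while the conclusion fails at $x=2$. To separate $T_i+\mathrm{IAtom}$ from $T_i+\mathrm{ILiteral}$ the $\{0,s\}$-reduct of $\N_\infty$ works: by Lemmas~\ref{lemma:arithmetics_ninf_model} and~\ref{lemma:arithmetics_ninf_iatom} it satisfies $T_i+\mathrm{IAtom}$, and by Lemma~\ref{lemma:arithmetics_ninf_iliteral} it fails $\mathrm{ILiteral}$. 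To separate $T_i+\mathrm{ILiteral}$ from $T_i+\mathrm{IDClause}$ I take $\M_2$ with universe $\{0,c_0,c_1,d_0,d_1\}$ and $s$-action $0\mapsto c_0$, $c_0\leftrightarrow c_1$, $d_0\leftrightarrow d_1$. Then $\M_2\vDash T_1$, and $B_{1,2}$ fails since $s0=s^3 0=c_0$ but $d_0\notin\{0,c_0,c_1\}$; hence $\M_2\nvDash\mathrm{IDClause}$ by part (a).

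The main obstacle will be verifying $\M_2\vDash\mathrm{ILiteral}$. The plan is a case distinction on the shape of a literal $L(x,\overline z)\equiv s^a u\bowtie s^b v$ with $u,v\in\{0,x\}\cup\{z_j\}$ and $\bowtie\in\{=,\neq\}$, exploiting that in $\M_2$ the function $s$ has no fixed points and $c_0$ is the unique element with multiple $s$-preimages. When $x$ occurs on both sides, both $\mathrm{LHS}(L)$ and $\forall x L$ reduce to the same parity condition on $a$ and $b$, so $\mathrm{I}(L)$ holds. When $x$ occurs on only one side, say as $s^a x\bowtie s^b v$: for $\bowtie$ being $=$, the step combined with the base forces $s^b v$ to be a fixed point of $s$, which is impossible, so $\mathrm{LHS}(L)$ is unsatisfiable; for $\bowtie$ being $\neq$, a parallel analysis shows that either the step fails (making $\mathrm{LHS}(L)$ false) or $s^b v=0$, in which case the conclusion $\forall x(s^a x\neq 0)$ holds in $\M_2$ because $0$ is not in the image of $s$. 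This argument is mechanical but must be carried out case by case.
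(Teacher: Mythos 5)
Your proposal is correct in structure and covers every inclusion and separation the theorem requires, but it takes a genuinely different route on the most delicate inclusion, namely $T_i + \mathrm{IDClause} \vdash \mathrm{IOpen}$. The paper proves this model-theoretically: it first shows (Lemma~\ref{lemma:arithmetics_t0_idclause_inf_pred}) that in any model of $T_0 + \mathrm{IDClause}$ every non-standard element has arbitrarily long chains of distinct $s$-predecessors, and then splits on whether the component of $0$ contains a cycle, using the atom $s^n x = s^{n+m}x$ to force finiteness of predecessor sets and conclude that no non-standard element can exist. You instead give a direct syntactic derivation of each $\hyperref[ax:arithmetics_bnm]{B_{n,m}}$ from $T_i + \mathrm{IDClause}$: assuming a counterexample $a$, you induct on the parametrized dual clause $\bigwedge_{k=0}^{n+m-1} s^k x \neq a$, discharging the one new conjunct $s^{n+m}x \neq a$ via the $\mathrm{IAtom}$-derived identity $\forall x\, (s^n x = s^{n+m}x)$, and get $a \neq a$. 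This is correct (a single literal is a dual clause, so $\mathrm{IAtom} \subseteq \mathrm{IDClause}$ is available), and it is arguably more elementary and more uniform than the paper's argument, since it dispenses with the auxiliary predecessor-chain lemma entirely; what it loses is the structural information about models of $T_0+\mathrm{IDClause}$ that the paper's lemma provides. The remaining pieces match the paper: $\mathrm{IClause} \vdash B_{n,m}$ by the same clause induction, the $\N_\infty$ reduct for $\mathrm{IAtom} \nvdash \mathrm{ILiteral}$, and your $\M_2$ is isomorphic to the paper's five-element model refuting $B_{1,2}$. Three small loose ends in your sketch should be tightened: the induction step for $s^2x = s0$ in $\M_1$ is not \emph{trivially} true (the antecedent holds at $x=0,1$ and must be checked there); in the $\M_2$ verification, the case of a negated atom $s^a x \neq e$ with $e = 0$ \emph{and} $a = 0$ is not covered by your dichotomy (there the conclusion fails, but so does the base $0 \neq 0$, so $\mathrm{I}(L)$ still holds); and for $x$ on both sides the relevant invariant is not parity alone but the three-way split $k=0$, $k$ odd, $k$ even $\geq 2$, since $s^0$ and $s^2$ agree off $0$ but differ at $0$.
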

This theorem will follow directly from the following lemmas.
\begin{remark}
    Note that $T_0$ and $T_1$ are defined over the same language with $T_0 \subseteq T_1$. Thus, by showing $T_1 \nvdash \varphi$ for some formula $\varphi$, we obtain that $T_0 \nvdash \varphi$ for free and vice-versa for $T_0 \vdash \varphi$.
\end{remark}

\begin{lemma}
\label{lemma:arithmetics_t0_bnm}
$T_0 +\mathrm{IClause} \vdash \hyperref[ax:arithmetics_bnm]{B_{n,m}}$ for any $n,m \in \N$
 with $m \geq 1$.
\end{lemma}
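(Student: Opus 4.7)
The plan is to take the conclusion of $B_{n,m}$ as the induction formula itself. Concretely, I would fix $n, m \in \N$ with $m \geq 1$, assume the hypothesis $s^n 0 = s^{n+m} 0$, and then consider the clause
\[
 C(x) \;\equiv\; \bigvee_{k=0}^{n+m-1} x = s^k 0,
\]
which lies in the scope of $\mathrm{IClause}$. The strategy is to verify $\mathrm{LHS}(C(x))$ under the hypothesis and then apply $\mathbf{I}(C)$ to obtain $\forall x\, C(x)$, after which discharging the hypothesis yields $B_{n,m}$.

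The base case $C(0)$ is immediate since the disjunct with $k=0$ is just $0 = 0$. For the inductive step, I would assume $C(x)$, fix a $k \in \{0,\ldots,n+m-1\}$ with $x = s^k 0$, and observe that $sx = s^{k+1} 0$. If $k+1 \leq n+m-1$, then $C(sx)$ holds directly by the $(k+1)$-th disjunct. The only interesting case is $k+1 = n+m$: here the assumed hypothesis $s^n 0 = s^{n+m} 0$ rewrites $sx$ to $s^n 0$, and since $m \geq 1$ we have $n \leq n+m-1$, so the $n$-th disjunct of $C(sx)$ witnesses it.

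Since $C$ is a disjunction of equations, $\mathbf{I}(C)$ is a member of $\mathrm{IClause}$, so applying it to the verified $\mathrm{LHS}(C(x))$ closes the argument. Note that no base axiom is used: the proof proceeds purely by induction on a carefully chosen clause, which is why we obtain the result over the empty theory $T_0$. There is no real obstacle here; the only thing to notice is that the conclusion of $B_{n,m}$ has exactly the syntactic shape of a clause, making it its own natural induction candidate.
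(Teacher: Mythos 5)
Your proposal is correct and matches the paper's own proof essentially step for step: both perform induction on the clause $C(x) \equiv \bigvee_{k=0}^{n+m-1} x = s^k 0$ under the assumed hypothesis $s^n 0 = s^{n+m} 0$, with the same case split on whether $k+1 \leq n+m-1$ or $k+1 = n+m$.
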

\begin{proof}
Fix $n,m\in\N$ with $m \geq 1$.
We work in $T_0 + \mathrm{IClause}$.
Assume that $s^n 0 = s^{n+m} 0$.
We make an induction on the clause $C(x) \equiv \bigvee_{k=0}^{n+m-1} x = s^k 0$.
Clearly $C(0)$ is logically valid.
Now, assume $C(x)$.
Then there is a $k \leq n + m + 1$ s.t.\ $x = s^k0$.
If $k < n+m-1$, then $k+1 \leq n+m-1$ and $sx=s^{k+1}0$ and thus $C(sx)$.
If $k = n+m-1$, then $sx = s^{k+1}0=s^{n+m}0 =s^n0$ by the assumption.
Since $0 \leq n \leq n+m-1$ we have $C(sx)$.
\end{proof}

\begin{lemma}
    $T_1 \nvdash \mathrm{IAtom}$
\end{lemma}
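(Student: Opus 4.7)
The plan is to construct a finite $\{0,s\}$-model $\mathcal{M}$ of $T_1$ in which some instance of $\mathrm{IAtom}$ fails; by soundness this yields $T_1 \nvdash \mathrm{IAtom}$. Each atomic induction axiom has the shape ``base $\wedge$ step $\rightarrow$ universal conclusion'', so the idea is to choose an atom whose step is \emph{logically} valid (and hence automatic in every model) and whose base is easy to satisfy inside the forward $s$-orbit of $0$, while arranging the model so that some element outside that orbit refutes the conclusion.

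My candidate atom is $A(x) \equiv sx = ssx$. The step $\forall x(sx = ssx \rightarrow ssx = sssx)$ is a tautology of first-order logic with equality, since it follows from the premise by applying $s$ to both sides; hence it holds in every $\{0,s\}$-structure whatsoever. The base $A(0)$ asserts that $s(0)$ is a fixed point of $s$, which is trivial to arrange. The conclusion $\forall x(sx = ssx)$ fails at any element sitting on a non-trivial cycle of $s$ disjoint from the forward orbit of $0$. Together these observations suggest a model whose $s$-graph is the disjoint union of a short lollipop based at $0$ whose only cycle is a fixed point, plus a $2$-cycle unreachable from $0$.

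Concretely, I take $\mathcal{M}$ with domain $\{0,1,a,b\}$ and put $s(0) = 1$, $s(1) = 1$, $s(a) = b$, $s(b) = a$. Since $s^{\mathcal{M}}$ never outputs $0$, $\mathcal{M} \models \hyperref[ax:arithmetics_a1]{A1}$, so $\mathcal{M} \models T_1$. The base $A(0)$ reads $1 = s(1) = 1$, which holds; the step is valid as noted; but the conclusion fails at $x = a$, because $s(a) = b \neq a = s(s(a))$. Hence $\mathcal{M} \not\models \mathbf{I}(sx = ssx)$ while $\mathcal{M} \models T_1$, which gives $T_1 \nvdash \mathrm{IAtom}$. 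The only non-routine step is the initial design choice: one has to notice that for this particular atom the inductive step is logically valid, after which the required counter-model is simply the smallest $s$-graph realising the structural pattern described above.
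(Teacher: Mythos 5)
Your proof is correct and takes essentially the same approach as the paper: a finite model of $T_1$ consisting of a lollipop at $0$ plus a disjoint $s$-cycle, refuting induction on an atom of the form $sx = s^k x$ (the paper uses $sx = s^3x$ with two $3$-cycles on domain $\{0,1,2,a,b,c\}$; you use $sx = ssx$ with a fixed point and a $2$-cycle). Your observation that the inductive step for this atom is logically valid is a pleasant minor simplification, but the argument is the same.
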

\begin{proof}
    Consider the following model $\M$: The domain is given by $\{0,1,2,a,b,c\}$ and the symbols are interpreted in the following way:
    \begin{itemize}
        \item $0^\M = 0$
        \item $s^\M 0 = 1, s^\M 1 = 2, s^\M 2 = 1$
        \item $s^\M a = b, s^\M b = c, s^\M c = a$
    \end{itemize}
    Since $0$ has no predecessor in this model, all axioms of $T_1$ hold. Consider the atom $A(x) \equiv sx = s^3x$. Then $A$ holds for the elements $0,1,2$, but not for $a,b,c$. In particular, $\M \vDash \hyperref[ax:arithmetics_lhs]{\mathbf{LHS}(A)}$, but $\M \nvDash \forall x : A(x)$. Thus, $\M \nvDash \hyperref[ax:arithmetics_induction]{\mathbf{I}(A)}$.
\end{proof}
\begin{lemma}
    $T_1 + \mathrm{IAtom} \nvdash \mathrm{ILiteral}$
\end{lemma}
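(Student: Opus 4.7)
The plan is to exhibit a model $\M$ of $T_1 + \mathrm{IAtom}$ in which the axiom $\mathrm{SUR}$ fails. By Theorem~\ref{theorem:general_empty_theory_iliteral_sur}, this would immediately give $\M \nvDash \mathrm{ILiteral}$ and hence $T_1 + \mathrm{IAtom} \nvdash \mathrm{ILiteral}$.

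The candidate model is $\M = \N \sqcup \{c_0, c_1, c_2, \ldots\}$, where $0^\M = 0$, $s^\M$ acts as the usual successor on $\N$, and $s^\M(c_i) = c_{i+1}$. Since $c_0 \neq 0$ and $c_0$ is not in the image of $s^\M$, SUR fails witnessed by $c_0$. Axiom $\mathrm{A1}$ also holds, because $0 \notin \mathrm{Im}(s^\M)$, so $\M \vDash T_1$. The crucial structural features I will use are that $s^\M$ is injective and that $\N$ and $\{c_i : i \in \N\}$ are each closed under $s^\M$, forming two disjoint successor chains with no interaction.

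The main work is then to verify $\M \vDash \mathrm{IAtom}$. Any atom over $\{0,s\}$ has the form $s^{k_1} u_1 = s^{k_2} u_2$ where each $u_j$ is either $0$, the induction variable $x$, or a parameter $z_i$. Fixing parameter values $\bar m \in \M$, I would case-split by how many of $u_1, u_2$ are equal to $x$. When neither is, the atom does not depend on $x$ and induction is trivial. When both sides involve $x$, injectivity of $s^\M$ and the absence of fixed points of $s^\M$ force the atom to reduce to $k_1 = k_2$, so it holds either everywhere (trivial) or nowhere (so $A(0)$ fails and $\mathrm{LHS}(A)$ fails vacuously). When exactly one side involves $x$, say $A(x) \equiv s^{k_1} x = s^{k_2} m$ for some fixed $m \in \M$, injectivity combined with closedness of components shows that $A(x)$ has at most one solution in $\M$; hence if $A(0)$ holds then $A(s0)$ must fail, breaking the step case of induction.

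The main technical obstacle is the mixed case, where the fixed element $m$ may be non-standard, i.e., some $c_i$. Here one has to observe that $s^{k_1}(x)$ always stays in the component of $x$, so that for the equation to be satisfiable the parameter $m$ must lie in the same component as $x$; uniqueness of the solution then follows from injectivity on that component. Once this component-analysis is in place, every remaining step is routine.
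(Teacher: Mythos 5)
Your proof is correct, but it takes a genuinely different route from the paper. The paper disposes of this lemma in one line by appealing to Theorem~\ref{theorem:arithmetics_ninf_main}, whose countermodel is $\N_\infty$ (the reduct to $\{0,s\}$ of $\N$ plus a single point $\infty$ with $s\infty=\infty$); there $\mathrm{SUR}$ actually \emph{holds}, and $\mathrm{ILiteral}$ is refuted directly by the literal $x\neq\infty$ (Lemma~\ref{lemma:arithmetics_ninf_iliteral}), while $\mathrm{IAtom}$ is verified via the term-evaluation machinery of Lemmas~\ref{lemma:term_polynomial} and~\ref{lemma:arithmetics_ninf_equations}. You instead take two disjoint acyclic successor chains, refute $\mathrm{ILiteral}$ by violating $\mathrm{SUR}$ and invoking Theorem~\ref{theorem:general_empty_theory_iliteral_sur}, and verify $\mathrm{IAtom}$ by a direct, elementary case analysis on atoms $s^{k_1}u_1=s^{k_2}u_2$ exploiting injectivity and acyclicity of $s$. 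Both mechanisms ultimately rest on the literal $x\neq z$ with a non-standard parameter, but the reasons the induction step goes through differ: in $\N_\infty$ the only preimage of $\infty$ is $\infty$ itself, whereas in your model $c_0$ has no preimage at all. Your argument is more self-contained for the language $\{0,s\}$; the paper's choice of $\N_\infty$ pays off because the same model (and the same $\mathrm{IAtom}$ verification) is reused verbatim for every base theory up to $T_5$ with $+$ and $\cdot$, whereas your two-chain model does not obviously extend to the richer languages. Two small points of precision: in your case where $x$ occurs on both sides, ruling out $s^{k_1}a=s^{k_2}a$ for $k_1\neq k_2$ requires that $s$ have no \emph{periodic} points (no cycles of any length), not merely no fixed points --- injectivity reduces $s^{k_1}a=s^{k_2}a$ to $a=s^{|k_2-k_1|}a$; your model has no cycles at all, so the conclusion stands, but the justification should say so. Also, in the mixed case the component analysis is dispensable: injectivity of $s^\M$ alone already gives at most one solution of $s^{k_1}x=c$.
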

\begin{proof}
    This follows directly from Theorem~\ref{theorem:arithmetics_ninf_main}.
\end{proof}
Next, we want to show that $T_0 + \mathrm{IDClause} \vdash \mathrm{IClause}$.
For this, we need the following lemma:
\begin{lemma}
\label{lemma:arithmetics_t0_idclause_inf_pred}
    Let $\M \vDash T_0 + \mathrm{IDClause}$ and $a \in \M$ be non-standard.
    Then for every $n \in \N$, there is a sequence of distinct elements $b_0, \dots, b_n$ s.t.\ $s^\M b_i = b_{i+1}$ and $b_n= a$.
\end{lemma}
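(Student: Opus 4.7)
The plan is to argue by contradiction. Assume that the conclusion fails, so there is a maximal $N \geq 0$ such that some distinct $s$-chain $b_0, \ldots, b_N$ ending at $a$ exists but no distinct chain $b_0, \ldots, b_{N+1}$ does (such an $N$ exists since the case $n=0$ is trivially handled by $b_0 = a$).

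The key step is to apply $\mathrm{IDClause}$ to the dual clause
\[
    E(x) \;\equiv\; \bigwedge_{k=0}^{N} s^k x \neq a,
\]
expressing that $x$ does not reach $a$ in at most $N$ applications of $s$. Once $\mathbf{LHS}(E(x))$ is established, $\mathrm{IDClause}$ yields $\forall x\, E(x)$, which contradicts $E(a)$ since its $k=0$ conjunct reads $a \neq a$.

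The base $E(0)$ is immediate: each $s^k 0$ lies in $\S_\M$ while $a$ does not. For the inductive step, rewrite $E(sx)$ as $\bigwedge_{k=1}^{N+1} s^k x \neq a$; the conjuncts with $1 \leq k \leq N$ are already contained in $E(x)$, so only the new conjunct $s^{N+1} x \neq a$ needs justification. Here is where maximality of $N$ enters: if $s^{N+1} x = a$, let $k_0$ be minimal with $s^{k_0} x = a$. Then $k_0 \leq N+1$, and $E(x)$ forces $k_0 > N$, so $k_0 = N+1$. Minimality makes $x, sx, \ldots, s^{N+1} x = a$ pairwise distinct, for any repetition $s^i x = s^j x$ with $i < j \leq k_0$ would yield $a = s^{k_0 - j + i} x$ with $k_0 - j + i < k_0$; this produces a distinct chain of $N+2$ elements ending at $a$, contradicting the choice of $N$.

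The main conceptual step is identifying the correct dual clause; note that $E$ is a genuine dual clause of length $N+1$ rather than a single literal, which is why the argument uses $\mathrm{IDClause}$ essentially and not merely $\mathrm{ILiteral}$. No axioms from $T_0$ are invoked beyond $\mathrm{IDClause}$ together with basic first-order logic and the fact that $\S_\M$ and the non-standard part are disjoint.
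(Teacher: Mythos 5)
Your proof is correct, and while it follows the same overall strategy as the paper --- argue by contradiction from a maximal chain length $N$ and apply $\mathrm{IDClause}$ to a conjunction of disequations --- the induction formula you choose is genuinely different. The paper fixes one maximal chain $b_0,\dots,b_m$ ending at $a$ and inducts on $D(x) \equiv \bigwedge_{i=0}^{m} x \neq b_i$ (``$x$ avoids the chosen chain''), whereas you induct on $E(x) \equiv \bigwedge_{k=0}^{N} s^k x \neq a$ (``$x$ does not reach $a$ in at most $N$ steps''), which mentions only the single parameter $a$ and no particular chain. This buys you something concrete: in the paper's induction step one must rule out that some $c$ outside the chain satisfies $sc = b_j$ for $j \geq 1$, which is delicate because $T_0$ is empty and so $s$ need not be injective; your formulation avoids this entirely, since the only new conjunct in $E(sx)$ is $s^{N+1}x \neq a$, and your minimal-$k_0$ argument cleanly produces the forbidden chain of $N+2$ distinct elements. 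The price is a slightly longer step argument, but every part of it (the base case via standardness of $s^k 0$, the absorption of the old conjuncts, the distinctness via minimality of $k_0$, and the final contradiction from $E(a)$ at $k=0$) checks out, and your observation that the full dual clause --- not a single literal --- is needed here is accurate.
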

\begin{proof}
    Assume that there is a non-standard element $a$ and some $n$ s.t.\ such a sequence does not exist for $a$. Let $m$ be the biggest number s.t.\ such a sequence $b_0, \dots,b_m$ does exist for $a$ and consider the dual clause $D(x) \equiv \bigwedge_{i=0}^m x \neq b_i$. Since $a$ is non-standard, $\M \vDash D(0)$. If there is some element $c$ s.t.\ $D(c)$ holds and $D(sc)$ does not, then $sc = b_0$. Then, however, $c,b_0, \dots,b_m$ would be the sequence for $m+1$, which contradicts our assumption of $m$ being maximal. Thus, $\M \vDash D(x) \rightarrow D(sx)$. Induction on $D$ yields $\M \vDash \forall x : D(x)$, which contradicts $\M \vDash a = a$.
\end{proof}
\begin{lemma}
    $T_0 + \mathrm{IDClause} \vdash \mathrm{IOpen}$.
\end{lemma}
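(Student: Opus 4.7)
My plan is to reduce the claim to Shepherdson's axiom scheme $B_{n,m}$ from Theorem~\ref{theorem:arithmetics_t0_shepherdson} and then derive each $B_{n,m}$ using $\mathrm{IDClause}$ together with the chain lemma (Lemma~\ref{lemma:arithmetics_t0_idclause_inf_pred}). By Theorem~\ref{theorem:arithmetics_t0_shepherdson} we have $T_0 + \mathrm{IOpen} \approx T_0 + \{B_{n,m} \mid n,m \in \N,\, m \geq 1\}$, so it suffices to show that $T_0 + \mathrm{IDClause}$ proves every $B_{n,m}$.

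Fix $n, m$ with $m \geq 1$ and argue inside $T_0 + \mathrm{IDClause}$. I assume $s^n 0 = s^{n+m} 0$ and derive $\forall x : \bigvee_{k=0}^{n+m-1} x = s^k 0$ by contradiction. If some $a$ satisfies $a \neq s^k 0$ for all $k \in \{0,\dots,n+m-1\}$, then the periodicity forced by the assumption makes every standard $s^j 0$ land in $\{s^0 0,\dots,s^{n+m-1} 0\}$, so $a \notin \S_\M$, i.e.\ $a$ is non-standard. Lemma~\ref{lemma:arithmetics_t0_idclause_inf_pred} then supplies a chain of distinct elements $b_0, b_1,\dots,b_{n+m} = a$ with $s b_i = b_{i+1}$.

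The heart of the proof is a single application of atomic induction, which is a special case of $\mathrm{IDClause}$. Consider the atom $A(x) \equiv s^n x = s^{n+m} x$: its base case is exactly the hypothesis $s^n 0 = s^{n+m} 0$, and the induction step follows by applying $s$ to both sides via the equality axioms. Hence $\forall x : s^n x = s^{n+m} x$. Evaluating at $x = b_0$ gives $b_n = s^n b_0 = s^{n+m} b_0 = b_{n+m} = a$, contradicting the distinctness of the chain since $n < n+m$.

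The main subtlety, given Lemma~\ref{lemma:arithmetics_t0_idclause_inf_pred} and Shepherdson's characterisation, is spotting that $\mathrm{IDClause}$ (which subsumes $\mathrm{IAtom}$) is already enough to turn the local periodicity $s^n 0 = s^{n+m} 0$ into the global identity $\forall x : s^n x = s^{n+m} x$; once this is in hand, matching the indices $n$ and $n+m$ against a predecessor chain of length $n+m$ produced by the chain lemma forces the collision $b_n = b_{n+m}$ that breaks distinctness and completes the contradiction.
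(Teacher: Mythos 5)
Your proof is correct and follows essentially the same route as the paper: reduce to Shepherdson's axioms $\hyperref[ax:arithmetics_bnm]{B_{n,m}}$ via Theorem~\ref{theorem:arithmetics_t0_shepherdson}, use atomic induction (available inside $\mathrm{IDClause}$) on $s^n x = s^{n+m} x$ to globalize the periodicity, and contradict the chain Lemma~\ref{lemma:arithmetics_t0_idclause_inf_pred}. Your way of closing the contradiction --- forcing the collision $b_n = b_{n+m}$ on a single chain of $n+m+1$ distinct predecessors --- is, if anything, slightly sharper than the paper's appeal to finiteness of the set of iterated preimages of a non-standard element.
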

\begin{proof}
    From Theorem~\ref{theorem:general_empty_theory_iliteral_sur}, it follows that $T_0 + \mathrm{ILiteral} \vdash x = 0 \lor \exists y : x = sy$. Now let us analyze how models of $T_0 + \mathrm{IDClause}$ look like: Let $\M$ be any model with the domain $M$, $G=(M,E)$ be the graph induced by $s$ and $\C \subseteq 2^M$ the set of all connected components of $G$. We fix $C_0$ to be the component containing $0$ and distinguish two cases:

    1. Assume that there are no $n,m$ with $m \neq 0$ s.t.\ $s^n0 = s^{n+m}0$. Then, all of the $\hyperref[ax:arithmetics_bnm]{B_{n,m}}$ trivially hold. By Theorem $~\ref{theorem:arithmetics_t0_shepherdson}$, open induction holds in $\M$.

    2. Assume that there are $n,m \in \N$ with $m \geq 1$ s.t.\ $s^n0 = s^{n+m}0$. Consider the atom $A(x) \equiv s^nx = s^{n+m}x$. By assumption, $\M \vDash A(0)$. Take any element $b$ s.t.\ $\M \vDash A(b)$. Clearly, $\M \vDash s^n(sb) = s(s^nb) = s(s^{n+m}b) = s^{n+m}(sb)$ and thus, $\M \vDash A(sb)$. Induction on $A$ yields $\M \vDash \forall x : A(x)$. Now take any non-standard element $b$. By the observation above the set $\{a \in \M \mid \text{there is some } k\in \N \text{ s.t. } s^k(a) = b\}$ is finite. By Lemma~\ref{lemma:arithmetics_t0_idclause_inf_pred}, however, this set is infinite. Thus, there are no non-standard elements. In particular, $\M \vDash \hyperref[ax:arithmetics_bnm]{B_{n,m}}$. Again, by Theorem~\ref{theorem:arithmetics_t0_shepherdson}, open induction holds in $\M$.
\end{proof}
By Lemma~\ref{lemma:arithmetics_t0_bnm} and Theorem~\ref{theorem:arithmetics_t0_shepherdson}
 we also have $T_0 + \mathrm{IClause} \nvdash \mathrm{IOpen}$.
Let $i\in\{0,1\}$.
Since, trivially, $T_i + \mathrm{IOpen} \nvdash \mathrm{IClause}$ and
 $T_i + \mathrm{IOpen} \nvdash \mathrm{IDClause}$, we have
 $T_i + \mathrm{IClause} \approx T_i + \mathrm{IDClause} \approx T_i + \mathrm{IOpen}$.
It remains to show that $T_1 + \mathrm{ILiteral} \nvdash \mathrm{IClause}$.
\begin{lemma}\label{lemma:t0_ILiteral_nproves_IClause}
    $T_1 + \mathrm{ILiteral} \nvdash \mathrm{IClause}$.
\end{lemma}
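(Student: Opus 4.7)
My plan is to exhibit a direct model-theoretic separation by constructing a finite $\sL$-structure $\M$ with domain $\{0, 1, 2, a_1, a_2\}$, constant $0^\M = 0$, and successor given by $s^\M 0 = 1,\ s^\M 1 = 2,\ s^\M 2 = 1,\ s^\M a_1 = a_2,\ s^\M a_2 = a_1$. Thus $\M$ consists of a lollipop $0 \to 1 \to 2 \to 1$ together with a disjoint two-cycle $a_1 \to a_2 \to a_1$. Since $s^\M(x) \neq 0$ for every $x$, $\M \vDash T_1$. To see $\M \nvDash \mathrm{IClause}$ I would use $C(x) \equiv x = 0 \lor x = s0 \lor x = ss0$: clearly $\M \vDash C(0)$, and since $\{0,1,2\}$ is closed under $s^\M$ we have $\M \vDash C(x) \to C(sx)$ for every $x$; but $\M \nvDash C(a_1)$, so $\mathrm{I}(C)$ fails. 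Equivalently, $\M$ refutes $B_{1,2}$ since $s^\M 0 = s^\M s^\M s^\M 0 = 1$ yet $a_1 \notin \{0,1,2\}$.

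The main work is to show $\M \vDash \mathrm{ILiteral}$. A literal over $\{0,s\}$ in one induction variable $x$ with parameters is, up to swapping sides, of the form $s^i \tau_1 \sim s^j \tau_2$ with $\sim \in \{=, \neq\}$ and each $\tau_k \in \{0, x, z_1, \ldots\}$. After instantiating parameters to elements of $M$, I split into three cases: (a) $x$ occurs in neither term, so $L$ does not depend on $x$ and induction is trivial once $L(0)$ holds; (b) $L$ has the form $s^i x \sim s^j x$; (c) $L$ has the form $s^i x \sim s^j v$ for some fixed $v \in M$. For (c) with equality, the step $L(0) \to L(s0)$ applied at $x=0$ forces $s^{i+1} 0 = s^i 0$, i.e., $s^i 0$ must be a fixed point; $\M$ has none, so $\mathrm{LHS}$ cannot hold. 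For the negated variant $s^i x \neq s^j v$, the contrapositive of the step requires that $(\textup{image of } s^i) \cap s^{-1}(s^j v) \subseteq \{s^j v\}$; a direct check over the five possible values of $s^j v$ and their preimages under $s^\M$ shows this requirement is met only when $s^j v = 0$ (whose preimage is empty by $A1$), forcing $j = v = 0$, and then $\forall x\, (s^i x \neq 0)$ follows from $A1$ for $i \geq 1$.

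The substantive case is (b). The crucial structural fact is that both connected components of $\M$ are eventually periodic of period $2$ under $s^\M$: for every $y \in M$ and every $k \geq 1$, the value $s^k y$ is determined by the parity of $k$ together with the component of $y$. A short parity analysis shows that $\mathrm{LHS}$ of the positive literal $s^i x = s^j x$ (with $i < j$) can hold only when $i, j \geq 1$ and $i \equiv j \pmod 2$, and in that case $s^i y = s^j y$ for every $y \in M$; for the negated literal $s^i x \neq s^j x$ one additionally uses that the only non-injectivity of $s^\M$, namely $s^\M 0 = s^\M 2 = 1$, does not reach into the image of $s^i$ once $i \geq 1$, so the step is valid and parity again gives $\forall x\, L(x)$. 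The main obstacle is exactly this orbit/parity bookkeeping in case (b); it relies essentially on the fact that the disjoint cycle has the same length as the cycle in the $0$-component, since replacing $\{a_1, a_2\}$ by a cycle of another length or by a $\Z$-chain would admit a literal such as $sx = sssx$ that refutes $\mathrm{ILiteral}$ in the revised model.
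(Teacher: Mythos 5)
Your proof is correct and follows essentially the same route as the paper: the same five-element model (a lollipop $0\to1\to2\to1$ plus a disjoint $2$-cycle), refutation of $\mathrm{IClause}$ via $B_{1,2}$, and verification of $\mathrm{ILiteral}$ by the same case split on where $x$ occurs combined with a parity analysis of $s^k$. The only detail left implicit is the subcase $i=0$ of a negated literal $x \neq s^j x$ with $j$ even (and likewise $i=0$ in your case (c) with $s^jv=0$), where $\forall x\, L(x)$ fails but the induction hypothesis $\mathrm{LHS}$ already fails (the step breaks at $x=0$, resp.\ $L(0)$ is false), so $\mathbf{I}(L)$ holds vacuously.
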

\begin{proof}
    By Lemma~\ref{lemma:arithmetics_t0_bnm} it suffices to give a model of $T_1 + \mathrm{ILiteral}$ which does not satisfy $\hyperref[ax:arithmetics_bnm]{B_{n,m}}$ for some suitable $n,m$.
    Consider the following model $\M$: The domain is given by $\{0,1,2,a,b\}$. The symbols are interpreted in the following way:
    \begin{itemize}
        \item $0^\M = 0$
        \item $s^\M 0 = 1, s^\M 1 = 2, s^\M 2 = 1$
        \item $s^\M a = b, s^\M b = a$
    \end{itemize}
    Since $0$ is no successor, $T_1$ holds. Moreover, $\M \nvDash \hyperref[ax:arithmetics_bnm]{B_{1,2}}$. So, by Lemma~\ref{lemma:arithmetics_t0_bnm}, $\M \nvDash \mathrm{IClause}$.
    It remains to show that $\M \vDash \mathrm{ILiteral}$.

First, we analyze how terms are interpreted in this model: Any term $t$ has the form $s^n(y)$,
 where $y$ is either $x$, some parameter, or $0$.
Assume that $t \equiv s^nx$.
Then, there are three cases:
If $n = 0$, then $t(x) \equiv x$.
If $n>0$ and $n \equiv 0 \mod{2}$, then $t(0)^\M = t(2)^\M = 2, t(1)^{\M} = 1, t(a)^\M = a$
 and $t(b)^\M = b$.
If $n> 0$ and $n \equiv 1 \mod{2}$, then $t(0)^\M = t(2)^\M = 1, t(1)^\M = 2, t(a)^\M = b$
 and $t(b)^\M = a$.

Now consider an atom $A(x,z_1,z_2) \equiv t_1(x,z_1) = t_2(x,z_2)$.
We make a case distinction.

1.\ If $x$ occurs in neither $t_i$, then, for all $m_1, m_2 \in \M$:
 $\M \vDash \forall x (A(x,m_1,m_2) \leftrightarrow \top )$ or
 $\M \vDash \forall x (A(x,m_1,m_2) \leftrightarrow \bot )$, so induction over $A(x,m_1,m_2)$
 holds in $\M$.

2.\ If $x$ occurs in exactly one term, say $t_1$, then $A$ is $A(x,z_2) \equiv t_1(x) = t_2(z_2)$.
Fix $m_2 \in \M$.

2a.\ If $t_2^\M(m_2) \notin \mathrm{rng}(t_1^\M)$, then $\M \vDash \forall x (t_1(m) = t_2(m) \leftrightarrow \bot)$, so induction over $A(x,m_2)$ holds in $\M$.

2b.\ If $t_2^\M(m_2) \in \mathrm{rng}(t_1^\M)$, let $m_1 \in \M$ s.t.\ $t_1^\M(m_1) = t_2^\M(m_2)$.
Since $\M \vDash \forall x: t_1(x) \neq t_1(sx)$, we have $\M \nvDash t_1(m_1) = t_2(m_2) \rightarrow t_1(s m_1) = t_2(s m_2)$ and hence induction over $A(x,m_2)$ holds in $\M$.

3.\ If $x$ occurs in both terms, then $t_1(x) \equiv s^{k_1}x$, $t_2(x) \equiv s^{k_2}(x)$,
 and $A$ is $A(x) \equiv s^{k_1}(x) = s^{k_2}(x)$.
The function $t_i^\M$ is determined by the case distinction whether $k_i = 0$,
 $k_i \in 2\N + 1$, or $k_i \in 2\N + 2$.
Since each case yields a different interpretation of $t_i^\M(0)$, we have
 $\M \vDash \forall x (A(x) \leftrightarrow A(0))$, so $\M \vDash (\forall x A(x)) \leftrightarrow A(0)$ and thus induction over $A(x)$ holds in $\M$.

Now consider a negated atom $L(x,z_1,z_2) \equiv t_1(x,z_1) \neq t_2(x,z_2)$.
We make a case distinction.

1.\ If $x$ occurs in neither $t_i$, then, forall $m_1,m_2 \in \M$, induction over $A(x,m_1,m_2)$
 holds in $\M$ as above.

2.\ If $x$ occurs in exactly one term, say $t_1$, then $L$ is
 $L(x,z_2) \equiv s^{k_1} x \neq t_2(z_2)$.
Fix $m_2 \in \M$.

2a.\ If $t_2^\M(m_2) = 0$ and $n=0$, then $\M \nvDash L(0,m_2)$.

2b.\ If $t_2^\M(m_2) = 0$ and $n\neq 0$, then $\M \nvDash \forall x: L(x,m_2)$.

2c.\ If $t_2^\M(m_2) = c \neq 0$, then there are $d,e\in \M$ s.t.\ $\M \vDash sd = c$ and
 $\M \vDash s^{k_1} e = d$.
So $\M \vDash s^{k_1 + 1} e = c$ and, since $c \neq d$, $\M \nvDash L(e,m_2) \rightarrow L(se,m_2)$.

3.\ If $x$ occurs in both terms, then, as above, $\M \vDash \forall x (L(x) \leftrightarrow L(0))$
 and induction over $L(x)$ holds in $\M$.
\end{proof}

\begin{remark}\label{remark:t0_prindepresult}
The proof of Lemma~\ref{lemma:t0_ILiteral_nproves_IClause} shows that
 $T_1 + \mathrm{ILiteral} \nvdash B_{1,2}$ in order to separate
 $\mathrm{ILiteral}$ from $\mathrm{IClause}$.
This shows, as a corollary, that methods from automated inductive
 theorem proving that can be captured by $\mathrm{ILiteral}$, such
 as, for example, single clause induction in Vampire~\cite{Hajdu20Induction},
 cannot prove $B_{1,2}$ from $T_1$.
Similar results can be obtained for other $B_{n,m}$.
Since the sentences $B_{n,m}$ are of a complexity and of a content
 that one would regard as being in the scope of automated inductive
 theorem proving, this argument provides a {\em practically
 relevant independence result}.
\end{remark}

\section{Injective Successor}\label{section:arithmetics_t2}

For this section, we fix the language $\sL = \{0,s\}$, define the theory $T_2$
 and the additional axioms $B_n$.
\begin{definition}
    $T_2 = \{\hyperref[ax:arithmetics_a1]{A1}, \hyperref[ax:arithmetics_a3a]{A3a} \}$
\end{definition}
\begin{definition}
    We define the axiom
    \begin{flalign}
    & x \neq s^nx \text{ for any } n \geq 1. \label{ax:arithmetics_bn}\tag*{$\mathbf{B_n}$} &&
    \end{flalign}
\end{definition}
In \cite{Shepherdson65Nonstandard} the following was shown:
\begin{theorem}
\label{theorem:arithmetics_t2_shepherdson}
    $T_2 + \mathrm{IOpen}$ is equivalent to $T_2 + \{\hyperref[ax:arithmetics_bn]{B_n} \mid n \in \N, n \geq 1\}$.
\end{theorem}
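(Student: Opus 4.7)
The plan is to prove the two inclusions separately.

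For the direction $T_2 + \mathrm{IOpen} \vdash B_n$: for each fixed $n \geq 1$, I would apply open induction to the literal $\varphi(x) \equiv x \neq s^n x$. The base case $0 \neq s^n 0$ reduces to \hyperref[ax:arithmetics_a1]{A1} applied to the term $s(s^{n-1}0)$. For the induction step, if one had $x \neq s^n x$ together with $sx = s^n(sx) = s(s^n x)$, then injectivity (\hyperref[ax:arithmetics_a3a]{A3a}) would give $x = s^n x$, contradicting the hypothesis; hence $\mathrm{I}(\varphi)$ yields $\forall x(x \neq s^n x)$, which is $B_n$.

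For the direction $T_2 + \{B_n : n \geq 1\} \vdash \mathrm{IOpen}$ I would argue model-theoretically. Fix $\M \vDash T_2 + \{B_n\}$, an open formula $\varphi(x,\overline z)$, and parameter values $\overline m \in \M$; set $S = \{a \in \M \mid \M \vDash \varphi(a,\overline m)\}$ and assume $0 \in S$ and $S$ is closed under $s$. The strategy is to show $S = \M$ by analyzing which subsets of $\M$ are definable by an open $\{0,s\}$-formula.

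The key combinatorial observation is that every atom in the language $\{0,s\}$ has the shape $s^i t_1 = s^j t_2$ with $t_1,t_2 \in \{0, x, z_1,\dots\}$, and using \hyperref[ax:arithmetics_a3a]{A3a} together with $\{B_n\}$ one can simplify each such atom (after substituting $\overline m$) to $\top$, $\bot$, or an equation in $x$ whose solution set is at most a singleton, since $s$ is injective. Consequently $S$ is a Boolean combination of singletons and the full set, hence either finite or cofinite in $\M$. The finite case is ruled out quickly: $s$ restricted to $S$ would be an injection of a finite set into itself and hence a bijection, so every element of $S$ would be a successor, contradicting \hyperref[ax:arithmetics_a1]{A1} in view of $0 \in S$.

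The main obstacle is the cofinite case, where the finite complement $F = \M \setminus S$ is closed under taking predecessors (contrapositively from $S$'s closure under $s$) and, by $\{B_n\}$, contains no cycles. I would aim to reduce $\varphi$ to a normal form built from equations $x = c$ for closed terms $c$ in the parameters, and then argue chain-by-chain in the successor graph of $\M$ that any element of $F$ is forced into the standard chain of $0$, where the assumption $0 \in S$ together with closure under $s$ yields $F = \emptyset$. This chain-by-chain step is the delicate one and is precisely where Shepherdson's analysis is needed.
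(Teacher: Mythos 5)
Your first direction is correct and is essentially Lemma~\ref{lem:T2_ILiteral_Bn} of the paper (which proves the stronger $T_2+\mathrm{ILiteral}\vdash B_n$). Note, however, that the paper offers no proof of this theorem at all --- it is imported from Shepherdson --- so the entire content lies in your second direction, and there your argument is not a proof: after reducing to the case where $F=\M\setminus S$ is a finite, predecessor-closed, acyclic set, you explicitly defer the claim that $F$ must lie in the successor chain of $0$ to ``Shepherdson's analysis''. That is precisely the step the whole theorem turns on, so the proof has a gap at its decisive point.

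The gap is not merely a missing computation: the deferred claim is false once induction formulas may carry parameters, which the paper's conventions allow (and exploit, e.g.\ in the proof of Theorem~\ref{theorem:general_empty_theory_iliteral_sur}). Take $\M=\N\sqcup\N'$, two disjoint $\omega$-chains with $0^{\M}$ the base of the first and the base $0'$ of the second lying outside $\mathrm{rng}(s^{\M})$. Then $\M\vDash T_2+\{B_n\mid n\geq 1\}$, since $s^{\M}$ is injective, $0$ is not a successor, and there are no cycles. For the literal $L(x,z)\equiv x\neq z$ with $z:=0'$, the set $S$ contains $0$ and is closed under $s$, yet $F=\{0'\}$ is a finite, predecessor-closed, acyclic set that never reaches the chain of $0$ --- its bottom element simply has no predecessor and is not $0$. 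So your cofinite case cannot be closed as stated; equivalently, $T_2+\mathrm{IOpen}\vdash\mathrm{SUR}$ by Theorem~\ref{theorem:general_empty_theory_iliteral_sur} while this $\M$ shows $T_2+\{B_n\mid n\geq 1\}\nvdash\mathrm{SUR}$, so under the paper's notion of $\approx$ the asserted equivalence itself is problematic. Your term analysis does yield a complete proof for \emph{parameter-free} open induction (there $F$ consists only of numerals $s^k0$, which are excluded by $0\in S$ and closure under $s$), and that is the form in which the second direction can actually be carried out; a correct write-up must either restrict to that setting or address the role of parameters explicitly rather than waving at Shepherdson.
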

We can refine this theorem as follows:
\begin{theorem}
\label{theorem:arithmetics_t2_main}
    \begin{align*}
        T_2 &\approx T_2 + \mathrm{IAtom} \\
        &\precneq T_2 + \mathrm{ILiteral} \\
        &\approx T_2 + \mathrm{IOpen} \\
        &\approx T_2 + \{\hyperref[ax:arithmetics_bn]{B_n} \mid n \geq 1\}
    \end{align*}
    This yields the following Hasse Diagram:
    \begin{center}
        \begin{tikzpicture}
            \node (Base) at (0,-2) {$T_2 \approx T_2 + \mathrm{IAtom}$};
            \node (ILiteral) at (0,0) {$T_2 + \mathrm{ILiteral} \approx T_2 + \mathrm{IOpen}$};
            \draw (Base) -- (ILiteral);
        \end{tikzpicture}
    \end{center}
\end{theorem}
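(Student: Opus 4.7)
The plan is to split the chain of equivalences and the one strict inequality into four parts: (i) $T_2 \vdash \mathrm{IAtom}$, (ii) the strict separation $T_2+\mathrm{IAtom} \precneq T_2+\mathrm{ILiteral}$, (iii) $T_2 + \mathrm{ILiteral} \vdash \mathrm{IOpen}$, and (iv) the equivalence with $T_2 + \{B_n \mid n \geq 1\}$, which is exactly Theorem~\ref{theorem:arithmetics_t2_shepherdson}.

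For (i), the key observation is that in the language $\{0,s\}$ every term has the shape $s^k y$ with $y \in \{0\} \cup \mathrm{Var}$, so every atom $A(x,\overline{z})$ has the form $s^{k_1} y_1 = s^{k_2} y_2$ with $y_i \in \{0, x, z_1, \ldots\}$. I would case-split on whether $x$ occurs in neither, both, or exactly one of the $y_i$. If $x$ occurs in neither, the induction is trivial. If $x$ occurs on both sides, then $A$ is $s^{k_1} x = s^{k_2} x$; assuming $k_1 \leq k_2$, the base instance $A(0)$ together with $k_1$-fold cancellation via \hyperref[ax:arithmetics_a3a]{A3a} yields $0 = s^{k_2-k_1} 0$, which is refuted by \hyperref[ax:arithmetics_a1]{A1} unless $k_1 = k_2$, in which case $\forall x\, A(x)$ is logically valid. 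If $x$ occurs on exactly one side, say $A(x,y) \equiv s^{k_1} x = s^{k_2} y$ with $y$ not involving $x$, then the LHS of induction implies $A(0) \wedge A(s 0)$, which gives $s^{k_1} 0 = s^{k_1 + 1} 0$; $k_1$-fold \hyperref[ax:arithmetics_a3a]{A3a} now collapses this to $0 = s 0$, contradicting \hyperref[ax:arithmetics_a1]{A1}. In every non-trivial subcase $T_2$ already refutes the LHS of induction, so $\mathbf{I}(A)$ holds vacuously.

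For (ii), the reduct of $\N_\infty$ to the language $\{0,s\}$ is a model of $T_2 + \mathrm{IAtom}$ by Lemmas~\ref{lemma:arithmetics_ninf_model} and~\ref{lemma:arithmetics_ninf_iatom} but fails $\mathrm{ILiteral}$ by Lemma~\ref{lemma:arithmetics_ninf_iliteral}, witnessing the strict separation (and equivalently this is already contained in Theorem~\ref{theorem:arithmetics_ninf_main}). For (iii), by Theorem~\ref{theorem:arithmetics_t2_shepherdson} it suffices to derive each $B_n$ from $T_2 + \mathrm{ILiteral}$. I would fix $n \geq 1$ and take the literal $L(x) \equiv x \neq s^n x$. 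The base $L(0)$ is immediate from \hyperref[ax:arithmetics_a1]{A1} applied to $s^{n-1} 0$. For the step, $n$-fold application of \hyperref[ax:arithmetics_a3a]{A3a} gives $sx = s^n(sx)$ iff $x = s^n x$, and contrapositively $L(x) \rightarrow L(sx)$. Induction on $L$ then yields $\forall x\, x \neq s^n x$, which is $B_n$.

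The main obstacle I expect is (i): the argument itself is not deep, but it requires a careful enumeration of the possible atomic shapes and the verification that, outside the trivial cases, the LHS of induction really is refuted in $T_2$ via the interplay of cancellation (\hyperref[ax:arithmetics_a3a]{A3a}) and $0$ not being a successor (\hyperref[ax:arithmetics_a1]{A1}). Parts (ii) and (iii) are each short once the right model, respectively the right literal, has been chosen.
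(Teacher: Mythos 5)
Your proposal is correct and follows essentially the same route as the paper: the same case analysis on atomic shapes $s^{k_1}y_1 = s^{k_2}y_2$ for $T_2 \vdash \mathrm{IAtom}$, the same literal $x \neq s^n x$ for deriving $B_n$ from $\mathrm{ILiteral}$, the reduct of $\N_\infty$ for the strict separation, and Shepherdson's theorem to close the loop with $\mathrm{IOpen}$. (Only a cosmetic remark: in the step case for $B_n$ a single application of \hyperref[ax:arithmetics_a3a]{A3a} to $sx = s(s^n x)$ already suffices, not an $n$-fold one.)
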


\begin{lemma}\label{lem:T2_ILiteral_Bn}
    $T_2 + \mathrm{ILiteral} \vdash \hyperref[ax:arithmetics_bn]{B_n}$ for any $n \geq 1$.
\end{lemma}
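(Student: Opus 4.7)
The plan is to verify $\hyperref[ax:arithmetics_bn]{B_n}$ directly by induction on the literal $L(x) \equiv x \neq s^n x$, using $\hyperref[ax:arithmetics_a1]{A1}$ for the base case and $\hyperref[ax:arithmetics_a3a]{A3a}$ for the induction step.

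For the base case, I need $0 \neq s^n 0$. Since $n \geq 1$, write $s^n 0 = s(s^{n-1} 0)$, and $\hyperref[ax:arithmetics_a1]{A1}$ (instantiated at $s^{n-1} 0$) gives $s(s^{n-1} 0) \neq 0$, hence $s^n 0 \neq 0$ and thus $L(0)$.

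For the induction step, assume $L(x)$, i.e., $x \neq s^n x$. I want $L(sx)$, i.e., $sx \neq s^n(sx)$. Since $s^n(sx) = s(s^n x)$ (a syntactic identity), suppose for contradiction $sx = s(s^n x)$; then $\hyperref[ax:arithmetics_a3a]{A3a}$ yields $x = s^n x$, contradicting the induction hypothesis.

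Combining, $T_2$ proves $\hyperref[ax:arithmetics_lhs]{\mathbf{LHS}(L(x))}$, and then $\hyperref[ax:arithmetics_induction]{\mathbf{I}(L(x))} \in \mathrm{ILiteral}$ gives $\forall x : x \neq s^n x$, which is $\hyperref[ax:arithmetics_bn]{B_n}$. I do not foresee a real obstacle here; the result is essentially immediate from the two base axioms together with induction on a single literal, and the parameter-free formulation avoids any issue with instantiation.
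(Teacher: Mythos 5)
Your proof is correct and follows essentially the same route as the paper: induction on the literal $L(x) \equiv x \neq s^n x$, with the base case from $\hyperref[ax:arithmetics_a1]{A1}$ and the step from the contrapositive of $\hyperref[ax:arithmetics_a3a]{A3a}$ applied to $s^n(sx) = s^{n+1}x$. No gaps.
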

\begin{proof}
    Fix some $n \geq 1$. We work in $T_2 + \mathrm{ILiteral}$. Consider the literal $L(x) \equiv x \neq s^nx$. By $\hyperref[ax:arithmetics_a1]{A1}$, $T_2 \vdash L(0)$. By $\hyperref[ax:arithmetics_a1]{A3a}$, $T_2 \vdash x\neq s^nx \rightarrow sx \neq s^{n+1}x$. Thus, $T_2 \vdash \hyperref[ax:arithmetics_lhs]{\mathbf{LHS}(L)}$ and $T_2 + \mathrm{ILiteral} \vdash \forall x : x \neq s^nx$.
\end{proof}

\begin{lemma}
    $T_2 \vdash \mathrm{IAtom}$
\end{lemma}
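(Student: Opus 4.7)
The plan is to prove $T_2 \vdash \mathbf{I}(A)$ for every atom $A(x,\overline{z})$ by a syntactic case distinction on the shape of $A$. In the language $\{0,s\}$ every term has the form $s^k(y)$ where $y \in \{0, x, z_1, \ldots\}$, so every atom is an equation $s^{k_1}(y_1) = s^{k_2}(y_2)$. I split on whether the induction variable $x$ occurs on neither, exactly one, or both sides.

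If $x$ occurs on neither side, the atom is independent of $x$, so $\mathbf{I}(A)$ holds trivially. If $x$ occurs on exactly one side, I may WLOG assume $A(x) \equiv s^{k_1}(x) = c$, where $c$ abbreviates a term $s^{k_2}(y_2)$ containing no occurrence of $x$. I show that $\mathbf{LHS}(A)$ is already inconsistent with $T_2$ (so the induction axiom holds vacuously): from $A(0)$ we have $s^{k_1}(0) = c$, and instantiating the step $\forall x (A(x)\rightarrow A(sx))$ at $x = 0$ gives $s^{k_1+1}(0) = c$. Chaining these yields $s^{k_1}(s(0)) = s^{k_1}(0)$; applying $\hyperref[ax:arithmetics_a3a]{A3a}$ exactly $k_1$ times (peeling off the outer $s$'s) reduces this to $s(0) = 0$, which contradicts $\hyperref[ax:arithmetics_a1]{A1}$. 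The case $k_1 = 0$ works directly, since then $A(0)$ gives $c = 0$ and the step at $0$ gives $s(0) = c = 0$.

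If $x$ occurs on both sides, then $A(x) \equiv s^{k_1}(x) = s^{k_2}(x)$; WLOG $k_1 \leq k_2$. If $k_1 = k_2$, the atom is logically valid, so $\mathbf{I}(A)$ holds trivially. If $k_1 < k_2$, already the base case $A(0)$ is inconsistent with $T_2$: it reads $s^{k_1}(0) = s^{k_1}(s^{k_2-k_1}(0))$, so $k_1$ applications of $\hyperref[ax:arithmetics_a3a]{A3a}$ give $0 = s^{k_2-k_1}(0)$, contradicting $\hyperref[ax:arithmetics_a1]{A1}$.

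There is no substantive obstacle; the content of the argument is that in the minimal language $\{0,s\}$ the only atoms on which induction says something nontrivial are those whose hypothesis is refutable already in $T_2$. The only thing to be careful about is handling parameters and the boundary cases $k_1=0$ and $k_1=k_2$, but these fit into the same template. The key syntactic fact being used is that $T_2$ proves $s^{k_1}(t) = s^{k_1}(u) \to t = u$ (by iterating $\hyperref[ax:arithmetics_a3a]{A3a}$) together with $\hyperref[ax:arithmetics_a1]{A1}$.
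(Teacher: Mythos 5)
Your proof is correct and follows essentially the same route as the paper's: the same three-way case distinction on whether $x$ occurs in neither, one, or both sides of the equation, with the same key ingredients (A1 together with iterated A3a) showing that in every nontrivial case the induction hypothesis $\mathbf{LHS}(A)$ is already refutable. The only difference is presentational: you give a syntactic derivation in $T_2$, while the paper verifies the same facts semantically in an arbitrary model of $T_2$.
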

\begin{proof}
Take any model $\M$ of $T_2$ and any atom $A(x,z_1,z_2) \equiv t_1(x,z_1) = t_2(x,z_2)$.
We make a case distinction.

1.\ If neither $t_1$ nor $t_2$ contains $x$, then for all $m_1, m_2 \in \M$:
 $\M \vDash \forall x (A(x,m_1,m_2) \leftrightarrow \top)$ or
 $\M \vDash \forall x (A(x,m_1,m_2) \leftrightarrow \bot)$ and thus induction over
 $A(x,m_1,m_2)$ holds in $\M$.

2.\ If $x$ occurs in exactly one term, say $t_1$, then $A$ is $A(x,z_2) \equiv s^m(x) = t_2(z_2)$.
Fix $m_2 \in \M$ and assume $\M \vDash A(0,m_2)$, i.e., $\M \vDash s^{k_1}(0) = t_2(m_2)$.
From $\hyperref[ax:arithmetics_a3a]{A3a}$ and $\hyperref[ax:arithmetics_a1]{A1}$ it follows that
 $\M \vDash t_1(0) = s^{k_1} 0 \neq s^{k_1 + 1} 0 = t_1(s0)$.
So $\M \nvDash A(s0,m_2)$ and thus induction over $A(x,m_2)$ holds in $\M$.

3.\ If $x$ occurs in both terms, then $A$ is $A(x) \equiv s^{k_1}(x) = s^{k_2}(x)$.
Assume that $\M \vDash A(0)$.
Then, by $\hyperref[ax:arithmetics_a3a]{A3a}$ and $\hyperref[ax:arithmetics_a1]{A1}$, $k_1 = k_2$.
So $\M \vDash \forall x : A(x)$ and thus induction over $A(x)$ holds in $\M$.
\end{proof}

\begin{lemma}\label{lem:T2_IAtom_ILiteral}
    $T_2 + \mathrm{IAtom} \nvdash \mathrm{ILiteral}$
\end{lemma}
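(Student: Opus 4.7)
The plan is to exploit the immediately preceding lemma, which shows $T_2 \vdash \mathrm{IAtom}$. This means the theories $T_2$ and $T_2 + \mathrm{IAtom}$ have the same models, so it suffices to exhibit a single model of $T_2$ in which $\mathrm{ILiteral}$ fails. Lemma~\ref{lem:T2_ILiteral_Bn} then gives me the right handle: since $T_2 + \mathrm{ILiteral} \vdash B_n$ for every $n \geq 1$, any model of $T_2$ containing an element $a$ with $a = s^n a$ for some $n \geq 1$ automatically refutes $\mathrm{ILiteral}$.

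I would therefore construct such a model as simply as possible. Let $\M$ have domain $\N \cup \{a\}$ for a fresh element $a$, interpret $0$ as the natural number $0$, let $s^\M$ act as the standard successor on $\N$, and set $s^\M(a) = a$. Then $\hyperref[ax:arithmetics_a1]{A1}$ holds because $0$ is not in the image of $s^\M$, and $\hyperref[ax:arithmetics_a3a]{A3a}$ holds because $s^\M$ is injective: it is injective on $\N$, $a$ is the unique preimage of $a$, and the two parts have disjoint images. Thus $\M \vDash T_2$, and hence $\M \vDash T_2 + \mathrm{IAtom}$ by the preceding lemma.

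Finally, $a = s^\M(a)$ gives $\M \nvDash B_1$, so by Lemma~\ref{lem:T2_ILiteral_Bn} we conclude $\M \nvDash \mathrm{ILiteral}$, and the desired non-derivability follows. There is no real obstacle here: the key insight, that $T_2$ already proves $\mathrm{IAtom}$ and so adding it changes nothing at the level of models, was established one lemma earlier, and the witnessing model is the minimal one-point extension of $\N$ by an $s$-fixed point.
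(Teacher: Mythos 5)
Your proof is correct and essentially identical to the paper's: your model is precisely the $\{0,s\}$-reduct of $\N_\infty$ (with $a$ in place of $\infty$), and you refute $\mathrm{ILiteral}$ exactly as the paper does, via the failure of $B_1$ and Lemma~\ref{lem:T2_ILiteral_Bn}. The only cosmetic difference is that you obtain $\M \vDash \mathrm{IAtom}$ from the preceding lemma $T_2 \vdash \mathrm{IAtom}$ rather than from Lemma~\ref{lemma:arithmetics_ninf_iatom}, which is equally valid.
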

\begin{proof}
Let $\N_\infty^*$ be the reduct of $\N_\infty$ to $\sL = \{0,s\}$.
Then, by Lemma~\ref{lemma:arithmetics_ninf_model}, $\N_\infty^* \vDash T_2$.
Moreover, by Lemma~\ref{lemma:arithmetics_ninf_iatom}, $\N_\infty^* \vDash \mathrm{IAtom}$.
Furthermore, note that for all $n\geq 1$: $\N_\infty^* \nvDash B_n$.
Therefore, by Lemma~\ref{lem:T2_ILiteral_Bn}, $\N_\infty^* \nvDash \mathrm{ILiteral}$.
\end{proof}

\begin{remark}\label{remark:t2_prindepresult}
The proof of Lemma~\ref{lem:T2_IAtom_ILiteral} shows that for all $n\geq 1$:
 $T_2 + \mathrm{IAtom} \nvdash B_n$.
This shows, as a corollary, that methods from automated inductive theorem proving that are
 captured by atomic induction, such as equational theory exploration~\cite{Claessen13Automating},
 cannot prove $B_n$ from $T_2$.
\end{remark}

\section{Adding the Predecessor}

For this section, we fix the language $\sL = \{0,s,p\}$, define the theory $T_3$
 and the addtional axiom $B1$.
\begin{definition}
    $T_3 = \{\hyperref[ax:arithmetics_a1]{A1}, \hyperref[ax:arithmetics_a2]{A2}, \hyperref[ax:arithmetics_a3]{A3}\}$
\end{definition}
\begin{definition}
    We define the axiom:
    \begin{flalign}
        & x \neq 0 \rightarrow x = spx \label{ax:arithmetics_b1}\tag*{$\mathbf{B1}$} && 
    \end{flalign}
\end{definition}
In \cite{Shepherdson65Nonstandard} the following was proven:
\begin{theorem}
    $T_3 + \mathrm{IOpen}$ is equivalent to $T_3 + \{\hyperref[ax:arithmetics_bn]{B_n} \mid n \in \N, n\geq 1\} + \{\hyperref[ax:arithmetics_b1]{B1}\}.$
\end{theorem}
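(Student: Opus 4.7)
The plan is to prove both inclusions separately. The forward direction is a short calculation; the converse requires a model-theoretic analysis.

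For $T_3 + \mathrm{IOpen} \vdash B_n$ and $B1$, I would argue analogously to Lemma~\ref{lem:T2_ILiteral_Bn} and Lemma~\ref{lemma:arithmetics_t0_bnm}. For each $n \geq 1$, the literal $L(x) \equiv x \neq s^n x$ satisfies $L(0)$ by iterating $A1$ and $L(x) \to L(sx)$ by the injectivity $A3a$ (derivable from $A3$), so literal induction yields $B_n$. For $B1$, use the clause $F(x) \equiv x = 0 \lor x = spx$: $F(0)$ is trivial, and $F(sx)$ follows from $A3$ since $p(sx) = x$ gives $sp(sx) = sx$.

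For the converse $T_3 + \{B_n\} + \{B1\} \vdash \mathrm{IOpen}$, fix $\M \vDash T_3 + \{B_n\} + \{B1\}$. A preliminary structural analysis shows that $\M$ decomposes (as an $\sL$-structure) into the standard chain $\{s^n 0 \mid n \in \N\}$ and a disjoint union of non-standard $\Z$-chains on which $s$ and $p$ act as mutually inverse bijections: $A1$ makes $0$ the unique element with no $s$-preimage, $B1$ forces every other element to be an $s$-image, $A3$ gives injectivity of $s$, and $\{B_n\}$ rules out cycles. Now, for an open $\varphi(x,\bar z)$, parameters $\bar m$, and $S = \{x \in \M \mid \varphi(x, \bar m)\}$: using $psx = x$ (from $A3$) and $spx = x$ for $x \neq 0$ (from $B1$), every term over $\{0,s,p\}$ simplifies on any non-standard chain $C$ to either $s^k x$ ($k \in \Z$) or a chain-constant $s^k c$. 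Hence each atom, restricted to $C$, is universally tautological, universally inconsistent, or pins $x$ to a single element of $C$ (this last case only when exactly one side involves $x$ and the other side lies in $C$). Writing $\varphi$ in DNF, each dual clause restricted to $C$ is empty, a single point, cofinite in $C$, or all of $C$, so $S \cap C$ is finite or cofinite. Assuming the induction hypotheses $\varphi(0, \bar m)$ and $\forall x(\varphi(x, \bar m) \to \varphi(sx, \bar m))$, iterating the step from $0$ covers the standard chain. On each non-standard chain $C$, $S \cap C$ is closed under $s$ while its complement is closed under $p$; since a proper finite subset of a $\Z$-chain is not closed under $s$ and no finite subset has $\Z$-chain complement closed under $p$, we obtain the dichotomy $S \cap C \in \{\emptyset, C\}$.

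The main obstacle is to exclude $S \cap C = \emptyset$. The plan is a pigeonhole argument: since $\varphi$ has only finitely many DNF disjuncts but holds on the infinite standard chain, some disjunct $D$ holds on cofinitely many standard elements. The case analysis above forces every positive atom of such a $D$ to be universally tautological (a pinning atom cannot produce a cofinite set, and a universally inconsistent atom would make $D$ empty) and each negative literal to be either universally true or to exclude at most one element uniformly. These properties transfer to any non-standard chain $C$: tautological atoms remain tautological, and a negative literal $s^a x \neq s^b m_i$ excludes at most one element of $C$, and only when $m_i \in C$. Hence $D \cap C$ is cofinite, so $S \cap C$ is cofinite, and the dichotomy forces $S \cap C = C$. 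Thus $\varphi$ holds throughout $\M$, establishing $\mathrm{IOpen}$. The delicate part is the careful bookkeeping of how each literal's truth value on $C$ depends on the chain locations of the parameters $\bar m$.
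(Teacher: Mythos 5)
The paper does not prove this theorem at all: it is imported verbatim as a result of Shepherdson (\cite{Shepherdson65Nonstandard}), so any proof you give is by definition a different route. Your strategy is sound and is essentially the standard model-theoretic argument one would reconstruct. The forward direction is fine (both $x \neq s^n x$ and $x = 0 \lor x = spx$ are open, so $\mathrm{IOpen}$ applies directly, with $L(0)$ from $A1$ and the step from $A3$). For the converse, the decomposition of a model of $T_3 + \{B_n\} + \{B1\}$ into the standard $\N$-chain plus disjoint $\Z$-chains is correct ($A1$ and injectivity via $A3$ make the standard chain a genuine copy of $\N$; $B1$ supplies predecessors off $0$; the $B_n$ exclude cycles), and the finite-or-cofinite analysis of a dual clause on a non-standard chain, combined with upward closure of $S \cap C$ under $s$ and downward closure of its complement under $p$, correctly yields the dichotomy $S \cap C \in \{\emptyset, C\}$.

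Two points in the final step need tightening, though neither is fatal. First, pigeonhole over the DNF disjuncts only gives you a disjunct $D$ holding on \emph{infinitely} many standard elements, not cofinitely many; infinitely many does suffice, but you should say so. Second, you cannot literally reuse the non-standard trichotomy (tautological / inconsistent / pinning) on the standard chain, because $p$ truncates at $0$ there: the atom $spx = x$, for instance, holds at every standard element except $0$, so it is neither universally true nor a singleton, yet it is exactly the kind of atom that legitimately appears in $D$. The correct transfer argument is via the observation that every $\{0,s,p\}$-term acts as $x \mapsto s^k x$ on all sufficiently large standard elements (no truncation once $x$ exceeds the number of $p$'s); from infinitely many standard witnesses of a literal you then read off the signed offsets $k_1, k_2$ and conclude that each positive literal of $D$ with $x$ on both sides has $k_1 = k_2$ (hence is universally true on every non-standard chain), that no positive literal with $x$ on exactly one side can occur, and that each negative literal excludes at most one element per non-standard chain. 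This yields $D$ cofinite on every $C$ and closes the argument. With that bookkeeping made explicit, your proof is complete and self-contained, which is more than the paper offers for this statement.
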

Again, we can strengthen this result:
\begin{theorem}
    \begin{align*}
        T_3 &\approx T_3 + \mathrm{IAtom} \\
        &\precneq T_3 + \mathrm{ILiteral} \\
        &\approx T_3 + \mathrm{IOpen} \\
        &\approx T_3 + \{\hyperref[ax:arithmetics_bn]{B_n} \mid n \in \N, n\geq 1\} + \{\hyperref[ax:arithmetics_b1]{B1}\}
    \end{align*}
    This yields the following Hasse diagram:
    \begin{center}
        \begin{tikzpicture}
            \node (Base) at (0,-2) {$T_3 \approx T_3 + \mathrm{IAtom}$};
            \node (ILiteral) at (0,0) {$T_3 + \mathrm{ILiteral} \approx T_3 + \mathrm{IOpen}$};
            \draw (Base) -- (ILiteral);
        \end{tikzpicture}
    \end{center}
\end{theorem}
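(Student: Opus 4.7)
The theorem bundles four claims: (a) $T_3 \vdash \mathrm{IAtom}$; (b) the strict inclusion $T_3 + \mathrm{IAtom} \nvdash \mathrm{ILiteral}$; (c) $T_3 + \mathrm{ILiteral} \vdash \mathrm{IOpen}$; and (d) the equivalence $T_3 + \mathrm{IOpen} \approx T_3 + \{B_n \mid n \geq 1\} + \{B1\}$, which is exactly the Shepherdson result restated just above. Claim (b) is immediate from Theorem~\ref{theorem:arithmetics_ninf_main}, since $T_3 \subseteq T$ and $\{0,s,p\} \subseteq \sL$: the appropriate reduct of $\N_\infty$ is a model of $T_3 + \mathrm{IAtom}$ but not of $\mathrm{ILiteral}$.

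For (c), by the restated Shepherdson theorem it suffices to derive every $B_n$ ($n \geq 1$) and $B1$ from $T_3 + \mathrm{ILiteral}$. The $B_n$ follow exactly as in Lemma~\ref{lem:T2_ILiteral_Bn} by inducting on the literal $L(x) \equiv x \neq s^n x$, with A1 supplying the base case and A3a (a consequence of A3) supplying the step. For $B1$, Theorem~\ref{theorem:general_empty_theory_iliteral_sur} first yields $T_3 + \mathrm{ILiteral} \vdash \mathrm{SUR}$; then if $x \neq 0$, $\mathrm{SUR}$ gives some $y$ with $x = sy$, and A3 yields $spx = sp(sy) = sy = x$.

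The heart of the proof is (a), $T_3 \vdash \mathrm{IAtom}$, which adapts the case analysis used for $T_2 \vdash \mathrm{IAtom}$. The key observation is that in the language $\{0,s,p\}$ every term with a single variable leaf admits the normal form $s^n p^m y$: by A3 each $ps$ pair collapses to the identity, so all $s$'s can be pushed in front of all $p$'s; and A2 further reduces $s^n p^m 0$ to $s^n 0$. Fixing $\M \vDash T_3$ and an atom $A(x,\overline{z}) \equiv t_1 = t_2$, I would split on whether $x$ occurs in neither, one, or both $t_i$. In the middle case, say $t_1 = s^n p^m x$ and $t_2$ does not contain $x$, the key computation is that $t_1^\M(s^k 0)$ is constantly $s^n 0$ for $0 \leq k \leq m$ and equals $s^{n+k-m} 0$ for $k \geq m$; so either $A(0,\overline{m})$ fails and $\mathbf{LHS}(A)$ is already false, or $t_2^\M(\overline{m}) = s^n 0$ and then $A(s^m 0,\overline{m})$ holds while $A(s^{m+1} 0,\overline{m})$ fails by A1 and A3a, again breaking the induction step. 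The both-variable case is analogous, using $A(0)$ to force $n_1 = n_2$ and then breaking the step at $x = s^{\min(m_1,m_2)} 0$ whenever $m_1 \neq m_2$.

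The main obstacle is locating the right jump point in (a): unlike in the $T_2$ analysis, the presence of $p$ makes $t_1^\M$ sit constant along $s^k 0$ for $k \leq m$ before it starts to move, so the witness for the failure of the induction step is the transition $s^m 0 \to s^{m+1} 0$ rather than $0 \to s 0$. Reading the exponent $m$ off the normal form is precisely what makes the argument go through; once this is done, the remaining case bookkeeping is routine.
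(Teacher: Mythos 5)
Your proposal is correct and follows essentially the same route as the paper: the same decomposition into $T_3 \vdash \mathrm{IAtom}$ via the $s^n p^m y$ normal form and a three-way case split, the separation via $\N_\infty$ (Theorem~\ref{theorem:arithmetics_ninf_main}), the derivation of $B_n$ by literal induction on $x \neq s^n x$ and of $B1$ via $\mathrm{SUR}$, and Shepherdson's theorem for the top equivalence. The only (cosmetic) differences are that you pinpoint the failing induction step explicitly at $s^{m}0 \to s^{m+1}0$ where the paper merely asserts its existence in $\{0,\dots,m_1\}$, and in the both-sides case you force $n_1=n_2$ from $A(0)$ rather than first normalizing one exponent to $0$.
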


\begin{lemma}
    \label{lemma:arithmetics_t3_iliteral_bn}
    $T_3 + \mathrm{ILiteral} \vdash \hyperref[ax:arithmetics_bn]{B_n}$ for all $n \geq 1$.
\end{lemma}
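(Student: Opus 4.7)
The plan is to adapt the proof of Lemma~\ref{lem:T2_ILiteral_Bn} almost verbatim, with the only new ingredient being the observation that the axioms of $T_3$ still yield the injectivity of $s$. Recall that the paper explicitly notes that whenever \hyperref[ax:arithmetics_a3]{A3} is available, \hyperref[ax:arithmetics_a3a]{A3a} can be used freely, since $sx = sy$ implies $x = p(sx) = p(sy) = y$ by \hyperref[ax:arithmetics_a3]{A3}. Thus $T_3$ retains exactly the two features used in the $T_2$-proof: $0$ is not a successor (\hyperref[ax:arithmetics_a1]{A1}), and $s$ is injective.

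Concretely, I would fix $n \geq 1$ and work inside $T_3 + \mathrm{ILiteral}$ with the literal $L(x) \equiv x \neq s^n x$. For the base case, \hyperref[ax:arithmetics_a1]{A1} gives $s^n 0 = s(s^{n-1} 0) \neq 0$, so $L(0)$ holds. For the induction step, assume $L(x)$, i.e.\ $x \neq s^n x$, and suppose toward a contradiction that $sx = s^{n+1} x = s(s^n x)$. By the injectivity of $s$ (derived from \hyperref[ax:arithmetics_a3]{A3} as above), this forces $x = s^n x$, contradicting $L(x)$. Hence $L(x) \rightarrow L(sx)$, so \hyperref[ax:arithmetics_lhs]{$\mathbf{LHS}(L)$} is derivable in $T_3$.

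Applying the literal induction axiom $\mathbf{I}(L) \in \mathrm{ILiteral}$ yields $\forall x : x \neq s^n x$, which is precisely \hyperref[ax:arithmetics_bn]{$B_n$}. Since $n \geq 1$ was arbitrary, this establishes the lemma. There is no real obstacle here: the proof is a direct transcription of the $T_2$-argument, and the only thing to verify is that \hyperref[ax:arithmetics_a3a]{A3a} follows from \hyperref[ax:arithmetics_a3]{A3}, which is immediate from applying $p$ to both sides of $sx = sy$.
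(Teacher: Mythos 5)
Your proposal is correct and matches the paper's proof essentially verbatim: both use the literal $L(x) \equiv x \neq s^n x$, derive $L(0)$ from \hyperref[ax:arithmetics_a1]{A1}, and obtain the induction step by applying $p$ to both sides of $sx = s^{n+1}x$ via \hyperref[ax:arithmetics_a3]{A3} (i.e.\ the injectivity of $s$), then invoke literal induction. The only cosmetic difference is that the paper writes out the chain $x = psx = ps^{n+1}x = ps^nx$ directly rather than first isolating \hyperref[ax:arithmetics_a3a]{A3a} as a lemma.
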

\begin{proof}
Fix any $n \geq 1$.
We work in $T_3 + \mathrm{ILiteral}$.
Consider the literal $L(x) \equiv x \neq s^nx$.
By \hyperref[ax:arithmetics_a1]{A1} we have that $L(0)$ holds.
By \hyperref[ax:arithmetics_a3]{A3} we have $sx = s^{n+1}x \rightarrow x = psx = ps^{n+1}x = ps^nx$
 and by contraposition $L(x) \rightarrow L(sx)$.
By induction on $L$, we obtain $\forall x : x \neq s^nx$.
\end{proof}

\begin{lemma}
\label{lemma:arithmetics_t3_iliteral_b1}
    $T_3 + \mathrm{ILiteral} \vdash \hyperref[ax:arithmetics_b1]{B1}$
\end{lemma}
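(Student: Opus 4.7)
The plan is to avoid trying to induct directly on $B1$, since $x=0 \lor x = spx$ is a two-literal clause rather than a literal and hence not obviously amenable to $\mathrm{ILiteral}$. Instead I will route the proof through the already-proved fact that $\mathrm{ILiteral}$ forces $s$ to be surjective.

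First, I would invoke Theorem~\ref{theorem:general_empty_theory_iliteral_sur}, which gives $T_3 + \mathrm{ILiteral} \vdash \hyperref[ax:arithmetics_sur]{\mathrm{SUR}}$, i.e., every element is either $0$ or a successor. Fix an arbitrary $x$ and assume $x \neq 0$; by $\hyperref[ax:arithmetics_sur]{\mathrm{SUR}}$ pick $y$ with $x = sy$.

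Next, I would apply axiom \hyperref[ax:arithmetics_a3]{A3} to compute $px = p(sy) = y$, whence $spx = sy = x$. Since $x$ was arbitrary, this establishes $x \neq 0 \rightarrow x = spx$, which is precisely $\hyperref[ax:arithmetics_b1]{B1}$.

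There is no real obstacle here; the only subtlety is recognising that $B1$ should not be attacked by a direct literal induction (which would be awkward given its disjunctive form) but rather by exploiting the surjectivity of $s$ that $\mathrm{ILiteral}$ already provides. Axiom \hyperref[ax:arithmetics_a3]{A3} then closes the argument in one line.
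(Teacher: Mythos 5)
Your proof is correct and follows essentially the same route as the paper: both invoke Theorem~\ref{theorem:general_empty_theory_iliteral_sur} to obtain $\hyperref[ax:arithmetics_sur]{\mathrm{SUR}}$ and then use \hyperref[ax:arithmetics_a3]{A3} to compute $spx = s(p(sy)) = sy = x$. No differences worth noting.
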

\begin{proof}
    From Theorem~\ref{theorem:general_empty_theory_iliteral_sur} it follows that $T_3 + \mathrm{ILiteral} \vdash x = 0 \lor \exists y : x = sy$. Assume that $x \neq 0$. There is some $y$ s.t.\ $x = sy$. It follows that $x = sy \stackrel{\hyperref[ax:arithmetics_a3]{A3}}{=} s(psy) = (sp)sy = spx$
\end{proof}

\begin{lemma}
    $T_3 \vdash \mathrm{IAtom}$
\end{lemma}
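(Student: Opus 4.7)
The argument mirrors the lemma $T_2 \vdash \mathrm{IAtom}$ in Section~\ref{section:arithmetics_t2}, preceded by a normalisation step that handles the new predecessor symbol. Using $\hyperref[ax:arithmetics_a2]{A2}$ (so $p^b 0 = 0$ for every $b$) and $\hyperref[ax:arithmetics_a3]{A3}$ (so $p(s(t)) = t$ for any term $t$), a straightforward meta-induction on terms shows that every $\sL$-term is $T_3$-provably equal to one of the forms $s^a 0$ or $s^a p^b y$, where $y$ is either $x$ or a parameter $z_i$ and $a, b \geq 0$. Since the induction schemes attached to $T_3$-equivalent atoms are themselves $T_3$-equivalent, it suffices to establish $\hyperref[ax:arithmetics_induction]{\mathbf{I}(A)}$ for atoms $A(x, \bar z) \equiv t_1 = t_2$ whose two sides are already in this normal form.

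Fix a model $\M \vDash T_3$ and parameter values $\bar m \in \M$; from $\hyperref[ax:arithmetics_a1]{A1}$ and the derived $\hyperref[ax:arithmetics_a3a]{A3a}$ one obtains $s^k 0 \neq s^\ell 0$ whenever $k \neq \ell$. The proof then proceeds by the same three-way case split as for $T_2$. If $x$ occurs in neither side, $A(x,\bar m)$ does not depend on $x$ and induction is immediate. If $x$ occurs on exactly one side, say $t_1 = s^a p^b x$ with $t_2$ evaluating to some fixed $c \in \M$, a direct computation with $p(s(t)) = t$ and $p 0 = 0$ gives $t_1^\M(0) = s^a 0$ and $t_1^\M(s^{b+1} 0) = s^{a+1} 0$, two provably distinct elements; iterating the induction premise $b{+}1$ times would force $c$ to equal both, which is $T_3$-inconsistent, so the premise already fails and $\hyperref[ax:arithmetics_induction]{\mathbf{I}(A)}$ holds vacuously. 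If $x$ occurs on both sides, $t_1 = s^{a_1} p^{b_1} x$ and $t_2 = s^{a_2} p^{b_2} x$; evaluating the iterated premise at $x = 0$ forces $a_1 = a_2$, and at $x = s^{\max(b_1,b_2)} 0$ forces $a_1 - b_1 = a_2 - b_2$, hence $b_1 = b_2$, so $t_1$ and $t_2$ have the same normal form and $\forall x A(x, \bar m)$ holds trivially; when the normal forms differ, the premise is $T_3$-inconsistent and we again conclude vacuously.

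I expect the main technical obstacle to be the normalisation lemma: while the rewrites $p(s(t)) \mapsto t$ and $p(0) \mapsto 0$ are conceptually clear, one must carefully argue that they terminate on every term and that the resulting normal form has the uniform shape $s^a p^b y$ claimed above, including the special case in which the base $y$ is $0$ (where an additional meta-induction on $b$ using $\hyperref[ax:arithmetics_a2]{A2}$ collapses $s^a p^b 0$ to $s^a 0$). Once the normalisation is in place, the three-case analysis is a small numerical refinement of the $T_2$ proof, with $s^{b+1} 0$ and $s^{\max(b_1,b_2)} 0$ replacing $s 0$ to absorb the extra $p$'s in the normal form.
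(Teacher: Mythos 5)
Your proposal is correct and follows essentially the same route as the paper's proof: normalise both sides to the form $s^a p^b y$ (collapsing $s^a p^b 0$ to $s^a 0$ via \hyperref[ax:arithmetics_a2]{A2}), split on whether $x$ occurs in neither, one, or both sides, and refute $\hyperref[ax:arithmetics_lhs]{\mathbf{LHS}(A)}$ by evaluating at suitable standard numerals unless the atom is an identity. The only cosmetic difference is in the third case, where the paper first cancels common leading $s$'s so that one exponent is $0$, whereas you compare exponents directly by evaluating at $0$ and $s^{\max(b_1,b_2)}0$; both yield the same conclusion.
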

\begin{proof}
Take any model $\M$ of $T_3$ and any atom $A(x,z_1,z_2) \equiv t_1(x,z_1) = t_2(x,z_2)$.
Note that, by applying $\hyperref[ax:arithmetics_a3]{A3}$, we can assume that both $t_i$ have the
 form $s^{k_i} p^{m_i} y_i$ where $y_i$ is either $0$, $x$, or $z_i$.
We make a case distinction.

1.\ If $x$ occurs in neither $t_i$, then induction over $A(x,z_1,z_2)$ holds trivially.

2.\ If $x$ occurs in exactly one term, say $t_1$, then $A$ is $A(x,z_2) \equiv s^{k_1} p^{m_1} x = s^{k_2} p^{m_2} z_2$.
We have $\M \vDash t_1(0) = s^{k_1} 0$ and $\M \vDash t_1(s^{m_1 + 1} 0) = s^{k_1+1} 0$.
So, by $\hyperref[ax:arithmetics_a3]{A3}$ and $\hyperref[ax:arithmetics_a1]{A1}$,
 $\M \vDash t_1(0) \neq t_1(s^{m_1 + 1} 0)$.
Fix $m_2 \in \M$.
Then $\exists i\in \{0,\ldots,m_1 \}$ s.t.\ $\M \vDash A(s^i 0, m_2)$ and
 $\M \nvDash A(s^{i+1} 0, m_2)$.
So induction over $A(x,m_2)$ holds in $\M$.

3.\ If $x$ occurs on both sides, then $A$ is $A(x) \equiv s^{k_1} p^{m_1} x = s^{k_2} p^{m_2} x$.
Then, by applying $\hyperref[ax:arithmetics_a3]{A3}$, we can assume that one of the $k_i$ is $0$.
Let w.l.o.g.\ $k_1 = 0$.
If $k_2 \neq 0$, then $\M \nvDash A(0)$ and thus
 $\M \nvDash \hyperref[ax:arithmetics_lhs]{\mathbf{LHS}(A)}$.
Assume that $k_2 = 0$.
Then $\M \vDash A(0)$.
Let w.l.o.g.\ $m_1 < m_2$.
Then $\M \vDash t_1(s^{m_1 + 1} 0) \neq t_2(s^{m_1 + 1} 0)$ and thus
 $\M \nvDash \hyperref[ax:arithmetics_lhs]{\mathbf{LHS}(A)}$.
In the remaining case $m_1 = m_2$, $A$ is an identity and thus $\M \vDash \hyperref[ax:arithmetics_induction]{\mathbf{I}(A)}$.
\end{proof}

\begin{lemma}
    $T_3 \nvdash \mathrm{ILiteral}$
\end{lemma}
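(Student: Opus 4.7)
The plan is to separate $T_3$ from $T_3 + \mathrm{ILiteral}$ semantically, by exhibiting a model of $T_3$ in which some instance of $\mathrm{ILiteral}$ fails. The simplest witness will be the appropriate reduct of $\N_\infty$, which has already been analysed in Subsection on $\N_\infty$ and which contains the non-standard point $\infty$ with $s(\infty) = \infty$.

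Concretely, I would let $\N_\infty^*$ denote the reduct of $\N_\infty$ to the language $\sL = \{0, s, p\}$. First I would invoke Lemma~\ref{lemma:arithmetics_ninf_model} to conclude $\N_\infty \vDash T \supseteq T_3$, and hence $\N_\infty^* \vDash T_3$; the axioms $A1$, $A2$, $A3$ are verified directly from the definition of $\N_\infty$ on the $\{0,s,p\}$-fragment (in particular $p(s(\infty)) = p(\infty) = \infty$, so $A3$ survives the fixed point at infinity). Next, since $s^{\N_\infty}(\infty) = \infty$, the sentence $\hyperref[ax:arithmetics_bn]{B_1} \equiv \forall x\, (x \neq sx)$ fails in $\N_\infty^*$.

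Now, by Lemma~\ref{lemma:arithmetics_t3_iliteral_bn}, $T_3 + \mathrm{ILiteral} \vdash \hyperref[ax:arithmetics_bn]{B_1}$, so if $\N_\infty^* \vDash \mathrm{ILiteral}$ held, we would also have $\N_\infty^* \vDash B_1$, contradicting the previous step. Therefore $\N_\infty^* \nvDash \mathrm{ILiteral}$, which yields $T_3 \nvdash \mathrm{ILiteral}$ as required.

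There is no real obstacle here; the argument is completely parallel to Lemma~\ref{lem:T2_IAtom_ILiteral}, with $B_1$ playing the role that $B_n$ played in the injective-successor setting. The only point to be mildly careful about is checking that $A3$ still holds once the predecessor is included in the language, but this is immediate from the definition $p(\infty) = \infty$. As a by-product, this also establishes, in the spirit of Remarks~\ref{remark:t0_prindepresult} and~\ref{remark:t2_prindepresult}, that $T_3$ (and even $T_3 + \mathrm{IAtom}$, once the atomic lemma is in place) does not prove $B_1$, giving another practically relevant independence result for methods captured by atomic induction.
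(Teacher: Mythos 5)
Your argument is correct and is essentially the paper's: the paper simply cites Theorem~\ref{theorem:arithmetics_ninf_main}, whose proof uses the very same reduct of $\N_\infty$, so the only (cosmetic) difference is that you witness the failure of $\mathrm{ILiteral}$ via the parameter-free literal $x \neq sx$ (through $B_1$ and Lemma~\ref{lemma:arithmetics_t3_iliteral_bn}) rather than via the literal $x \neq z$ instantiated with $z := \infty$ as in Lemma~\ref{lemma:arithmetics_ninf_iliteral}. Note that the cited theorem already gives the slightly stronger statement $T_3 + \mathrm{IAtom} \nvdash \mathrm{ILiteral}$, which your closing remark also recovers.
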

\begin{proof}
    This follows directly from Theorem~\ref{theorem:arithmetics_ninf_main}.
\end{proof}

\section{Linear Arithmetic}\label{section:arithmetics_t4}

For this section, we fix the language $\sL = \{0,s,p,+\}$, define the theory $T_4$
 and the additional axioms $B2, B3, B4$.
\begin{definition}
    $T_4 = \{\hyperref[ax:arithmetics_a1]{A1},\hyperref[ax:arithmetics_a2]{A2}, \hyperref[ax:arithmetics_a3]{A3}, \hyperref[ax:arithmetics_a4]{A4}, \hyperref[ax:arithmetics_a5]{A5}\}$
\end{definition}
\begin{definition}
    We define the following axioms:
    \begin{flalign}
        & x+y = y+x \label{ax:arithmetics_b2}\tag*{$\mathbf{B2}$} && \\
        & (x+y)+z = x + (y+z) \label{ax:arithmetics_b3}\tag*{$\mathbf{B3}$} && \\
        & x+y = x+z \rightarrow y=z \label{ax:arithmetics_b4}\tag*{$\mathbf{B4}$} && 
    \end{flalign}
\end{definition}
In \cite{Shoenfield58Open} the following was shown:
\begin{theorem}
    $T_4 + \mathrm{IOpen}$ is equivalent to $T_4 + \{\hyperref[ax:arithmetics_b1]{B1},\hyperref[ax:arithmetics_b2]{B2},\hyperref[ax:arithmetics_b3]{B3},\hyperref[ax:arithmetics_b4]{B4}\}$
\end{theorem}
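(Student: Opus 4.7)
The plan is to establish the two inclusions separately. For the easy direction $T_4 + \mathrm{IOpen} \vdash \{B1, B2, B3, B4\}$, each axiom is obtained by induction on a suitable open formula. $B1$ follows from induction on the clause $x = 0 \lor x = spx$: the base case is $0 = 0$, and the step uses $A3$ to get $sx = s(psx) = spsx$. For $B2$ (commutativity) I first establish the auxiliary identities $0 + x = x$ and $sx + y = s(x + y)$ by inductions on $x$ and $y$ respectively, then combine them in an induction on $y$ together with $A4, A5$. $B3$ (associativity) is a direct induction on $z$ via $A5$, and $B4$ (cancellation) is an induction on $x$ using $A5$ and the injectivity of $s$ derived from $A3$.

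For the harder direction $T_4 + \{B1, B2, B3, B4\} \vdash \mathrm{IOpen}$, I argue model-theoretically. Let $\M \vDash T_4 + \{B1, B2, B3, B4\}$. An easy induction using $A5$, $B2$ and $B3$ yields $s^n x = x + s^n 0$ in $\M$; combined with $B4$ and $A1$ this derives the Shepherdson axioms $B_n$: $x \neq s^n x$ for every $n \geq 1$. Together with $B1$ and $A1$, every comparison class of the graph induced by $s$ is then either the standard copy of $\N$ (the class of $0$) or is order-isomorphic to $\Z$, and $(\M, +, 0)$ is a cancellative commutative monoid in which $s$ acts as addition of $s0$.

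Given this structural picture, I prove $\mathrm{IOpen}$ as follows. For any open formula $F(x, \overline{z})$, each atom in $F$ can be rewritten via $A3, A4, A5$ and $B2, B3$ into a normal form $n x + u_1(\overline{z}) = m x + u_2(\overline{z})$; using $B4$ cancellation one reduces further to an atom of shape $k x = v(\overline{z})$ with $k \geq 1$ or an atom not involving $x$ at all. For fixed parameter values in $\M$, such an atom is either identically true in $x$, identically false in $x$, or satisfied by a unique $x$. Hence $F(x, \overline{z})$ is eventually constant outside a finite set of critical values of $x$. Assuming $\mathrm{LHS}(F(x, \overline{z}))$, the formula $F(0, \overline{z})$ is true and $F$ is preserved by $s$; eventual constancy then forces $\forall x\, F(x, \overline{z})$.

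The main obstacle lies in the harder direction, specifically in the uniqueness-of-solution claim: showing that within $\M$, the atom $k x = v(\overline{z})$ with $k \geq 1$ has at most one solution. This amounts to establishing a torsion-freeness property, $k x = k y \to x = y$, in the cancellative commutative monoid $\M$. For the standard class this is immediate; for a non-standard $\Z$-class it follows by embedding the class into its Grothendieck group and using that addition of $s0$ generates a subgroup isomorphic to $\Z$. Carefully passing between comparison classes — in particular handling atoms whose parameter values lie in classes different from that of $x$ — constitutes the technical core of the argument.
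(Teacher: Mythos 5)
Your easy direction ($T_4 + \mathrm{IOpen} \vdash B1,\dots,B4$) is fine and matches the standard inductions (the paper reproves $B2$--$B4$ this way in Section~6). For the hard direction, note first that the paper itself offers no proof of this theorem --- it is quoted from Shoenfield --- so the comparison is with the standard argument. Against that standard, your sketch has two genuine gaps. The lemma you single out as the technical core, namely $kx = ky \rightarrow x = y$ in models of $T_4 + \{B1,B2,B3,B4\}$, is \emph{false}. Counterexample: inside the group $\Z\times\Z\times(\Z/2\Z)$ take the submonoid $M = \{(a,0,0) : a \geq 0\} \cup \{(a,b,c) : b \geq 1\}$ with $s0 = (1,0,0)$ and $p$ the partial inverse of $s$ (with $p0=0$). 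This satisfies $T_4 + \{B1,B2,B3,B4\}$, yet $x=(0,1,1)$ and $y=(0,1,0)$ satisfy $x+x=y+y$ and $x \neq y$. Your Grothendieck-group argument only shows that the subgroup generated by $s0$ is infinite cyclic, which yields the weaker --- but true, and sufficient --- statement that an atom $kx + u_1 = u_2$ with $k\geq 1$ has at most one solution \emph{in each comparison class}: two solutions in one class differ by $j\cdot s0$, and $B4$ plus $A1$ force $kj=0$. (Relatedly, you cannot rewrite $kx+u_1=u_2$ as $kx=v(\overline{z})$ in a monoid, and terms containing $p$, such as $p(x+z)$, agree with a linear normal form only away from the points where a subterm evaluates to $0$; these exceptional points must be shown to be finitely many per class.)

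The more serious gap is the final inference. From ``$F(x,\overline{a})$ is constant outside finitely many $x$ per comparison class'', $F(0,\overline{a})$, and closure of $F$ under $s$, you may conclude $\forall x\, F$ only on the comparison class of $0$. On a non-standard $\Z$-class the eventual value of $F$ could a priori be false --- indeed ``$F$ false everywhere on a class'' is perfectly compatible with $F(x)\rightarrow F(sx)$ --- so eventual constancy alone does not force $\forall x\, F$. The missing ingredient is a transfer principle across comparison classes: after normalization each atom is either independent of $x$ (hence has the same truth value on every class) or holds at most once per class (hence is generically false on every class), so the generic truth value of $F$ is the \emph{same} on all classes. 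Since $F$ holds on cofinitely many standard elements, that common generic value is true; therefore on each non-standard $\Z$-class $C$ the set $\{x \in C : \neg F(x)\}$ is finite and closed under predecessor, hence empty. This cross-class step is exactly the analogue of the $I_d$-congruence argument the paper uses for $T_5$, and without it (or some substitute) your proof of $\mathrm{IOpen}$ does not go through.
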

We can strengthen this result as follows:
\begin{theorem}
    \begin{align*}
        T_4 &\precneq T_4 + \mathrm{IAtom} \\
        &\precneq T_4 + \mathrm{ILiteral} \\
        &\approx T_4 + \mathrm{IOpen} \\
        &\approx T_ 4 +  \{\hyperref[ax:arithmetics_b1]{B1},\hyperref[ax:arithmetics_b2]{B2},\hyperref[ax:arithmetics_b3]{B3},\hyperref[ax:arithmetics_b4]{B4}\}
    \end{align*}
    This yields the Hasse diagram:
    \begin{center}
        \begin{tikzpicture}
            \node (Base) at (0,-2) {$T_4$};
            \node (IAtom) at (0,0) {$T_4 + \mathrm{IAtom}$};
            \node (ILiteral) at (0,2) {$T_4 + \mathrm{ILiteral} \approx T_4 + \mathrm{IOpen}$};
            \draw (Base) -- (IAtom) -- (ILiteral);
        \end{tikzpicture}
    \end{center}
\end{theorem}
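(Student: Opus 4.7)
The Shoenfield equivalence $T_4 + \mathrm{IOpen} \approx T_4 + \{B1,B2,B3,B4\}$ is stated above, so the real content of the theorem is three assertions: (i) $T_4 \precneq T_4 + \mathrm{IAtom}$, (ii) $T_4 + \mathrm{IAtom} \precneq T_4 + \mathrm{ILiteral}$, and (iii) $T_4 + \mathrm{ILiteral} \vdash \{B1,B2,B3,B4\}$, which combined with the trivial inclusion $\mathrm{ILiteral} \subseteq \mathrm{IOpen}$ yields $T_4 + \mathrm{ILiteral} \approx T_4 + \mathrm{IOpen}$.

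For (iii) I would first derive the auxiliary identities $0+x=x$ and $sx+y=s(x+y)$ by two atomic inductions (on $x$ and on $y$ respectively) using only $A4$ and $A5$. From these, commutativity $B2$ follows by an atomic induction on $y$ of the atom $x+y = y+x$, and associativity $B3$ by an atomic induction on $z$ of the atom $(x+y)+z = x+(y+z)$; both are the standard textbook derivations and require only $\mathrm{IAtom}$. The axiom $B1$ is obtained in $T_4 + \mathrm{ILiteral}$ by invoking Theorem~\ref{theorem:general_empty_theory_iliteral_sur} to get $\mathrm{SUR}$ and then applying $A3$, mirroring Lemma~\ref{lemma:arithmetics_t3_iliteral_b1}. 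The essentially new step is $B4$: fix $y,z$, consider the literal $L(x) \equiv x+y \neq x+z$, observe that $L(0)$ is equivalent to $y \neq z$ via $0+x=x$ and that $L(x) \rightarrow L(sx)$ follows from $sx+y = s(x+y)$ together with $A3a$; literal induction on $x$ then yields $y \neq z \rightarrow \forall x: x+y \neq x+z$, which by contraposition is $B4$.

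For (ii) I would use the reduct of $\N_\infty$ to the language $\{0,s,p,+\}$: it satisfies $T_4$ by Lemma~\ref{lemma:arithmetics_ninf_model} and $\mathrm{IAtom}$ by Lemma~\ref{lemma:arithmetics_ninf_iatom}, but refutes $\mathrm{ILiteral}$ by Lemma~\ref{lemma:arithmetics_ninf_iliteral}. For (i) the model $\N_{a,b}$ is a model of $T \supseteq T_4$ but refutes $B2$ by Lemma~\ref{lemma:arithmetics_nab_plus_non_comm}; since the first step of (iii) shows $T_4 + \mathrm{IAtom} \vdash B2$, this model separates $T_4$ from $T_4 + \mathrm{IAtom}$.

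The main obstacle is the derivation of $B4$ from $T_4 + \mathrm{ILiteral}$: it requires the two preparatory atomic inductions establishing $0+x=x$ and $sx+y=s(x+y)$ to be in place before the literal induction on the negated atom $x+y \neq x+z$ can be set up. The remaining ingredients are either routine atomic inductions, a short appeal to $\mathrm{SUR}$, or a direct application of a model already constructed earlier in the paper.
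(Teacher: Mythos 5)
Your proposal is correct and follows essentially the same route as the paper: atomic inductions for $B2$ and $B3$, a literal induction on $x+y \neq x+z$ for $B4$, SUR for $B1$, the reduct of $\N_\infty$ for the second separation, and $\N_{a,b}$ against commutativity for the first. The only (immaterial) difference is that in the inductive step for $B4$ you invoke the auxiliary identity $sx+y = s(x+y)$ directly, whereas the paper routes the same computation through the already-established commutativity $B2$.
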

\begin{lemma}
\label{lemma:arithmetics_t4_b1}
    $T_4 + \mathrm{ILiteral} \vdash \hyperref[ax:arithmetics_b1]{B1}$
\end{lemma}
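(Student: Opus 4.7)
The plan is to reuse essentially verbatim the argument of Lemma~\ref{lemma:arithmetics_t3_iliteral_b1}. The only ingredients used there are the axiom $\hyperref[ax:arithmetics_a3]{A3}$ together with the scheme $\hyperref[ax:arithmetics_sur]{\mathrm{SUR}}$, and both are available in $T_4+\mathrm{ILiteral}$: the axiom $\hyperref[ax:arithmetics_a3]{A3}$ is explicitly in $T_4$, and $\hyperref[ax:arithmetics_sur]{\mathrm{SUR}}$ follows from $\mathrm{ILiteral}$ by Theorem~\ref{theorem:general_empty_theory_iliteral_sur}, which is applicable since the language $\{0,s,p,+\}$ of $T_4$ extends $\{0,s\}$.

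Concretely, I would work in $T_4+\mathrm{ILiteral}$ and apply Theorem~\ref{theorem:general_empty_theory_iliteral_sur} to obtain $x=0\lor\exists y:x=sy$. Assuming the antecedent $x\neq 0$ of $\hyperref[ax:arithmetics_b1]{B1}$, fix a witness $y$ with $x=sy$. Then by $\hyperref[ax:arithmetics_a3]{A3}$ we have $px=p(sy)=y$, so $spx=sy=x$, which is exactly $\hyperref[ax:arithmetics_b1]{B1}$.

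I do not anticipate any real obstacle. The new function symbol $+$ and the axioms $\hyperref[ax:arithmetics_a4]{A4},\hyperref[ax:arithmetics_a5]{A5}$ of $T_4$ play no role in the argument, which is purely about $s$ and $p$. The only subtle point worth noting is that expanding the language from $\{0,s,p\}$ to $\{0,s,p,+\}$ only enlarges the class of literals available for induction, so none of the induction strength used in Lemma~\ref{lemma:arithmetics_t3_iliteral_b1} is lost when passing from $T_3$ to $T_4$.
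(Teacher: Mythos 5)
Your proposal is correct and matches the paper's approach: the paper proves this lemma in one line by citing Lemma~\ref{lemma:arithmetics_t3_iliteral_b1} together with the fact that $T_4 \supseteq T_3$, which is exactly the transfer argument you spell out (including the correct observation that enlarging the language only enlarges the set of available induction literals).
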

\begin{proof}
    This follows directly from Lemma~\ref{lemma:arithmetics_t3_iliteral_b1} and the fact that $T_4$ is a superset of $T_3$.
\end{proof}

\begin{lemma}
\label{lemma:arithmetics_t4_b2}
    $T_4 + \mathrm{IAtom} \vdash \hyperref[ax:arithmetics_b2]{B2}$.
\end{lemma}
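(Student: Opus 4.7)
The plan is to carry out the standard three-step derivation of commutativity of addition from the recursion axioms, taking care that at each step the induction formula is an atom so that $\mathrm{IAtom}$ is applicable.

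First I would prove the auxiliary identity $0 + y = y$ by induction on $y$ using the atom $A_1(y) \equiv 0 + y = y$. The base case $0 + 0 = 0$ is immediate from \hyperref[ax:arithmetics_a4]{A4}, and the step case $0 + sy = s(0+y)$ follows from \hyperref[ax:arithmetics_a5]{A5} so that the induction hypothesis yields $0 + sy = sy$.

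Next I would prove $sx + y = s(x+y)$ by induction on $y$ using the atom $A_2(y, x) \equiv sx + y = s(x+y)$ with $x$ as parameter. The base case $sx + 0 = s(x+0)$ follows from two applications of \hyperref[ax:arithmetics_a4]{A4}; the step case rewrites $sx + sy = s(sx + y) = s(s(x+y)) = s(x + sy)$ using \hyperref[ax:arithmetics_a5]{A5} on both sides together with the induction hypothesis.

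Finally I would prove $x + y = y + x$ by induction on $x$ using the atom $A_3(x, y) \equiv x + y = y + x$ with $y$ as parameter. The base case reduces to the first step combined with \hyperref[ax:arithmetics_a4]{A4}, and the step case chains together the second step, \hyperref[ax:arithmetics_a5]{A5}, and the induction hypothesis: $sx + y = s(x + y) = s(y + x) = y + sx$.

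Since all three induction formulas $A_1$, $A_2$, $A_3$ are equations between $\sL$-terms, each invocation of induction is an instance of $\mathrm{IAtom}$, so the whole derivation goes through in $T_4 + \mathrm{IAtom}$. There is no real obstacle here; the only thing worth being pedantic about is verifying that the intermediate lemmas $0 + y = y$ and $sx + y = s(x+y)$ really are atoms (and not, say, implications hidden in the meta-level phrasing), which they manifestly are.
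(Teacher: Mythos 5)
Your proof is correct and follows essentially the same route as the paper: the paper's proof is precisely the three-step induction on the atoms $0+x=x$, then $sy+x=s(y+x)$, then $x+y=y+x$, which you have simply spelled out in more detail. Nothing to add.
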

\begin{proof}
    First, apply induction on the atom $A_1(x) \equiv 0+x = x$. Then, make an induction on $x$ on the atom $A_2(x) \equiv sy + x = s(y+x)$. Lastly, apply induction on $x$ on the atom $A_3(x) \equiv x + y = y+x$.
\end{proof}

\begin{lemma}
\label{lemma:arithmetics_t4_b3}
    $T_4 + \mathrm{IAtom} \vdash \hyperref[ax:arithmetics_b3]{B3}$.
\end{lemma}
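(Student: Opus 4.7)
The plan is to prove associativity by a single atomic induction on the rightmost summand, using the atom
\[
A(z, x, y) \;\equiv\; (x+y)+z = x+(y+z),
\]
with $x,y$ as parameters. This is structurally simpler than the proof of \hyperref[ax:arithmetics_b2]{B2} because we do not need any auxiliary lemma: the two recursive equations \hyperref[ax:arithmetics_a4]{A4} and \hyperref[ax:arithmetics_a5]{A5} for $+$ recurse on the right argument, which is exactly the argument on which we induct.

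For the base case $A(0,x,y)$, both \hyperref[ax:arithmetics_a4]{A4}-applications give $(x+y)+0 = x+y = x+(y+0)$. For the step, assuming $A(z,x,y)$, I would compute
\[
(x+y)+sz \;\stackrel{\textup{A5}}{=}\; s((x+y)+z) \;\stackrel{\textup{IH}}{=}\; s(x+(y+z)) \;\stackrel{\textup{A5}}{=}\; x+s(y+z) \;\stackrel{\textup{A5}}{=}\; x+(y+sz),
\]
so $T_4 \vdash \hyperref[ax:arithmetics_lhs]{\mathbf{LHS}(A(z,x,y))}$. Applying $\mathbf{I}(A)$, which is an instance of $\mathrm{IAtom}$ since $A$ is an atom, yields $\forall z\, A(z,x,y)$, i.e., \hyperref[ax:arithmetics_b3]{B3}.

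There is no real obstacle here: the only subtlety worth noting is that the induction must be performed on $z$ (the right-hand summand), not on $x$ or $y$, because the defining equations of $+$ only allow one to unfold the right argument of $+$. Performing induction on $x$ or $y$ would require commutativity or a non-atomic induction formula, and hence would not fit into $\mathrm{IAtom}$ over $T_4$.
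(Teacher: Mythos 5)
Your proof is correct and follows exactly the paper's approach: a single atomic induction on $z$ over the atom $(x+y)+z = x+(y+z)$, with the base case by \hyperref[ax:arithmetics_a4]{A4} and the step by \hyperref[ax:arithmetics_a5]{A5}. The paper states this in one line; you have merely spelled out the computation.
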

\begin{proof}
    Consider the atom $A(x,y,z) \equiv (x+y)+z = x+(y+z)$. Applying induction on $z$ on $A$ yields $\hyperref[ax:arithmetics_b3]{B3}$.
\end{proof}

\begin{lemma}
\label{lemma:arithemetics_t4_b4}
    $T_4 + \mathrm{ILiteral} \vdash \hyperref[ax:arithmetics_b4]{B4}$
\end{lemma}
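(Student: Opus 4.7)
The plan is to establish $B4$ in its contrapositive form: for parameters $y,z$, I will derive $y \neq z \rightarrow \forall x\,(x+y \neq x+z)$, which is logically equivalent to $B4$ once we universally quantify $x, y, z$. This will be done by invoking $\hyperref[ax:arithmetics_induction]{\mathbf{I}(L)}$ on the single literal $L(x,y,z) \equiv x+y \neq x+z$, with $y$ and $z$ playing the role of parameters — the decisive point being that, kept this way, the induction formula never leaves the class of literals.

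For the base case $L(0,y,z)$, commutativity $B2$ (from Lemma~\ref{lemma:arithmetics_t4_b2}) together with $\hyperref[ax:arithmetics_a4]{A4}$ gives $0+y = y+0 = y$ and likewise $0+z = z$, so the hypothesis $y \neq z$ immediately yields $0+y \neq 0+z$. For the induction step, $B2$ together with $\hyperref[ax:arithmetics_a5]{A5}$ gives $sx+y = y+sx = s(y+x) = s(x+y)$, and analogously $sx+z = s(x+z)$. Assuming $L(x,y,z)$, the injectivity of $s$ — i.e.\ $\hyperref[ax:arithmetics_a3a]{A3a}$, derivable from $\hyperref[ax:arithmetics_a3]{A3}$ — lifts $x+y \neq x+z$ to $s(x+y) \neq s(x+z)$, which is $L(sx,y,z)$.

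Applying $\hyperref[ax:arithmetics_induction]{\mathbf{I}(L)}$ then produces $\forall x\,L(x,y,z)$ under the hypothesis $y \neq z$, and contraposition delivers $B4$. I do not anticipate any real obstacle: the argument is just the textbook proof of additive cancellation, and all its ingredients ($B2$, $\hyperref[ax:arithmetics_a4]{A4}$, $\hyperref[ax:arithmetics_a5]{A5}$, and injectivity of $s$) are already available in $T_4 + \mathrm{ILiteral}$. The only conceptual subtlety is the bookkeeping that lets us keep the induction formula literal-sized, so that $\mathrm{ILiteral}$ — rather than any clause-level scheme — genuinely suffices.
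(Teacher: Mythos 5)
Your proposal is correct and is essentially the paper's own proof: the paper likewise assumes $y \neq z$, performs $\mathrm{ILiteral}$-induction on $L(x) \equiv x+y \neq x+z$ with $y,z$ as parameters, and uses $\hyperref[ax:arithmetics_b2]{B2}$ (via Lemma~\ref{lemma:arithmetics_t4_b2}), $\hyperref[ax:arithmetics_a4]{A4}$, $\hyperref[ax:arithmetics_a5]{A5}$, and $\hyperref[ax:arithmetics_a3a]{A3a}$ in exactly the same places. No gaps.
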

\begin{proof}
    We work in $T_4 + \mathrm{ILiteral}$. Note that, by Lemma~\ref{lemma:arithmetics_t4_b2}, we can use commutativity.
    
    Assume that $y \neq z$ and consider the literal $L(x) \equiv x +y \neq x+z$. $L(0)$ holds because $0+y \stackrel{\hyperref[ax:arithmetics_b2]{B2}}{=} y+0 \stackrel{\hyperref[ax:arithmetics_a4]{A4}}{=} y \neq z \stackrel{\hyperref[ax:arithmetics_a4]{A4}}{=} z+0 \stackrel{\hyperref[ax:arithmetics_b2]{B2}}{=} 0+z$. Assume that $L(x)$ holds and $x+y \neq x+z$. Then $sx + y \stackrel{\hyperref[ax:arithmetics_b2]{B2}}{=} y+sx \stackrel{\hyperref[ax:arithmetics_a5]{A5}}{=} s(y+x) \stackrel{\hyperref[ax:arithmetics_b2]{B2}}{=} s(x+y) \stackrel{\hyperref[ax:arithmetics_a3a]{A3a}}{\neq} s(x+z) \stackrel{\hyperref[ax:arithmetics_b2]{B2}}{=} s(z+x) \stackrel{\hyperref[ax:arithmetics_a5]{A5}}{=} z+sx \stackrel{\hyperref[ax:arithmetics_b2]{B2}}{=} sx + z$.
    By induction over $L$, we obtain $\forall x : x +y \neq x+z$.
\end{proof}

\begin{lemma}
    $T_4 \nvdash \mathrm{IAtom}$
\end{lemma}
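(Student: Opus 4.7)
The plan is to exhibit a model of $T_4$ in which $\mathrm{IAtom}$ fails, and the model $\N_{a,b}$ is the natural candidate. Specifically, I would take the reduct of $\N_{a,b}$ to the language $\sL = \{0,s,p,+\}$ (since the full $\N_{a,b}$ was defined over $\{0,s,p,+,\cdot\}$). This reduct satisfies $T_4$ because $T_4 \subseteq T = \{A1,\dots,A7\}$ and $\N_{a,b}\vDash T$ by the lemma following Definition~\ref{definition:arithmetics_nab}.

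To show $\mathrm{IAtom}$ fails, I would invoke the two earlier results:
\begin{itemize}
\item By Lemma~\ref{lemma:arithmetics_nab_plus_non_comm}, $\N_{a,b}\nvDash x+y = y+x$, i.e.\ $\N_{a,b}\nvDash \hyperref[ax:arithmetics_b2]{B2}$.
\item By Lemma~\ref{lemma:arithmetics_t4_b2}, $T_4 + \mathrm{IAtom}\vdash \hyperref[ax:arithmetics_b2]{B2}$.
\end{itemize}
Combining these, the reduct of $\N_{a,b}$ witnesses $T_4 \nvdash \mathrm{IAtom}$.

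There is really no obstacle here: the work was done in constructing $\N_{a,b}$ with its non-commutative addition and in proving Lemma~\ref{lemma:arithmetics_t4_b2}. The only thing worth a sentence in the actual write-up is the explicit remark that we are using the reduct of $\N_{a,b}$ to $\{0,s,p,+\}$, since $\mathrm{IAtom}$ here is the schema in the smaller language $\sL$ fixed at the beginning of Section~\ref{section:arithmetics_t4}.
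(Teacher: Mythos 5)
Your proposal is correct and is essentially identical to the paper's own proof: the paper also combines Lemma~\ref{lemma:arithmetics_t4_b2} ($T_4 + \mathrm{IAtom}\vdash \hyperref[ax:arithmetics_b2]{B2}$) with Lemma~\ref{lemma:arithmetics_nab_plus_non_comm} and the appropriate reduct of $\N_{a,b}$ to the language $\{0,s,p,+\}$. Your extra remark about taking the reduct explicitly is a sensible clarification but not a new idea.
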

\begin{proof}
    By Lemma~\ref{lemma:arithmetics_t4_b2}, it suffices to give a model of $T_4$, in which $+$ is not commutative.
    The claim now follows directly from Lemma~\ref{lemma:arithmetics_nab_plus_non_comm} if we take the appropriate reduct of $\N_{a,b}$ from Definition~\ref{definition:arithmetics_nab}.
\end{proof}
\begin{lemma}
    $T_4 + \mathrm{IAtom} \nvdash \mathrm{ILiteral}$
\end{lemma}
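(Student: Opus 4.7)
The plan is essentially a one-line reduction to \texttt{Theorem~\ref{theorem:arithmetics_ninf_main}}, which was already set up precisely for this kind of separation. Observe that the current language $\sL = \{0,s,p,+\}$ is a subset of the full language $\{0,s,p,+,\cdot\}$ of the preceding subsection, and that the base theory $T_4 = \{A1, A2, A3, A4, A5\}$ is a subset of the theory $T = \{A1, \ldots, A7\}$ used there. Hence the hypotheses of \texttt{Theorem~\ref{theorem:arithmetics_ninf_main}} are satisfied with $T' := T_4$ and $\sL' := \{0,s,p,+\}$, and the theorem gives $T_4 + \mathrm{IAtom} \nvdash \mathrm{ILiteral}$ immediately.

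If one wants to unfold the argument, the witness is the reduct $\N_\infty^*$ of $\N_\infty$ to $\{0,s,p,+\}$. First, $\N_\infty^* \vDash T_4$ by \texttt{Lemma~\ref{lemma:arithmetics_ninf_model}} (the relevant axioms $A1$--$A5$ do not mention $\cdot$, so taking the reduct is harmless). Second, $\N_\infty^* \vDash \mathrm{IAtom}$ by \texttt{Lemma~\ref{lemma:arithmetics_ninf_iatom}}, which was stated for arbitrary reducts containing $0$ and $s$. Third, $\N_\infty^* \nvDash \mathrm{ILiteral}$ by \texttt{Lemma~\ref{lemma:arithmetics_ninf_iliteral}}, since the separating literal $x \neq z$ uses no symbols from the language beyond $=$ and works in any such reduct.

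There is no real obstacle here: all the work has been front-loaded into the analysis of $\N_\infty$. The only point worth double-checking is the language compatibility (that $T_4$ really is a sub-theory of $T$ in a sub-language of $\sL$), which is immediate by inspection of the axioms $A1$--$A5$.
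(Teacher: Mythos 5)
Your proposal is correct and matches the paper's proof exactly: the paper also derives this lemma directly from Theorem~\ref{theorem:arithmetics_ninf_main}, using that $T_4$ is a subtheory of $T$ over a sublanguage of $\sL$. Your unfolding of the argument via the reduct of $\N_\infty$ is just the proof of that theorem spelled out again, so there is nothing to add.
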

\begin{proof}
    This follows directly from Theorem~\ref{theorem:arithmetics_ninf_main}.
\end{proof}

\section{Polynomials}\label{section:arithmetics_t5}

In this section, we will often consider (commutative) rings.
All these rings contain an identity, so we do not mention this explicitly every time.
The identity element will always be named $1$.
For this section, we fix the language $\sL = \{0,s,p,+,\cdot\}$, define the theory $T_5$,
 and the additional axioms $B5$, $B6$, $B7$, and $C'_d$.
\begin{definition}
    $T_5 = \{\hyperref[ax:arithmetics_a1]{A1},\ldots, \hyperref[ax:arithmetics_a7]{A7}\}$
\end{definition}
\begin{definition}
    We define the following axioms:
    \begin{flalign}
        & xy = yx \label{ax:arithmetics_b5}\tag*{$\mathbf{B5}$} && \\
        & x(yz) = (xy)z \label{ax:arithmetics_b6}\tag*{$\mathbf{B6}$} && \\
        & x(y+z) = xy+xz \rightarrow y=z \label{ax:arithmetics_b7}\tag*{$\mathbf{B7}$} && \\
        & dy = dz \rightarrow \bigvee_{i=0}^{d-1} (x+i)y = (x+i)z \text{ for any } d=2,3, \dots \label{ax:arithmetics_cd}\tag*{$\mathbf{C'_d}$} && 
    \end{flalign}
\end{definition}
The axioms $\hyperref[ax:arithmetics_cd]{C'_d}$ are not quite intuitive, but they can be
 understood as a very weak form of multiplicative cancellation.
Clearly, $\N \vDash (x \neq 0 \land xy = xz) \rightarrow y = z$, but
 $T_5 + \mathrm{IOpen} \nvdash (x \neq 0 \land xy = xz) \rightarrow y = z$.
This is a consequence of the following: In \cite{Shepherdson67Rule} it was shown that
 $T_5 + \{\hyperref[ax:arithmetics_b1]{B1},\ldots, \hyperref[ax:arithmetics_b7]{B7}\} + \{\hyperref[ax:arithmetics_cd]{C'_d} \mid d \in \N, d \geq 2\} \approx T_5 + IOpen$ and
 in \cite{Shepherdson65Nonstandard} it was shown that $T_5 +
 \{\hyperref[ax:arithmetics_b1]{B1} ,\ldots, \hyperref[ax:arithmetics_b7]{B7}\} + \{\hyperref[ax:arithmetics_cd]{C'_d} \mid d \in \N, d \geq 2\} \nvdash (x = 0 \land xy = xz) \rightarrow y=z$. However, as we will see below, $T_5 + IOpen \vdash \hyperref[ax:arithmetics_cd]{C'_d}$ for any $d \geq 2$.

The following result was postulated in \cite{Shepherdson65Nonstandard} and proven in \cite{Shepherdson67Rule}.
\begin{theorem}
\label{theorem:arithmetics_t5_shep}
    $T_5 + \mathrm{IOpen}$ is equivalent to $T_5 + \{\hyperref[ax:arithmetics_b1]{B1},\ldots, \hyperref[ax:arithmetics_b7]{B7}\} + \{C'_d \mid d \in \N, d \geq 2\}$.
\end{theorem}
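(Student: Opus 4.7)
The plan is to prove the two inclusions independently. For brevity, write $T^{\star}$ for $T_5 + \{B1,\ldots,B7\} + \{C'_d \mid d \geq 2\}$.

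For the inclusion $T_5 + \mathrm{IOpen} \vdash T^{\star}$, each axiom would be derived by a single induction on an appropriate open formula, in the spirit of the preceding sections. Axioms $B1$--$B4$ are proven as in Section~\ref{section:arithmetics_t4}. The axioms $B5$ (commutativity of $\cdot$) and $B6$ (associativity of $\cdot$) follow by atomic inductions analogous to those for $B2$ and $B3$, and $B7$ follows by a literal induction analogous to $B4$. For $C'_d$, the consequent is itself a clause in $x$, and the induction step can be verified using $B5$--$B7$ together with the hypothesis $dy=dz$: if the witness at $x$ is an index $i \geq 1$, then $(sx + (i-1))y = (sx + (i-1))z$ witnesses the same disjunction at $sx$; if the witness is $i=0$, i.e.\ $xy=xz$, then $(x+d)y = xy + dy = xz + dz = (x+d)z$, so $sx + (d-1)$ witnesses the disjunction at $sx$.

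For the inclusion $T^{\star} \vdash \mathrm{IOpen}$, I would proceed model-theoretically, following Shepherdson's approach in~\cite{Shepherdson67Rule}. Given any $\M \vDash T^{\star}$, use $B1$--$B7$ to construct the Grothendieck-style ring $R(\M)$ of formal differences $a-b$ modulo the equivalence $(a,b) \sim (c,d) \iff a + d = b + c$; this is a commutative ring into which $\M$ embeds as a sub-semiring. The axioms $C'_d$ then impose a weak divisibility-by-$d$ condition on $R(\M)$. To verify $\mathrm{IOpen}$ in $\M$, assume for contradiction that there are an open $F(x,\overline{z})$ and parameters $\overline{m} \in \M$ such that $\M \vDash \mathrm{LHS}(F(x,\overline{m}))$ but $\M \nvDash \forall x\, F(x,\overline{m})$. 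Rewriting $F$ in disjunctive normal form and translating atoms into equalities $P(x) = 0$ for polynomials $P \in R(\M)[x]$, the truth set of $F$ in $\M$ becomes a Boolean combination of the $\M$-root sets of finitely many such polynomials. The goal is then to show that each nonzero such polynomial has only finitely many roots in $\M$, so that the truth set of $F$ in $\M$ has only finitely many ``break points'', which contradicts the coexistence of $F(0)$, $\forall x(F(x) \to F(sx))$, and a failure point.

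The main obstacle will be this polynomial root bound in $R(\M)$: since $R(\M)$ need not be an integral domain, the standard fact that a polynomial of degree $n$ has at most $n$ roots is not available. The axioms $C'_d$ are precisely the replacement for this missing ingredient, and the delicate step is to leverage them, together with the additive cancellation $B4$, to control the roots of the polynomials arising from open atoms. This combinatorial ring-theoretic argument is the technical core of Shepherdson's proof in~\cite{Shepherdson67Rule}, and is what makes the equivalence nontrivial.
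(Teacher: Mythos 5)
The paper does not prove this theorem at all: it is imported as an external result, postulated in \cite{Shepherdson65Nonstandard} and proven in \cite{Shepherdson67Rule}, so there is no in-paper proof to compare against. Your forward inclusion is essentially correct and in fact coincides with what the paper establishes in refined form later on: Lemma~\ref{lemma:arithmetics_t5_iatom_b2} derives $B2,B3,B5,B6,B7$ already from $\mathrm{IAtom}$, $B1$ and $B4$ follow from $\mathrm{ILiteral}$ via Section~\ref{section:arithmetics_t4}, and your induction step for $C'_d$ is the same computation as in Lemma~\ref{lemma:arithmetics_t5_iclause_cd}.

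The backward inclusion, however, is where the content of the theorem lies, and your proposal does not prove it; it only names the difficulty and defers it to \cite{Shepherdson67Rule}. Moreover, the strategy you state --- ``show that each nonzero such polynomial has only finitely many roots in $\M$'' --- is not merely hard, it is false in these models. A nonzero polynomial over the difference ring $R(\M)$ can have infinitely many roots: by Shepherdson's structure theory (quoted in the paper as Lemma~\ref{lemma:arithmetics_t5_union_of_equiv_classes}), a polynomial of degree $n$ with more than $n$ roots in a single comparison class has as root set a union of cosets of an ideal $I_d$, and such cosets are in general infinite. The argument therefore cannot run through a finiteness bound on ``break points''; it must exploit the coset structure of root sets together with Lemma~\ref{lemma:arithmetics_t5_d_equiv_classes} (every element is congruent to some $k\in\{0,\dots,d-1\}$ modulo $I_d$) to transfer truth between the standard part and arbitrary comparison classes --- exactly the mechanism the paper later uses to obtain $\mathrm{ILiteral}$ and $\mathrm{IDClause}$ from $B1$--$B7$ alone. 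Note also that this coset machinery is available already without the $C'_d$; the specific role of $C'_d$ is to handle the disjunctive (clausal) part of an open formula, not to restore a root bound. So the proposal is a sound roadmap for the easy inclusion but leaves the hard inclusion unproven and aims it in a direction that would not work as stated.
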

We can strengthen this result:
\begin{theorem}
    \begin{align*}
        T_5 &\precneq T_5 + \mathrm{IAtom} \\
        &\precneq T_5 + \mathrm{ILiteral} \\
        &\approx T_5 + \mathrm{IDClause}\\ 
        &\approx T_5 + \{\hyperref[ax:arithmetics_b1]{B1},\ldots, \hyperref[ax:arithmetics_b7]{B7}\} \\
        &\precneq T_5 + \mathrm{IClause} \\
        &\approx T_5 + \mathrm{IOpen} \\
        &\approx T_5 + \{\hyperref[ax:arithmetics_b1]{B1},\ldots, \hyperref[ax:arithmetics_b7]{B7}\} + \{\hyperref[ax:arithmetics_cd]{C'_d} \mid d \in \N, d \geq 2\}
    \end{align*}
    This yields the following Hasse diagram:
    \begin{center}
        \begin{tikzpicture}
            \node (Base) at (0,-2) {$T_5$};
            \node (IAtom) at (0,0) {$T_5 + \mathrm{IAtom}$};
            \node (ILiteral) at (0,2) {$T_5 + \mathrm{ILiteral} \approx T_5 + \mathrm{IDClause}$};
            \node (IClause) at (0,4) {$T_5 + \mathrm{IClause} \approx T_5 + \mathrm{IOpen}$};
            \draw (Base) -- (IAtom) -- (ILiteral) -- (IClause);
        \end{tikzpicture}
    \end{center}
\end{theorem}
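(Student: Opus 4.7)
My plan is to verify each link in the chain stepwise, leveraging the framework developed in the previous sections. The two bottom strictness inclusions go exactly as before: $T_5 \preceq T_5 + \mathrm{IAtom}$ is trivial, and strictness is witnessed by $\N_{a,b}$, which satisfies $T_5$ but fails $B_2$ (Lemma~\ref{lemma:arithmetics_nab_plus_non_comm}) while $T_5 + \mathrm{IAtom} \vdash B_2$ by the same three successive atomic inductions as in Lemma~\ref{lemma:arithmetics_t4_b2}. Then $T_5 + \mathrm{IAtom} \precneq T_5 + \mathrm{ILiteral}$ is immediate from Theorem~\ref{theorem:arithmetics_ninf_main} applied to the reduct of $\N_\infty$ at the full language.

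For the middle block $T_5 + \mathrm{ILiteral} \approx T_5 + \mathrm{IDClause} \approx T_5 + \{B_1,\ldots,B_7\}$, one direction is routine: $T_5 + \mathrm{ILiteral} \vdash B_1,\ldots,B_7$ by collecting the corresponding proofs from Sections~\ref{section:arithmetics_t2}--\ref{section:arithmetics_t4} for $B_1$--$B_4$, plus three new but entirely analogous atomic inductions for $B_5,B_6,B_7$ (commutativity of $\cdot$ via the auxiliary atoms $0 \cdot y = 0$, $sx \cdot y = xy + y$, $xy = yx$, and similar auxiliaries for associativity and distributivity). Combined with $\mathrm{ILiteral} \subseteq \mathrm{IDClause}$ this supplies the $\preceq$ half of the block. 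The harder converse $T_5 + \{B_1,\ldots,B_7\} \vdash \mathrm{IDClause}$ I plan to argue model-theoretically: any $\M \vDash T_5 + \{B_1,\ldots,B_7\}$ is a cancellative commutative semiring with predecessor, so its Grothendieck construction yields a commutative ring $R$ of characteristic $0$ in which every term normalizes to a genuine polynomial. For a dual clause $D(x) \equiv \bigwedge_i p_i(x) = 0 \land \bigwedge_j q_j(x) \neq 0$ satisfying $\mathrm{LHS}(D)$, all standard elements lie in its solution set, so each $p_i \in R[x,\overline{z}]$ vanishes on infinitely many distinct points $s^n 0$; a finite-difference argument (iterating $\Delta p_i = p_i(x+1) - p_i(x)$ down to degree zero) combined with torsion-freeness of $R$ forces each $p_i$ to be identically zero, whence the equality conjuncts of $D$ are tautological. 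The remaining purely negative conjunction is then dispatched by a direct inductive argument exploiting the semiring structure, in the spirit of Lemma~\ref{lemma:arithmetics_t0_idclause_inf_pred}. \textbf{This reduction is the main obstacle of the theorem}: establishing the torsion-freeness of $R$ from the axioms and handling the residual disequalities carefully are the principal technical difficulties, whereas every other step is either routine or a direct application of cited results.

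For the top block $T_5 + \mathrm{IClause} \approx T_5 + \mathrm{IOpen} \approx T_5 + \{B_1,\ldots,B_7\} \cup \{C'_d \mid d \geq 2\}$, Theorem~\ref{theorem:arithmetics_t5_shep} together with the availability of $B_1,\ldots,B_7$ from $\mathrm{ILiteral} \subseteq \mathrm{IClause}$ reduces the task to showing $T_5 + \mathrm{IClause} \vdash C'_d$ for each $d \geq 2$. Fixing $d$ and assuming $dy = dz$, I apply $\mathrm{I}$ to the single clause $E(x) \equiv \bigvee_{i=0}^{d-1} (x+i) y = (x+i) z$. The base $E(0)$ holds via the $i = 0$ disjunct, since $0 \cdot y = 0 = 0 \cdot z$ (using $A_6$ and $B_5$). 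For the induction step, given a witness $i_0 \in \{0,\ldots,d-1\}$ with $(x+i_0)y = (x+i_0)z$: if $i_0 \geq 1$ then $(sx + (i_0-1)) y = (x+i_0) y = (x+i_0) z = (sx + (i_0-1)) z$ witnesses $E(sx)$ with $j = i_0 - 1$; if $i_0 = 0$, so $xy = xz$, distributivity ($B_7, B_5$) together with the hypothesis $dy = dz$ yields $(sx + (d-1)) y = (x + d) y = xy + dy = xz + dz = (x+d) z = (sx + (d-1)) z$, witnessing $E(sx)$ with $j = d - 1$. Hence $\mathrm{I}(E)$ gives $\forall x\, E(x)$, which is exactly $C'_d$. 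Finally, the strictness $T_5 + \mathrm{IDClause} \precneq T_5 + \mathrm{IClause}$ is witnessed by any Shepherdson-style model of $T_5 + \{B_1,\ldots,B_7\}$ failing some $C'_d$, constructed in~\cite{Shepherdson65Nonstandard} as the positive cone of a commutative ring without full multiplicative cancellation.
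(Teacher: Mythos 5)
Your overall decomposition matches the paper's, and the bottom block ($\N_{a,b}$ against $B2$, $\N_\infty$ against $\mathrm{ILiteral}$) and the top block (the clause induction for $C'_d$ and the appeal to Shepherdson's model violating $C'_d$) are essentially the paper's own arguments. The genuine gap is in the step you yourself flag as the main obstacle: $T_5 + \{B1,\ldots,B7\} \vdash \mathrm{IDClause}$. Your plan is to show that each equality conjunct $p_i$ vanishing at all $s^n0$ forces $p_i = 0$ via finite differences plus \emph{torsion-freeness} of the associated ring $R$. But torsion-freeness is not derivable from $B1$--$B7$ and is in fact false in general: $R$ has characteristic $0$ (so $n\cdot 1 \neq 0$), yet may contain $y \neq 0$ with $dy = 0$ --- this is exactly the situation in which the ideals $I_d = \{x \mid \forall y\, (dy=0 \rightarrow xy=0)\}$ of Lemma~\ref{lemma:arithmetics_t5_d_equiv_classes} are proper, and it is precisely the phenomenon that makes the $C'_d$ independent of $B1$--$B7$ and hence separates $\mathrm{IDClause}$ from $\mathrm{IClause}$ at this level. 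If your torsion-freeness claim were provable, $C'_d$ would follow and the separation you need two lines later would collapse. Concretely, the finite-difference argument ends with $n!\,a = 0$ for the leading coefficient $a$, from which you cannot conclude $a = 0$; and indeed a polynomial such as $y\cdot x(x+1)\cdots(x+d-1)$ with $dy=0$, $y\neq 0$ vanishes at every standard point without being the zero polynomial. The correct conclusion is only that $f$ vanishes everywhere \emph{as a function}, and reaching it requires Shepherdson's structural lemmas (Lemmas~\ref{lemma:arithmetics_t5_d_equiv_classes} and~\ref{lemma:arithmetics_t5_union_of_equiv_classes}): the root set of a polynomial with more than $\deg f$ roots in one comparison class is a union of cosets of some $I_d$, and the cosets of $0,\ldots,d-1$ exhaust $R$.

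The same machinery is also what is missing from your treatment of the purely negative conjuncts, which you defer to ``a direct inductive argument in the spirit of Lemma~\ref{lemma:arithmetics_t0_idclause_inf_pred}.'' That lemma lives in the successor-only setting and does not transfer. The paper's argument is: if a non-standard $b$ falsifies the dual clause $D$, then so does $b-m$ for every $m\geq 0$ (by $\mathbf{LHS}(D)$ read backwards through $B1$), so by pigeonhole a single $f_i$ has infinitely many roots in the comparison class of $b$; Lemma~\ref{lemma:arithmetics_t5_union_of_equiv_classes} makes its root set a union of cosets of some $I_d$, and Lemma~\ref{lemma:arithmetics_t5_d_equiv_classes} places a standard residue $k$ in the coset of $b$, so $f_i(k+md)=0$ for all $m$, giving infinitely many standard roots and contradicting $\mathbf{LHS}(D)$. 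Without the coset lemmas, neither half of your middle block goes through, so as it stands the proof of $T_5 + \mathrm{ILiteral} \approx T_5 + \mathrm{IDClause} \approx T_5 + \{B1,\ldots,B7\}$ is incomplete.
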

This theorem will be a consequence of the following lemmas in combination with Theorem~\ref{theorem:arithmetics_t5_shep}.
\begin{lemma}
\label{lemma:arithmetics_t5_iatom_b2}
    $T_5 + \mathrm{IAtom} \vdash \{\hyperref[ax:arithmetics_b2]{B2}, \hyperref[ax:arithmetics_b3]{B3}, \hyperref[ax:arithmetics_b5]{B5}, \hyperref[ax:arithmetics_b6]{B6}, \hyperref[ax:arithmetics_b7]{B7}\}$
\end{lemma}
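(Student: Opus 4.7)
Since $T_5 \supseteq T_4$ (over a larger language), Lemmas~\ref{lemma:arithmetics_t4_b2} and~\ref{lemma:arithmetics_t4_b3} immediately give $T_5 + \mathrm{IAtom} \vdash B2$ and $T_5 + \mathrm{IAtom} \vdash B3$. From this point on, commutativity and associativity of $+$ are available and will be used freely.

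For $B5$, a direct induction on $y$ in the atom $xy = yx$ fails, because the step case $x \cdot sy = sy \cdot x$ only unfolds the left-hand side via $A7$, leaving the opposite product $sy \cdot x$ inaccessible. I would therefore first establish two auxiliary atomic identities by $\mathrm{IAtom}$. First, $0 \cdot x = 0$, by induction on $x$ (base is $A6$; step uses $A7$, the induction hypothesis, and $A4$). Second, $sy \cdot x = yx + x$ by induction on $x$ with $y$ as a parameter; the step expands $sy \cdot sx$ via $A7$, applies the IH, and then rearranges using $A5$, $B2$, $B3$ to match $y \cdot sx + sx$. With these in hand, induction on $y$ in the atom $xy = yx$ closes: the base follows from $A6$ and the first auxiliary identity, and the step chains $x \cdot sy = xy + x = yx + x = sy \cdot x$ using $A7$, the IH, and the second auxiliary identity.

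For $B7$, I would induct on $z$ in the atom $x(y+z) = xy + xz$: the base reduces to $xy = xy + 0$ via $A4$ and $A6$, and the step expands $x(y + sz)$ via $A5$ and $A7$, applies the IH, and reassociates using $B3$. Finally, for $B6$, with distributivity now in hand, I would induct on $z$ in the atom $x(yz) = (xy) z$: the base collapses via $A6$, and in the step, $A7$ expands $y \cdot sz$ into $yz + y$, $B7$ distributes $x$ across this sum, the IH replaces $x(yz)$ by $(xy)z$, and a final application of $A7$ matches $(xy) \cdot sz$. In every case the induction formula is literally an atom, so $\mathrm{IAtom}$ is sufficient. The main obstacle is organising the proof of $B5$: one has to recognise that commutativity of multiplication is not a single direct atomic induction but requires the two preliminary atomic inductions above in order to avoid inducting on a conjunction. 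Everything else is a routine unfolding of the recursive defining equations $A4$--$A7$.
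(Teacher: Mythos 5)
Your proposal is correct and follows essentially the same route as the paper: $B2$ and $B3$ are inherited from the $T_4$ lemmas, $B5$ is obtained via the same two auxiliary atomic inductions ($0\cdot x = 0$ and $sy\cdot x = yx + x$) before the main induction on $xy = yx$, and $B7$, $B6$ are each a single atomic induction using distributivity. The only differences are which variable you induct on in a couple of places, which is immaterial.
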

\begin{proof}
    Since $T_5 \supseteq T_4$, it follows from Lemma~\ref{lemma:arithmetics_t4_b2} and Lemma~\ref{lemma:arithmetics_t4_b3} that $T_5 + \mathrm{IAtom} \vdash \hyperref[ax:arithmetics_b2]{B2}$ and $T_5 + \mathrm{IAtom} \vdash \hyperref[ax:arithmetics_b3]{B3}$. For the remaining proof, we work in $T_5 + \mathrm{IAtom}$.

    For $B5$ first apply induction on the atom $A_1(x) \equiv 0x = 0$. Then, apply induction on $x$ on the atom $A_2(x) \equiv (sy)x = yx + x$. Lastly, apply induction on $x$ on the atom $A_3(x) \equiv xy = yx$.

    For $B7$ apply induction on $x$ on the atom $A_4(x,y,z) \equiv x(y+z) = xy + xz$.

    For $B6$ apply induction on $x$ on the atom $A_5(x,y,z) \equiv (xy)z = x(yz)$.
\end{proof}

\begin{lemma}
    $T_5 + \mathrm{ILiteral} \vdash \{\hyperref[ax:arithmetics_b1]{B1}, \hyperref[ax:arithmetics_b4]{B4}\}$
\end{lemma}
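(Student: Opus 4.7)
The plan is to dispatch this lemma entirely by appealing to the corresponding results already established for the weaker theories $T_3$ and $T_4$, together with the observation that $T_5$ extends both of these theories in a language-preserving way (modulo the presence of the extra multiplication symbol, which does not matter since both prior proofs are language-monotonic).

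For $\hyperref[ax:arithmetics_b1]{B1}$, I would simply note that $T_5 \supseteq T_3$ over the common sublanguage $\{0,s,p\}$, so Lemma~\ref{lemma:arithmetics_t3_iliteral_b1} applies verbatim: the literal $L(x) \equiv x = 0 \lor x = spx$ (or equivalently, working from $\hyperref[ax:arithmetics_sur]{\mathrm{SUR}}$ supplied by Theorem~\ref{theorem:general_empty_theory_iliteral_sur}) is derivable in $T_3 + \mathrm{ILiteral}$, hence in $T_5 + \mathrm{ILiteral}$.

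For $\hyperref[ax:arithmetics_b4]{B4}$, the situation is analogous: $T_5 \supseteq T_4$, so Lemma~\ref{lemma:arithemetics_t4_b4} gives $B4$ from $T_4 + \mathrm{ILiteral}$ by induction on the literal $L(x) \equiv x+y \neq x+z$ under the hypothesis $y\neq z$, using $\hyperref[ax:arithmetics_b2]{B2}$ (which is in turn available via Lemma~\ref{lemma:arithmetics_t4_b2}, and still valid here since $T_5 + \mathrm{IAtom} \vdash B2$ by Lemma~\ref{lemma:arithmetics_t5_iatom_b2}). The same derivation goes through in $T_5 + \mathrm{ILiteral}$ without modification.

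There is no real obstacle here — the lemma is a bookkeeping step whose entire purpose is to lift B1 and B4 into the richer language. The only thing worth double-checking is that the prior proofs do not implicitly depend on the \emph{absence} of $\cdot$, which they do not: both arguments use only axioms from $T_3$ and $T_4$ respectively, and induction on a specific literal built from $0, s, p, +$, all of which remain available in $T_5$. So a two-sentence proof that cites the two earlier lemmas and notes $T_5 \supseteq T_3, T_4$ suffices.
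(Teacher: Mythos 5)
Your proposal is correct and takes essentially the same approach as the paper, whose proof is likewise a one-line appeal to $T_5 \supseteq T_4$ together with Lemma~\ref{lemma:arithmetics_t4_b1} and Lemma~\ref{lemma:arithemetics_t4_b4}. (One cosmetic slip: $x = 0 \lor x = spx$ is a clause rather than a literal, but your parenthetical fallback to $\mathrm{SUR}$ via Theorem~\ref{theorem:general_empty_theory_iliteral_sur} is exactly how the cited $T_3$ lemma actually proceeds, so nothing is lost.)
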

\begin{proof}
    Since $T_5 \supseteq T_4$, this claim follows directly from Lemma~\ref{lemma:arithmetics_t4_b1} and Lemma~\ref{lemma:arithemetics_t4_b4}.
\end{proof}

\begin{lemma}
\label{lemma:arithmetics_t5_iclause_cd}
    $T_5 + \mathrm{IClause} \vdash \hyperref[ax:arithmetics_cd]{C'_d}$ for any $d \geq 2$.
\end{lemma}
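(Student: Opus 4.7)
The plan is to fix $d \geq 2$, work in $T_5 + \mathrm{IClause}$, assume the antecedent $dy = dz$, and apply clause induction on $x$ to the clause
\[ C(x) \equiv \bigvee_{i=0}^{d-1} (x+i)\, y = (x+i)\, z, \]
where $y,z$ are parameters and $i$ abbreviates $s^i 0$ as usual. Note that by Lemma~\ref{lemma:arithmetics_t5_iatom_b2}, already $T_5 + \mathrm{IAtom}$ proves $B2, B3, B5, B6, B7$, so commutativity and associativity of $+$ and $\cdot$, as well as distributivity, are freely available throughout the argument.

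The base case $C(0)$ is trivial via the disjunct $i = 0$: by $A6$ and $B5$ we have $0 \cdot y = 0 = 0 \cdot z$. For the inductive step, suppose $C(x)$ holds, witnessed by some index $i \in \{0,\ldots,d-1\}$, i.e., $(x+i) y = (x+i) z$. If $i \geq 1$, then iterated application of $A5$ yields $x + i = sx + (i-1)$, so the same equality reads $(sx + (i-1)) y = (sx + (i-1)) z$, establishing $C(sx)$ via index $i-1$. In the wrap-around case $i = 0$, we have $xy = xz$; combining this with the premise $dy = dz$ via distributivity ($B7$) gives $(x + d) y = xy + dy = xz + dz = (x + d) z$, and since $x + d = sx + (d-1)$, this witnesses $C(sx)$ via index $d-1$. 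Applying $\mathbf{I}(C)$ then yields $\forall x : C(x)$, which is precisely the consequent of $C'_d$; discharging the assumption $dy = dz$ gives $C'_d$.

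The only real obstacle is the bookkeeping of the numeric identities $x + i = sx + (i-1)$ for $1 \leq i \leq d-1$ and $x + d = sx + (d-1)$, both of which unwind to $s^i x$ and $s^d x$ respectively using only $A4, A5$ together with commutativity and associativity of $+$, and are thus provable already in $T_5 + \mathrm{IAtom}$. Conceptually, the proof exploits the observation that the active disjunct of $C$ rotates cyclically through $\{d-1, d-2, \ldots, 1, 0, d-1, \ldots\}$ as $x$ is incremented by $s$, with the premise $dy = dz$ invoked exactly at each wrap-around; clause induction is essential here, since no literal (let alone atom) suffices to capture this cyclic witness-shifting argument in one shot.
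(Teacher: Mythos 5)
Your proof is correct and follows essentially the same route as the paper: induction on the clause formed by the $d$ atoms $(s^k x)y = (s^k x)z$ (your $(x+i)y=(x+i)z$), with the witness index shifting down by one at each successor step and the hypothesis $dy = dz$ invoked at the wrap-around from $0$ to $d-1$. The only cosmetic difference is that the paper folds the antecedent $dy = dz$ into the induction clause as the negative literal $dy \neq dz$, whereas you discharge it outside the induction; both are legitimate instances of $\mathrm{IClause}$.
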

\begin{proof}
    Fix any $d \geq 2$ and consider the clause $C(x) \equiv dy = dz \rightarrow \bigvee_{k=0}^{d-1}(s^kx)y = (s^kx)z$. We work in $T_5 + \mathrm{IClause}$. By \hyperref[ax:arithmetics_a6]{A6} and commutativity of $\cdot$, we have that $0y = 0 = 0z$ and in particular that $C(0)$ holds. Now assume that $C(x)$ holds. Take any $y,z$ s.t.\ $dy = dz$. If there is some $k > 0$ s.t.\ $(s^kx)y = (s^kx)z$, then $(s^{k-1}sx)y = (s^{k-1}sx)z$ and in particular $C(sx)$ holds. If $k = 0$, then $xy = xz$. Consider the term $(s^dx)y$. It holds that $(s^dx)y = (s^{d-1}x)y + y = \dots =(s^0x)y + dy = xy + dy = xz + dz = (s^dx)z$ and thus $C(sx)$. Induction on $C(x)$ yields the desired result.
\end{proof}
\begin{lemma}
    $T_5 \nvdash \mathrm{IAtom}$
\end{lemma}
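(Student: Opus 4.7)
The plan is essentially identical to the corresponding separation for $T_4$: exhibit a model of $T_5$ in which $B2$ fails, and then invoke Lemma~\ref{lemma:arithmetics_t5_iatom_b2} to conclude that $\mathrm{IAtom}$ cannot hold in that model.

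Concretely, I would use the model $\N_{a,b}$ from Definition~\ref{definition:arithmetics_nab}. Recall that $\N_{a,b}$ was constructed precisely as a model of $T = \{A1,\ldots,A7\}$, i.e.\ of $T_5$, and that Lemma~\ref{lemma:arithmetics_nab_plus_non_comm} establishes $\N_{a,b} \nvDash x+y=y+x$, that is $\N_{a,b} \nvDash B2$. By Lemma~\ref{lemma:arithmetics_t5_iatom_b2}, $T_5 + \mathrm{IAtom} \vdash B2$. Hence $\N_{a,b}$ witnesses $T_5 + \mathrm{IAtom} \not\subseteq \mathrm{Th}(\N_{a,b}) \supseteq T_5$, which means $T_5 \nvdash \mathrm{IAtom}$.

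Since both ingredients (the model and its failure of commutativity of $+$, together with the derivation of $B2$ from $\mathrm{IAtom}$ over $T_5$) are already in hand, there is no real obstacle; the argument is just a one-line combination. The only mild subtlety is making sure that $\N_{a,b}$ is used in the full language $\sL = \{0,s,p,+,\cdot\}$ rather than a reduct, but this is immediate from Definition~\ref{definition:arithmetics_nab}, which interprets all of these symbols.
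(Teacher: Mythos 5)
Your proposal is correct and is essentially the paper's own proof: it too reduces the claim to exhibiting a model of $T_5$ in which $+$ is not commutative, citing Lemma~\ref{lemma:arithmetics_t5_iatom_b2} and Lemma~\ref{lemma:arithmetics_nab_plus_non_comm} with the model $\N_{a,b}$. No gaps.
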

\begin{proof}
    By Lemma~\ref{lemma:arithmetics_t5_iatom_b2} it suffices to give a model of $T_5$, where $+$ is not commutative. The claim now follows directly from Lemma~\ref{lemma:arithmetics_nab_plus_non_comm}.
\end{proof}

\begin{lemma}
    $T_5 + \mathrm{IAtom} \nvdash \mathrm{ILiteral}$
\end{lemma}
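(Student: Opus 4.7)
The plan is to observe that this statement is essentially a direct corollary of Theorem~\ref{theorem:arithmetics_ninf_main}, which is the general separation result established for the model $\N_\infty$. That theorem asserts that $T' + \mathrm{IAtom} \nvdash \mathrm{ILiteral}$ for any subtheory $T' \subseteq T = \{A1, \ldots, A7\}$ over any sublanguage $\sL' \subseteq \sL = \{0, s, p, +, \cdot\}$. Since $T_5$ is defined to be exactly $\{A1, \ldots, A7\}$ over the full language $\sL$, we simply take $T' = T_5$ and $\sL' = \sL$ and apply the theorem.

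Unpacking what this application amounts to: the full model $\N_\infty$ satisfies $T_5$ by Lemma~\ref{lemma:arithmetics_ninf_model} (here no reduct is needed since we use the full language), it satisfies $\mathrm{IAtom}$ by Lemma~\ref{lemma:arithmetics_ninf_iatom}, and it falsifies $\mathrm{ILiteral}$ by Lemma~\ref{lemma:arithmetics_ninf_iliteral} using the literal $x \neq \infty$. Hence $\N_\infty$ witnesses the non-derivability and we are done.

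There is no real obstacle here; the work was already done in Section~2.2 precisely to handle such separations uniformly. The proof in the paper will therefore be a single sentence citing Theorem~\ref{theorem:arithmetics_ninf_main}, exactly analogous to the corresponding lemmas in the previous sections ($T_3 + \mathrm{IAtom} \nvdash \mathrm{ILiteral}$ and $T_4 + \mathrm{IAtom} \nvdash \mathrm{ILiteral}$).
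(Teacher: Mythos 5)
Your proposal is correct and matches the paper exactly: the paper's proof of this lemma is the single sentence ``This follows directly from Theorem~\ref{theorem:arithmetics_ninf_main}.'' Your unpacking of why the theorem applies (taking $T' = T_5 = \{A1,\ldots,A7\}$ over the full language, with $\N_\infty$ as the witnessing model) is accurate.
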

\begin{proof}
    This follows directly from Theorem~\ref{theorem:arithmetics_ninf_main}.
\end{proof}
We now want to show two things: First $T_5 + \mathrm{ILiteral} \vdash \mathrm{IDClause}$ and secondly, $T_5 + \mathrm{ILiteral} \nvdash \mathrm{IClause}$. In order to do this, we use the algebraic characterizations of models of $T_5 + \{\hyperref[ax:arithmetics_b1]{B1}, \ldots,  \hyperref[ax:arithmetics_b7]{B7}\}$ and $T_5 + \{\hyperref[ax:arithmetics_b1]{B1}, \ldots,  \hyperref[ax:arithmetics_b7]{B7}\} + \{\hyperref[ax:arithmetics_cd]{C'_d} \mid d \in \N, d \geq 2\}$ from \cite{Shepherdson67Rule}. Using these characterizations, we can show that $T_5 + \{\hyperref[ax:arithmetics_b1]{B1}, \ldots,  \hyperref[ax:arithmetics_b7]{B7}\} \vdash \mathrm{ILiteral}$, which then leads to the desired results.

Note that the construction used in the following lemma is essentially the same as in the construction of $\Z$.
\begin{lemma}
    \label{lemma:t5_iliteral_models}
    The models of $T_5 + \{\hyperref[ax:arithmetics_b1]{B1}, \ldots,  \hyperref[ax:arithmetics_b7]{B7}\}$ are exactly the ones obtained by taking a commutative ring, $\R = (R, +,-,0,\cdot,1)$, and then taking some subset $M$ of $R$ that does not contain $-1$, is closed under $0,+,\cdot, x \mapsto x+1$ and is closed under $x \mapsto x -1$ for all $x \neq 0$. We then define the operations $+,\cdot$ as in $\R$, $sx = x+1$ and $px = x-1$ if $x \neq 0$ and $0$ otherwise.
\end{lemma}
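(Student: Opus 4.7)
The plan is to prove both directions of the equivalence. The right-to-left direction is a direct verification. Given a commutative ring $\R = (R, +, -, 0, \cdot, 1)$ and $M \subseteq R$ as specified, I would check each axiom in $(M, 0, s, p, +, \cdot)$: axiom A1 follows from $-1 \notin M$, so $x+1 \neq 0$ for every $x \in M$; A2, A3 and B1 are immediate from the two-case definition of $p$ together with the fact that $sx = x+1 \neq 0$; and A4--A7 as well as B2--B7 reduce to commutative-ring identities, which transfer from $\R$ to $M$ since the latter is closed under $0, +$ and $\cdot$.

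For the converse direction, given a model $\M$ of $T_5 + \{\hyperref[ax:arithmetics_b1]{B1},\ldots,\hyperref[ax:arithmetics_b7]{B7}\}$, I mimic the classical construction of $\Z$ from $\N$. Define $\sim$ on $\M \times \M$ by $(a,b) \sim (c,d) \Leftrightarrow a + d = b + c$; this is an equivalence relation thanks to B2--B4. Set $R := (\M \times \M)/{\sim}$ and equip it with
\[
[(a,b)] + [(c,d)] := [(a+c,\, b+d)], \qquad [(a,b)] \cdot [(c,d)] := [(ac+bd,\, ad+bc)].
\]
A routine if tedious verification using B2, B3, B5, B6 and the distributivity axiom B7 shows that $R$ becomes a commutative ring with zero $[(0,0)]$, one $[(s(0),0)]$ (using that $a \cdot s(0) = a$, which follows from A6, A7 and B2) and additive inverse $-[(a,b)] = [(b,a)]$. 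The map $i : \M \to R$, $a \mapsto [(a,0)]$ is then injective by B4 and preserves $0, +, \cdot$.

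It remains to check the closure properties for $M' := i(\M) \subseteq R$. Closure of $M'$ under $x \mapsto x+1$ is immediate from $i(s(a)) = i(a) + 1$. Closure under $x \mapsto x-1$ for $x \neq 0$ is precisely the content of B1: whenever $a \neq 0$ in $\M$ we have $a = s(p(a))$, and hence $i(p(a)) = i(a) - 1$. The exclusion $-1 \notin M'$ follows from A1: if $i(a) = -1$ for some $a \in \M$, then $i(s(a)) = 0 = i(0)$, forcing $s(a) = 0$ in $\M$ by injectivity of $i$, contradicting A1. A final observation that $s$ and $p$ recovered from $(\R, M')$ via the recipe in the statement agree with their original interpretations in $\M$ closes the argument.

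The main technical obstacle is the well-definedness, associativity and distributivity of the multiplication on $R$ using only the axioms B2--B7 and without direct access to subtraction inside $\M$. This is the standard Grothendieck-ring bookkeeping but requires careful use of the distributive law B7 to rearrange the cross-terms when verifying that multiplication does not depend on the chosen representatives.
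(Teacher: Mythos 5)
Your proposal is correct and follows essentially the same route as the paper: the right-to-left direction is a direct verification, and the converse uses the same Grothendieck-style construction of the ring of differences $R = (\M\times\M)/{\sim}$ with $(a,b)\sim(c,d) \Leftrightarrow a+d=b+c$, the same operations, and the same embedding $a \mapsto [(a,0)]$. You supply a few details the paper leaves implicit (the axiom-by-axiom check in the easy direction, the argument that $-1$ is excluded via A1, and the role of B4 in transitivity and injectivity), but the underlying argument is identical.
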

\begin{proof}
    The first direction is trivial: Any subset of a commutative ring with the given properties is a model of $T_5 + \{\hyperref[ax:arithmetics_b1]{B1}, \ldots,  \hyperref[ax:arithmetics_b7]{B7}\}$.

    For the other direction, take any model $\M$ of $T_5 + \{\hyperref[ax:arithmetics_b1]{B1}, \ldots, \hyperref[ax:arithmetics_b7]{B7}\}$ with domain $M$. Consider the set $R = \nicefrac{M^2}{\sim}$, where $\sim$ is the equivalence relation defined by $(x,y) \sim (a,b) :\Leftrightarrow x+b = a+y$. The operations $+,\cdot,-$ are defined canonically on $R$:
    \begin{itemize}
        \item $[(x,y)]_\sim + [(a,b)]_\sim = [(x+a,y+b)]_\sim$
        \item $[(x,y)]_\sim\cdot[(a,b)]_\sim = [(xa + yb, ab + ya)]_\sim$
        \item $[(x,y)]_\sim - [(a,b)]_\sim = [(x,y)]_\sim + [(b,a)]_\sim$
    \end{itemize}
    Note that these operations are well-defined. Then, $[(0,0)]_\sim$ is neutral w.r.t. $+$ and $[(s0,0)]_\sim$ is neutral w.r.t $\cdot$. These elements are thus our $0$ and $1$ elements. The ring axioms hold.
    
    Define the function $\varphi : M \to R : x \mapsto [(x,0)]_\sim$. $\varphi$ is clearly a homomorphism w.r.t. $0,1,+,\cdot$. Assume that $x \neq 0$ and let $y$ be s.t.\ $x = sy$. Then $px = y$ and since $x+0 = x = sy = y+1$, $\varphi(px) = [(y,0)]_\sim = [(x,1)]_\sim = [(x,0)]_\sim - [(1,0)]_\sim$. Thus $\varphi$ is an embedding of $\M$ in $R$.
\end{proof}

Note that any such ring has characteristic $0$ because $\hyperref[ax:arithmetics_a1]{A1} \vdash 0 \neq s^n0$ for any $n \geq 1$.

\begin{definition}
    Let $\R$ be a commutative ring of characteristic $0$. $\R$ induces a directed graph $\G = (R, E)$, where $R$ is the domain of $\R$ and $E$ is defined by $a\mathrel{E}b \Leftrightarrow b = a+1$. The weakly connected components of this graph are called \textit{comparison classes}.
\end{definition}

\begin{definition}
    Let $\R$ be a commutative ring with domain $R$. For any given natural number $d$, we define $I_d = \{x \in R \mid \forall y \in R: dy = 0 \rightarrow xy = 0\}$.
\end{definition}

\begin{lemma}
    For each natural number $d$, $I_d$ is an ideal of $\R$.
\end{lemma}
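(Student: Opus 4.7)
The plan is to verify directly the three defining properties of an ideal: that $I_d$ contains $0$, that it is closed under subtraction, and that it absorbs multiplication by arbitrary elements of $\R$. Throughout the argument we use that $\R$ is a commutative ring, so distributivity and associativity of $\cdot$ are available.

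First I would observe that $0 \in I_d$, since $0 \cdot y = 0$ holds for every $y \in R$, so the defining implication $dy = 0 \rightarrow 0 \cdot y = 0$ is trivially satisfied. This ensures $I_d$ is nonempty.

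Next, for closure under subtraction, suppose $x_1, x_2 \in I_d$ and take any $y \in R$ with $dy = 0$. Then by the defining property of $I_d$ applied to both $x_1$ and $x_2$ we obtain $x_1 y = 0$ and $x_2 y = 0$, and hence, using distributivity in $\R$, $(x_1 - x_2) y = x_1 y - x_2 y = 0$. Therefore $x_1 - x_2 \in I_d$.

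Finally, for absorption, suppose $x \in I_d$ and $r \in R$, and take any $y \in R$ with $dy = 0$. Since $x \in I_d$, we have $xy = 0$, and then by associativity and commutativity of multiplication in $\R$, $(rx)y = r(xy) = r \cdot 0 = 0$. This gives $rx \in I_d$, completing the verification that $I_d$ is an ideal. There is no real obstacle here; the argument is a direct unfolding of the definition of $I_d$ together with the standard ring axioms, and the only subtlety is keeping the variable $y$ quantified universally inside the implication defining membership.
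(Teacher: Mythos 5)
Your proof is correct and follows essentially the same route as the paper: a direct verification of the ideal axioms from the definition of $I_d$, using distributivity for additive closure and associativity for absorption. The only cosmetic difference is that you package additive closure and negation into closure under subtraction, whereas the paper checks $0$, $-$, and $+$ separately.
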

\begin{proof}
    We have to show that $I_d$ is an additive subgroup and that for any $z \in R$ we have
    $zI_d \subseteq I_d$.

    $I_d$ is trivially closed under $0$ and $-$. To show the closure under $+$, consider two
    elements $x, x' \in I_d$ and any $y \in R$ s.t.\ $dy = 0$. By distributivity, we have $(x+x')y = xy + x'y = 0+0=0$.

    For the second part, take any $z \in R$, any $x \in I_d$, and any $y \in R$ s.t.\ $dy = 0$. By associativity, we have $(zx)y = z(xy) = z0=0$.
\end{proof}
Note that these ideals $I_d$ are not intuitive in the sense that they are trivial, i.e.,
 $I_d = R$ for $d \geq 2$, in most common rings $\R$ one might think of. For $I_d$ to be not trivial, $d$ has to divide $0$, i.e., there has to be some $y \neq 0$ s.t.\ $dy = 0$. Then $y\Z \subseteq I_d$, but $1 \notin I_d$.

\begin{lemma}
\label{lemma:arithmetics_t5_d_equiv_classes}
    For each natural number $d$ and for each element $x \in R$ we have $x + I_d = k + I_d$ for some $k \in \{0, \dots, d-1\}$.
\end{lemma}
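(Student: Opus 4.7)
The plan is to establish that every coset of $I_d$ in $R$ is represented by some integer in $\{0, 1, \ldots, d-1\}$. The key first observation is that $dR \subseteq I_d$: for every $r \in R$ and every $y \in R$ with $dy = 0$, we have $(dr)y = r \cdot dy = 0$. In particular, $d \cdot 1 = d \in I_d$, so the composition $\Z \to R \to R/I_d$ factors through $\Z/d\Z$ and has image exactly $\{0 + I_d, 1 + I_d, \ldots, (d-1) + I_d\}$.

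What remains is to show that this image covers all of $R/I_d$, i.e., every $x \in R$ is congruent modulo $I_d$ to some $k \in \{0, \ldots, d-1\}$. For this I would invoke Lemma~\ref{lemma:t5_iliteral_models}: $R$ arises via the Grothendieck-type construction $M^2/{\sim}$, so every $x \in R$ has the form $a - b$ with $a, b \in M$. It therefore suffices to prove the claim for single elements $a \in M$ and then extend additively: from $a - k_a \in I_d$ and $b - k_b \in I_d$ one gets $x - (k_a - k_b) \in I_d$, and $k_a - k_b$ can be shifted into $\{0, \ldots, d-1\}$ modulo $d$ using $d \in I_d$.

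For elements of $M$, I would argue by induction on $a$: the base case $a = 0$ takes $k_0 = 0$; for the successor step, given $a - k_a \in I_d$, the identity $s(a) - (k_a + 1) = a - k_a \in I_d$ yields $k_{s(a)} = k_a + 1$ when $k_a + 1 < d$, and $k_{s(a)} = 0$ otherwise (again using $d \in I_d$).

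The main obstacle is that the property ``$a - k \in I_d$ for some $k \in \{0, \ldots, d-1\}$'' hides a universal quantifier over $R$, since $I_d$ is defined by a universal condition, so the induction formula is not open in the language of $M$. Handling this properly will likely require either a cleverer equivalent open reformulation (perhaps unfolding $y \in R$ as a difference $u - v$ of $M$-elements with $du = dv$ and folding the resulting condition back into the language), or a direct algebraic argument that exploits the specific closure properties of $M$ (closure under $s$ everywhere and under $p$ on nonzero elements) to construct the integer $k_a$ for every $a \in M$ without explicit induction in the object theory.
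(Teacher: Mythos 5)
Your opening reductions are correct and unproblematic: $dR \subseteq I_d$ (hence $d \in I_d$, so the integers land in at most the $d$ cosets $0+I_d,\dots,(d-1)+I_d$), and the passage from an arbitrary $x = a-b \in R$ to single elements $a \in M$ using that $I_d$ is an ideal. But the remaining step --- that every $a \in M$ satisfies $a - k \in I_d$ for some $k \in \{0,\dots,d-1\}$ --- is where the entire content of the lemma lives, and your ``induction on $a$'' does not deliver it. Read as meta-level induction, it reaches only the standard elements $s^n0$, for which the claim is already trivial from $dR \subseteq I_d$ (take $k = n \bmod d$); the lemma is really about nonstandard elements. Read as object-level induction, it is unavailable: the ambient theory $T_5 + \{B1,\dots,B7\}$ is purely universal, the only induction extractable from it is $\mathrm{ILiteral}$ (Theorem~\ref{theorem:arithmetics_t5_b_impl_iliteral}, whose proof in this paper itself relies on the present lemma, so invoking it here would be circular), and in any case the formula $\bigvee_{k<d}(a-k \in I_d)$ is not a literal: unfolding $I_d$ introduces a universal quantifier over pairs $u,v$ with $du=dv$ together with a disjunction. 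You name this obstacle yourself, but naming it is not overcoming it; as written the argument establishes the lemma only on the standard part.

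The gap is also not closable by the means you propose. The conclusion of the lemma, applied to $x \in M$, immediately yields $C'_d$: if $x - k \in I_d$ and $dy = dz$, then $w = y - z$ is $d$-torsion in $R$, and for $i = (d-k) \bmod d$ one computes $(x+i)w = (x-k)w + (k+i)w = 0$, i.e., $(s^i x)y = (s^i x)z$ holds in $\M$. Since the paper quotes Shepherdson's result that $T_5 + \{B1,\dots,B7\} \nvdash C'_d$, no argument carried out with only those axioms (in particular, no open reformulation of the induction formula over $M$) can prove the claim for the ring of an arbitrary such model; one must import whatever additional hypotheses Shepherdson's Lemma~2 actually carries. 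Note finally that the paper offers no proof to compare against --- it simply cites \cite[Lemma 2]{Shepherdson67Rule} --- so the burden of supplying the missing nonstandard case falls entirely on your argument, and it is not met.
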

\begin{proof}
    \cite[Lemma 2]{Shepherdson67Rule}
\end{proof}

\begin{lemma}
\label{lemma:arithmetics_t5_union_of_equiv_classes}
    Let $f \in \R[x]$ be a polynomial. If $f$ has degree $n$ and more than $n$ roots in one comparison class, then there is some natural number $d$ s.t.\ the set of roots of $f$ is the union of certain equivalence classes modulo $I_d$.
\end{lemma}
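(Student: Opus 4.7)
The strategy is to convert "many roots in one comparison class" into an integer annihilator of the coefficients of $f$ and then to use the defining property of $I_d$ to translate this annihilation into closure of the root set under addition by elements of $I_d$.

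First I would pick $n+1$ roots $r_0, r_1, \dots, r_n$ of $f$ in the given comparison class. Since these lie in the same weakly connected component of the successor graph of $\R$, and $\R$ has characteristic $0$, the differences $r_j - r_i$ are nonzero integers. Writing $f(x) = \sum_{i=0}^n a_i x^i$, the equations $f(r_j) = 0$ form a linear system $V \vec{a} = 0$ where $V$ is the Vandermonde matrix with entries $V_{ji} = r_j^i$. Its determinant $D = \prod_{i<j}(r_j - r_i)$ is a nonzero integer. Multiplying by $\mathrm{adj}(V)$ and using $\mathrm{adj}(V) V = (\det V) I$ — an identity valid over any commutative ring — I obtain $D \cdot a_i = 0$ for every coefficient $a_i$ of $f$. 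Set $d = |D|$.

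Next I would show that the set of roots of $f$ is closed under translation by any element $z \in I_d$. Given a root $r$, the binomial expansion yields
\[
 f(r+z) = \sum_{i=0}^n a_i (r+z)^i = f(r) + \sum_{j=1}^n z^j\, Q_j(r),
\]
where $Q_j(r) = \sum_{i \geq j} a_i \binom{i}{j} r^{i-j} \in R$. Since $d\cdot a_i = 0$ for all $i$, it follows that $d \cdot Q_j(r) = 0$ for every $j$. By the defining property of $I_d$, $z$ annihilates every element that $d$ annihilates, so $z \cdot Q_j(r) = 0$, hence also $z^j Q_j(r) = 0$ for each $j \geq 1$. Combined with $f(r) = 0$, this gives $f(r+z) = 0$. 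Thus the root set is closed under addition of elements of $I_d$, i.e.\ it is a union of equivalence classes modulo $I_d$.

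The main technical point is the Vandermonde/Cramer step: over a general commutative ring one cannot divide by $D$, but the adjugate identity is a polynomial identity valid in any commutative ring and is exactly what turns "$n+1$ roots" into the coefficient annihilation $D a_i = 0$. Once that is in hand, the binomial expansion together with the definition of $I_d$ delivers the closure property immediately.
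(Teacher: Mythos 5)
Your proof is correct. Note that the paper does not actually prove this lemma itself: its ``proof'' is a citation to Lemma~3 of Shepherdson's 1967 paper, so you have supplied a self-contained argument where the authors defer to the literature. Your two key moves are both sound over an arbitrary commutative ring: the adjugate identity $\mathrm{adj}(V)V=(\det V)I$ is a polynomial identity with integer coefficients, so from $V\vec a=0$ you legitimately get $D a_i=0$ with $D=\prod_{i<j}(r_j-r_i)=M\cdot 1$ for a nonzero integer $M$ (the differences are nonzero integer multiples of $1$ because the $n+1$ roots are distinct and lie in one comparison class, and $M$ is well defined since the ring has characteristic $0$); and the binomial expansion $f(r+z)=f(r)+\sum_{j\ge 1}z^jQ_j(r)$ with $dQ_j(r)=0$ lets the defining property of $I_d$ kill every correction term, since $z\in I_d$ annihilates anything annihilated by $d$. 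Two small points worth making explicit if you write this up: first, $d=|M|\ge 1$, and in the degenerate case $d=1$ one gets $a_i=0$ for all $i$, so $f$ vanishes identically and the conclusion is trivially true; second, closure of the root set under translation by elements of $I_d$ yields a union of equivalence classes because $I_d$ is an additive subgroup (indeed an ideal, as shown in the preceding lemma of the paper), so the cosets partition $R$. Your observation that one cannot divide by $D$ and must instead use the adjugate is exactly the right technical point for this generality.
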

\begin{proof}
    \cite[Lemma 3]{Shepherdson67Rule}
\end{proof}

\begin{lemma}
\label{lemma:arithmetics_t5_term_polynomial}
    Let $\M$ be any model of $T_5 + \{\hyperref[ax:arithmetics_b1]{B1}, \ldots,  \hyperref[ax:arithmetics_b7]{B7}\}$ with domain $M$ and $t(x, \overline{z})$ some term. Identify $M$ with its embedding in the ring $R$. For each $\overline{a} \in M$, there is some polynomial $f \in M[x]$ s.t.\ $t(x, \overline{a})$ is interpreted as $f(x)$ for almost all elements in $M$. The finitely many elements where the evaluation differs are all standard elements.
\end{lemma}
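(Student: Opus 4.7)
The plan is to proceed by structural induction on $t$, constructing at each step a polynomial $f_t \in M[x]$ and a finite set $E_t$ of standard elements of $M$ such that, for every $x \in M \setminus E_t$, the element $t(x,\overline{a})$ computed in $\M$ coincides with $f_t(x)$ computed in $R$. The base cases $t \equiv 0$, $t \equiv x$, $t \equiv z_i$ are handled by the constant $0$, the identity $x$, or the constant $a_i$, with $E_t = \emptyset$. For $t \equiv sr$, $t \equiv r_1+r_2$, and $t \equiv r_1 \cdot r_2$, I would combine the inductively-given polynomials via the corresponding ring operation and take unions of exceptional sets; this is legitimate because, by Lemma~\ref{lemma:t5_iliteral_models}, $s$, $+$, and $\cdot$ in $\M$ agree on all of $M$ with $y \mapsto y+1$, $+_R$, and $\cdot_R$.

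The non-trivial case is $t \equiv pr$. Let $f_r$ and $E_r$ be given by the inductive hypothesis. In $R$, predecessor acts as $y \mapsto y-1$ for $y \neq 0$, but in $\M$ we have $p0 = 0 \neq -1$, so the natural candidate $f_t := f_r - 1$ disagrees with $t(x,\overline{a})$ precisely where $f_r(x) = 0$. If $f_r$ is identically zero in $M[x]$, then $r(x,\overline{a}) = 0$ for all $x \in M\setminus E_r$ and we simply take $f_t \equiv 0$ with $E_t = E_r$. Otherwise, set $f_t = f_r - 1$ and $E_t = E_r \cup \{x \in M : f_r(x) = 0\}$.

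The main obstacle is to show, in the non-zero case, that the zero set $\{x \in M : f_r(x) = 0\}$ is finite and consists only of standard elements, so that $E_t$ remains a finite standard set. Here I would invoke Lemma~\ref{lemma:arithmetics_t5_union_of_equiv_classes}: were $f_r$ to possess more than $\deg f_r$ roots in some single comparison class of $R$, its root set would be a union of $I_d$-cosets for some $d \geq 1$. Combined with the constraint $-1 \notin M$ from Lemma~\ref{lemma:t5_iliteral_models} and the closure of $M$ under $+$, $\cdot$ and the non-zero predecessor, propagating such a coset of roots through these operations would eventually force $-1 \in M$, a contradiction. A similar closure-under-$p$ argument excludes isolated non-standard roots, leaving at most $\deg f_r$ roots, all lying in the single comparison class of $0$, namely the standard part of $M$. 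This is the delicate point of the proof, and the step I expect to require the most careful case analysis on the structure of the comparison classes and ideals $I_d$.
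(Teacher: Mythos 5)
Your overall strategy --- structural induction on $t$, replacing $p(\cdot)$ by $(\cdot)-1$, and collecting as exceptional points those $x$ at which the argument of some $p$ evaluates to $0$ --- is essentially the paper's proof, which packages the same idea via a ``$p$-depth'' count and asserts that the exceptions lie among the first $pd(t)$ successors of $0$. To your credit, you explicitly isolate the step that this assertion hides: one must show that for each subterm $r$ the set $\{x \in M : r^{\M}(x,\overline{a}) = 0\}$ is finite and contained in the standard part. That is indeed the crux. The problem is that the argument you sketch for it does not work. Lemma~\ref{lemma:arithmetics_t5_union_of_equiv_classes} only says something when $f_r$ has \emph{more than} $\deg f_r$ roots in a \emph{single} comparison class; it is silent about, say, one non-standard root, or about infinitely many roots spread over infinitely many comparison classes with at most $\deg f_r$ in each. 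So it cannot ``exclude isolated non-standard roots,'' and the phrase ``propagating such a coset of roots through these operations would eventually force $-1 \in M$'' is a hope, not a proof. Moreover, in the case where the coset lemma does apply, its actual consequence (via Lemma~\ref{lemma:arithmetics_t5_d_equiv_classes}) is that $f_r$ vanishes on $k + d\N$ for some $k$, i.e.\ at \emph{infinitely many standard} points --- which would make your exceptional set infinite and falsify the lemma. So that case must be refuted outright, not merely shown to yield standard roots; in the paper's later proofs the refutation comes from an induction hypothesis on the standard part, which is simply not available here.

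What is actually needed is a family of ad hoc consequences of $-1 \notin M$ combined with closure under $+$ and $\cdot$ (closure under $p$ alone does nothing). For instance: a non-standard $b \in M$ with $b \cdot b = 0$ is impossible because $(b+1)(b-1) = -1 \in M$; a $b \in M$ with $b\cdot b = s^2 0$ is impossible because $b(b-3) + 4 = 6 - 3b$ and $(6-3b)+(3b-7) = -1$ puts $-1 \in M$; zero-divisors $u,v \in M$ are impossible because $(u+1)(v-1) + (u-1)(v+1) = -2$; torsion elements are impossible because $m(y-1) = -m$; and so on. Each such computation kills one shape of term polynomial, and the missing content of your proof (and, to be fair, of the paper's one-line assertion) is a uniform argument of this kind covering an arbitrary subterm. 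Until you supply that, the induction step for $t \equiv pr$ is not established.
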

\begin{proof}
    We assign a $p$-depth $pd$ to each term: $pd(0) = pd(1) = pd(x) = pd(z) = 0$, $pd(t_1 + t_2) = \max{\{pd(t_1), pd(t_2)\}}$, $pd(t_1 \cdot t_2) = \max{\{pd(t_1), pd(t_2)\}}$, $pd(st_1) = pd(t_1)$ and $pd(pt_1) = pd(t_1) + 1$. 
    
    $f$ is now obtained by replacing every occurrence of $px$ with $x-1$. Note that $x-1$ and $px$ have the same evaluation if $x \neq 0$. By the commutativity, associativity and distributivity of $+$ and $\cdot$, $f$ is, w.l.o.g., a polynomial. If $pd(t) = n$, then $f$ and $t$ have a different evaluation, at most, at the first $n$ successors of $0$.
\end{proof}

\begin{lemma}
\label{lemma:arithmetics_t5_atom_standard_part}
    Let $\M$ be any model of $T_5 + \{\hyperref[ax:arithmetics_b1]{B1},\dots, \hyperref[ax:arithmetics_b7]{B7}\}$ and $A$ any atom. If $\M \vDash A(s^n0)$ for all $n \geq 0$, then $\M \vDash \forall x: A(x)$.
\end{lemma}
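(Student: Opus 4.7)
The plan is to reduce the atom $A(x)$ to a polynomial equation over the ring $R$ from Lemma~\ref{lemma:t5_iliteral_models} and then to use Lemmas~\ref{lemma:arithmetics_t5_d_equiv_classes} and~\ref{lemma:arithmetics_t5_union_of_equiv_classes} to force the polynomial to vanish on all of $R$. Write $A(x,\overline{z}) \equiv t_1(x,\overline{z}) = t_2(x,\overline{z})$, fix values $\overline{a} \in M$ for the parameters, and apply Lemma~\ref{lemma:arithmetics_t5_term_polynomial} to obtain polynomials $f_1, f_2 \in M[x]$ and some $N \in \N$ such that $t_i(x,\overline{a})$ is interpreted as $f_i(x)$ on every element except possibly the first $N$ successors of $0$. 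Set $f := f_1 - f_2 \in R[x]$.

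If $f$ is the zero polynomial, then $A(x)$ holds on all non-standard elements of $M$, and the hypothesis handles the standard ones, so $\M \vDash \forall x : A(x)$. Otherwise let $n := \deg f$. By hypothesis $A(s^k 0)$ holds for every $k \geq 0$, hence $f(s^k 0) = 0$ for every $k \geq N$. All standard elements lie in the comparison class of $0$ (consecutive standard elements differ by $1$), so $f$ has infinitely many, and in particular more than $n$, roots in that class. Lemma~\ref{lemma:arithmetics_t5_union_of_equiv_classes} then yields a $d \in \N$ such that the root set of $f$ is a union of equivalence classes modulo $I_d$.

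It remains to show that every coset $j + I_d$ for $j \in \{0,\ldots,d-1\}$ lies entirely in the root set. First observe that $d\Z \subseteq I_d$: for any integer $m$ and any $y$ with $dy = 0$ we have $(dm)y = m(dy) = 0$. Now pick a standard $k \geq N$ with $k \equiv j \pmod d$; then $k - j \in d\Z \subseteq I_d$, so $k \in j + I_d$, and since $k$ is a root of $f$ the entire coset $j + I_d$ consists of roots. By Lemma~\ref{lemma:arithmetics_t5_d_equiv_classes} every element of $R$ lies in one of these cosets, hence $f$ vanishes on all of $R$ and in particular on $M$. For non-standard $x \in M$ this gives $A(x)$; for standard $x$ the hypothesis supplies $A(x)$. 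Thus $\M \vDash \forall x : A(x)$.

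The main subtlety is the bookkeeping around the term--polynomial discrepancies: one must ensure that the finitely many standard exceptions where $t_i$ differs from $f_i$ do not obstruct the existence of a standard root of $f$ in each coset $j + I_d$. This is exactly where the inclusion $d\Z \subseteq I_d$ is essential, guaranteeing that arbitrarily large standard representatives of each coset are available and hence that every coset contains a root.
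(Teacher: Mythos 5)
Your proof is correct and follows essentially the same route as the paper's: convert the terms to polynomials via Lemma~\ref{lemma:arithmetics_t5_term_polynomial}, note that $f$ has infinitely many roots in the comparison class of $0$, invoke Lemma~\ref{lemma:arithmetics_t5_union_of_equiv_classes} to see the root set is a union of cosets modulo $I_d$, and use $d\Z \subseteq I_d$ together with Lemma~\ref{lemma:arithmetics_t5_d_equiv_classes} to cover all cosets. You are in fact slightly more careful than the paper in handling the zero polynomial separately and in choosing standard representatives $k \geq N$ of each residue class so as to avoid the finitely many exceptional points where term and polynomial disagree.
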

\begin{proof}
    Take any such model $\M$. By Lemma~\ref{lemma:arithmetics_t5_term_polynomial}, for every term $t$, there is a polynomial $g_t$ s.t.\ $t(x) = g_t(x)$ for almost all elements in $\M$ except for finitely many standard elements. Now take any atom $A(x) \equiv t_1 = t_2$. Define the polynomial $f = g_{t_1} - g_{t_2}$. Then, for almost all $b\in \M$, except for finitely many standard elements, $\M \vDash A(b)$ iff $f(b) = 0 \in \R$. By assumption, $f$ has infinitely many roots in the comparison class of $0$. By Lemma~\ref{lemma:arithmetics_t5_union_of_equiv_classes}, there is some $d\in\N$ s.t.\ the set of roots of $f$ is the union of certain equivalence classes modulo $I_d$. We claim that $f$ has a root in every equivalence class of $I_d$ and thus, $f = 0$. Note that $x + I_d = (x+d)+I_d$ is equivalent to $d + I_d = I_d$, which holds by definition. Thus, and since $\M \vDash A(0) \land\forall x : A(x) \rightarrow A(sx)$, we conclude that $f$ has a zero in the equivalence classes of $0,1,\dots, d-1$. By Lemma~\ref{lemma:arithmetics_t5_d_equiv_classes}, these cover all the equivalence classes. Since $\M \vDash A(b)$ iff $f(b) = 0$ for any non-standard element $b \in \M$, we have $\M \vDash \forall x : A(x)$.
\end{proof}
\begin{theorem}
\label{theorem:arithmetics_t5_b_impl_iliteral}
    $T_5 + \{\hyperref[ax:arithmetics_b1]{B1}, \ldots, \hyperref[ax:arithmetics_b7]{B7}\} \approx T_5 + \mathrm{ILiteral}$.
\end{theorem}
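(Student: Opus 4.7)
The plan is to establish the equivalence by proving each inclusion. The direction $T_5 + \{B_1,\ldots,B_7\} \preceq T_5 + \mathrm{ILiteral}$ is already in hand: Lemma~\ref{lemma:arithmetics_t5_iatom_b2} gives $B_2, B_3, B_5, B_6, B_7$ from $T_5 + \mathrm{IAtom}$, and $B_1$ and $B_4$ follow from $\mathrm{ILiteral}$ over $T_4 \subseteq T_5$ via Lemma~\ref{lemma:arithmetics_t4_b1} and Lemma~\ref{lemma:arithemetics_t4_b4}. The real work is the converse, $T_5 + \{B_1,\ldots,B_7\} \vdash \mathrm{ILiteral}$. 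I would fix a model $\M$, embed it (using Lemma~\ref{lemma:t5_iliteral_models}) into a commutative ring of characteristic $0$ with underlying set $R$, and take a literal $L(x)$ with $\M \vDash \hyperref[ax:arithmetics_lhs]{\mathbf{LHS}(L)}$; in particular $\M \vDash L(s^n 0)$ for every $n \in \N$. For a positive atom $L \equiv A$ this already suffices by Lemma~\ref{lemma:arithmetics_t5_atom_standard_part}.

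The genuinely new case is $L(x) \equiv t_1(x) \neq t_2(x)$, where Lemma~\ref{lemma:arithmetics_t5_atom_standard_part} runs the wrong way: it propagates the atom $A(x) \equiv t_1(x) = t_2(x)$ from standard to non-standard elements, whereas now I must preclude any non-standard root of $A$ in $\M$. Suppose, for contradiction, that such a root $a$ exists. Contraposing the induction step of $L$, together with $\hyperref[ax:arithmetics_b1]{B1}$ (which guarantees $y = spy$ whenever $y \neq 0$), shows that any nonzero root $y$ of $A$ has $py$ a root as well; iterating from $a$, the chain $a, pa, p^2 a, \ldots$ becomes, inside $R$, the distinct elements $a, a-1, a-2, \ldots$ (distinct because $R$ has characteristic $0$), giving infinitely many roots in the comparison class of $a$. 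Let $f = g_{t_1} - g_{t_2} \in R[x]$ be the polynomial associated with $A$ via Lemma~\ref{lemma:arithmetics_t5_term_polynomial}: since $t_i$ and $g_{t_i}$ disagree only on finitely many standard elements, $f$ vanishes on the whole chain, and for all sufficiently large standard $n$, $f(s^n 0) = t_1(s^n 0) - t_2(s^n 0) \neq 0$ by $L(s^n 0)$, so $f$ is a nonzero polynomial with more than $\deg f$ roots in one comparison class.

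The closing step, which I expect to be the crux, combines the ideal machinery with the polynomial approximation. By Lemma~\ref{lemma:arithmetics_t5_union_of_equiv_classes} the roots of $f$ form a union of cosets modulo some $I_d$, and by Lemma~\ref{lemma:arithmetics_t5_d_equiv_classes} I can choose $k \in \{0,\ldots,d-1\}$ with $a + I_d = k + I_d$, so that $k$ itself is a root of $f$. Since $d \in I_d$ trivially, the whole progression $k, k+d, k+2d, \ldots$ lies in $k + I_d$ and consists entirely of roots of $f$. Choosing $m$ large enough that $k + md$ exceeds the finite discrepancy bound between $t_i$ and $g_{t_i}$ for both $i$, the equality $f(k+md) = 0$ transfers back into $\M$ as $t_1(s^{k+md} 0) = t_2(s^{k+md} 0)$, contradicting $\M \vDash L(s^{k+md} 0)$. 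The main obstacle is exactly this final transfer: the $I_d$-coset structure has to be exploited to manufacture a \emph{standard} witness to a failure of $L$, because only at standard elements do we know $L$ to hold in $\M$ from the induction hypothesis alone.
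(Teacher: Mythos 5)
Your proposal is correct and follows essentially the same route as the paper's proof: the easy direction via the already-established lemmas, atoms via Lemma~\ref{lemma:arithmetics_t5_atom_standard_part}, and negated atoms by using \hyperref[ax:arithmetics_b1]{B1} and the contrapositive of the induction step to produce infinitely many roots of $f = g_{t_1} - g_{t_2}$ in one comparison class, then Lemmas~\ref{lemma:arithmetics_t5_union_of_equiv_classes} and~\ref{lemma:arithmetics_t5_d_equiv_classes} to pull a root back into the standard part and contradict $\mathbf{LHS}(L)$. You are in fact somewhat more explicit than the paper about the two transfer points (distinctness of the chain $a, a-1, \ldots$ and choosing $m$ beyond the finite discrepancy set of Lemma~\ref{lemma:arithmetics_t5_term_polynomial}), which the paper leaves implicit.
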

\begin{proof}
    We have already shown that $T_5 + \mathrm{ILiteral} \vdash \{\hyperref[ax:arithmetics_b1]{B1}, \ldots, \hyperref[ax:arithmetics_b7]{B7}\}$. Thus, it suffices to show that $T_5 + \{\hyperref[ax:arithmetics_b1]{B1}, \ldots, \hyperref[ax:arithmetics_b7]{B7}\} \vdash \mathrm{ILiteral}$.
    
    Take any model $\M$ of $T_5 + \{\hyperref[ax:arithmetics_b1]{B1}, \ldots,  \hyperref[ax:arithmetics_b7]{B7}\}$. By Lemma~\ref{lemma:arithmetics_t5_term_polynomial}, for every term $t$, there is a polynomial $g_t$ s.t.\ $t(x) = g_t(x)$ for almost all elements in $\M$ except for finitely many standard elements.

    It follows directly from Lemma~\ref{lemma:arithmetics_t5_atom_standard_part} that $\M$ satisfies induction over atoms.

    Now take any any negated atom $L(x) \equiv t_1 \neq t_2$ and assume that $\M \vDash L(0) \land \forall x : L(x) \rightarrow L(sx)$. Again, if $f= g_{t_1} - g_{t_2}$, then $\M \vDash L(b)$ iff $f(b) \neq 0$ for almost all elements $b \in \M$, except for finitely many standard elements. If there is some $b$ s.t.\ $\M \nvDash L(b)$, then this $b$ has to be a non-standard element and $f(b-m) = 0$ for any $m \geq 0$. Thus, $f$ has infinitely many zeros in the comparison class of $b$. By Lemma~\ref{lemma:arithmetics_t5_union_of_equiv_classes}, there is some $d\in\N$ s.t.\ the set of roots of $f$ is the union of certain equivalence classes modulo $I_d$. In particular, for every element $b'$ in the equivalence class of $b$ it holds that $f(b') = 0$. By Lemma~\ref{lemma:arithmetics_t5_d_equiv_classes}, there is some $k \in \{0,\dots,d-1\}$ s.t.\ $k \equiv b \mod{I_d}$. Note that $x \equiv x+d \mod{I_d}$ is equivalent to $0 \equiv d \mod{I_d}$, which holds by definition. Thus, $f(k+md) = 0$ for any $m \in \Z$ and $f$ has infinitely many roots in the standard part of the model, which contradicts our assumption of $\M \vDash L(0) \land \forall x : L(x) \rightarrow L(sx)$.
\end{proof}

\begin{lemma}
    $T_5 + \mathrm{ILiteral} \nvdash \mathrm{IClause}$
\end{lemma}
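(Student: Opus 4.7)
The plan is to combine Theorem~\ref{theorem:arithmetics_t5_b_impl_iliteral} and Lemma~\ref{lemma:arithmetics_t5_iclause_cd} with an explicit model construction. Since $T_5 + \mathrm{ILiteral} \approx T_5 + \{B1, \ldots, B7\}$ by Theorem~\ref{theorem:arithmetics_t5_b_impl_iliteral}, and $T_5 + \mathrm{IClause} \vdash \hyperref[ax:arithmetics_cd]{C'_d}$ for every $d \geq 2$ by Lemma~\ref{lemma:arithmetics_t5_iclause_cd}, it suffices to exhibit a model of $T_5 + \{B1, \ldots, B7\}$ in which $\hyperref[ax:arithmetics_cd]{C'_d}$ fails for some fixed $d \geq 2$.

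To build such a model I would apply Lemma~\ref{lemma:t5_iliteral_models}: take the commutative ring $R = \Z[t,\epsilon]/(d\epsilon,\epsilon^2)$, whose elements have a unique normal form $f(t) + g(t)\epsilon$ with $f \in \Z[t]$ and $g \in (\Z/d\Z)[t]$, and which has characteristic $0$. The candidate subset is
\[
    M \;=\; \N \;\cup\; \{f + g\epsilon \;:\; \deg f \geq 1,\; \text{leading coefficient of } f \text{ is positive},\; g \in (\Z/d\Z)[t]\}.
\]
Clearly $0 \in M$ and $-1 \notin M$.

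To falsify $\hyperref[ax:arithmetics_cd]{C'_d}$ in $M$, I would instantiate the axiom's universal variables as $x := t$, $y := t + t\epsilon$, $z := t$; all three lie in $M$ since each has $\Z[t]$-part $t$, of degree $1$ and positive leading coefficient. Then $dy - dz = d \cdot t\epsilon = t \cdot (d\epsilon) = 0$, while for every $i \in \{0,1,\ldots,d-1\}$ one computes
\[
    (x+i)(y - z) \;=\; (t+i) \cdot t\epsilon \;=\; (t^2 + it)\epsilon,
\]
which is nonzero in $R$ because $t(t+i)$ is a nonzero polynomial in $(\Z/d\Z)[t]$. Hence every disjunct of $\hyperref[ax:arithmetics_cd]{C'_d}$ fails under this assignment.

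The main obstacle is verifying the closure conditions of Lemma~\ref{lemma:t5_iliteral_models} for $M$, specifically closure under $p$. A naive positive-cone definition fails: once any pure $\epsilon$-element $g\epsilon$ with $g \neq 0$ enters $M$, closure under $p$ would put $-1 + g\epsilon$ into $M$, and since $\epsilon$ is $d$-torsion, $-g\epsilon = (d-1)g\epsilon$ would also be forced into $M$ by iterated addition, yielding $-1 \in M$ in violation of the lemma. The fix is to insist $\deg f \geq 1$ for every element of $M$ whose $\epsilon$-part may be nonzero: then $p(f + g\epsilon) = (f - 1) + g\epsilon$ preserves both the degree and the positivity of the leading coefficient of $f$, while $p$ on the $\N$-sector just decrements in the usual way (with $p(0)=0$). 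The closures under $+,\cdot,s$ are then routine, since sums and products of degree-$\geq 1$ polynomials with positive leading coefficients stay within that class, and mixing with a non-negative integer does not decrease the degree.
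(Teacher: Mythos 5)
Your proof follows the same top-level reduction as the paper: by Theorem~\ref{theorem:arithmetics_t5_b_impl_iliteral} and Lemma~\ref{lemma:arithmetics_t5_iclause_cd}, it suffices to show $T_5 + \{\hyperref[ax:arithmetics_b1]{B1},\ldots,\hyperref[ax:arithmetics_b7]{B7}\} \nvdash \hyperref[ax:arithmetics_cd]{C'_d}$ for some $d \geq 2$. The difference lies in how that fact is established: the paper simply cites \cite{Shepherdson65Nonstandard} for it, whereas you prove it from scratch with an explicit countermodel obtained via Lemma~\ref{lemma:t5_iliteral_models}. Your construction is correct: in $R = \Z[t,\epsilon]/(d\epsilon,\epsilon^2)$, which has characteristic $0$, the set $M = \N \cup \{f+g\epsilon : \deg f \geq 1,\ f \text{ has positive leading coefficient}\}$ contains $0$, omits $-1$, and is closed under $+$, $\cdot$, $x\mapsto x+1$, and $x\mapsto x-1$ for $x\neq 0$ (the one case your closing sentence glosses over is multiplication by $0$, which lands back in the $\N$-sector rather than preserving the degree condition --- harmless, but worth stating); and the assignment $x=t$, $y=t+t\epsilon$, $z=t$ satisfies $dy=dz$ while $(x+i)(y-z)=(t^2+it)\epsilon \neq 0$ for every $i\in\{0,\ldots,d-1\}$, since the $\epsilon$-coefficient $t^2+it$ has leading coefficient $1 \not\equiv 0 \pmod d$. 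Your diagnosis of why the degree restriction is forced (a pure torsion element $g\epsilon$ together with closure under $p$ and $+$ would drag $-1$ into $M$) is exactly right. What your route buys is a self-contained independence proof in place of a black-box citation; what the paper's route buys is brevity, at the cost of deferring the real content of the separation to Shepherdson.
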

\begin{proof}
    In \cite{Shepherdson65Nonstandard} it was shown that $T_5 + \{\hyperref[ax:arithmetics_b1]{B1}, \ldots, \hyperref[ax:arithmetics_b7]{B7}\} \nvdash \hyperref[ax:arithmetics_cd]{C'_d}$ for all $d \geq 2$. Since $T_5 + \mathrm{IClause} \vdash \hyperref[ax:arithmetics_cd]{C'_d}$ for all $d \geq 2$ and $T_5 + \{\hyperref[ax:arithmetics_b1]{B1}, \ldots, \hyperref[ax:arithmetics_b7]{B7}\} \vdash \mathrm{ILiteral}$, it follows that $T_5 + \mathrm{ILiteral} \nvdash \mathrm{IClause}$.
\end{proof}
\begin{lemma}
    $T_5 + \mathrm{ILiteral} \vdash \mathrm{IDClause}$
\end{lemma}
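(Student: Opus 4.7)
The plan is to exploit the algebraic characterisation of models of $T_5 + \mathrm{ILiteral}$: by Theorem~\ref{theorem:arithmetics_t5_b_impl_iliteral} any such model also satisfies $\{B_1,\ldots,B_7\}$, so by Lemma~\ref{lemma:t5_iliteral_models} its domain $M$ can be identified with a subset of a commutative ring $\R$ of characteristic $0$. Fix such an $\M$, a dual clause $D(x) \equiv \bigwedge_{i=1}^k L_i(x)$, and assume $\M \vDash D(0) \land \forall x(D(x) \to D(sx))$; the aim is to derive $\M \vDash \forall x\, D(x)$.

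A straightforward external induction on $n$ shows $\M \vDash D(s^n 0)$ for all $n \in \N$, so every literal $L_i$ holds on the entire standard part of $\M$. For each $L_i$ that is a positive atom, Lemma~\ref{lemma:arithmetics_t5_atom_standard_part} upgrades this to $\M \vDash \forall x\, L_i(x)$. Consequently, only the negated literals among the $L_i$ can possibly fail on the non-standard part, and this reduction is the crucial first step.

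Assume for contradiction that $\M \nvDash D(b)$ for some $b \in M$. Then $b$ is necessarily non-standard, and by Lemma~\ref{lemma:t5_iliteral_models} the predecessors $b, b-1, b-2, \ldots$ all lie in $M$ and are again non-standard (else $b$ would be standard). Contraposition of the induction step yields $\M \nvDash D(b-m)$ for every $m \geq 0$, so at each $b-m$ some literal fails; by the previous paragraph it must be a negated one. Since the literal list is finite, a pigeonhole argument produces a single negated literal $L_j(x) \equiv t_{j,1} \neq t_{j,2}$ failing at infinitely many $b-m$. All these arguments are non-standard, so Lemma~\ref{lemma:arithmetics_t5_term_polynomial} guarantees $t_{j,\ell}(b-m) = g_{t_{j,\ell}}(b-m)$ exactly, and hence the polynomial $f_j \equiv g_{t_{j,1}} - g_{t_{j,2}} \in \R[x]$ has infinitely many zeros in the comparison class of $b$.

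Now Lemma~\ref{lemma:arithmetics_t5_union_of_equiv_classes} produces a $d \in \N$ such that the zero set of $f_j$ is a union of equivalence classes modulo $I_d$, containing in particular $b + I_d$. By Lemma~\ref{lemma:arithmetics_t5_d_equiv_classes} this class contains a standard representative $k \in \{0,\ldots,d-1\}$, and since $d \in I_d$ the entire progression $k, k+d, k+2d, \ldots$ consists of zeros of $f_j$. For all sufficiently large $m$, the standard element $k+md$ lies past the finitely many exceptional points of Lemma~\ref{lemma:arithmetics_t5_term_polynomial}, so $t_{j,\ell}(k+md) = g_{t_{j,\ell}}(k+md)$ and hence $L_j$ fails at the standard element $k+md$, contradicting that $L_j$ holds on the entire standard part. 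The main obstacle is the separation of positive and negative literals in the second paragraph; once that is established, the remainder essentially replays the $\mathrm{ILiteral}$-direction of the proof of Theorem~\ref{theorem:arithmetics_t5_b_impl_iliteral}, with an added pigeonhole step to locate a single offending polynomial.
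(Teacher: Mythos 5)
Your proof is correct and follows essentially the same route as the paper's: reduce to the case where all literals are negated atoms via Lemma~\ref{lemma:arithmetics_t5_atom_standard_part}, use a pigeonhole argument to isolate one negated literal whose associated polynomial has infinitely many roots in the comparison class of a hypothetical counterexample, and then use Lemmas~\ref{lemma:arithmetics_t5_union_of_equiv_classes} and~\ref{lemma:arithmetics_t5_d_equiv_classes} to transport those roots to the standard part, contradicting the induction hypothesis. The only (harmless) imprecision is the claim that the zero set contains $b + I_d$ itself; it need only contain the class of some root $b-m$, which suffices for the same conclusion.
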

\begin{proof}
    Take any model $\M$ of $T_5 + \mathrm{ILiteral}$. By Lemma~\ref{lemma:t5_iliteral_models}, we know that $\M$ extends to a commutative ring and by Lemma~\ref{lemma:arithmetics_t5_term_polynomial} we know that for every term $t$, there is a polynomial $g_t$ s.t.\ $g(b)$ and $t(b)$ have the same interpretation for almost all elements in $b \in \M$, except for finitely many standard elements.

    Take any dual clause $D(x) \equiv L_1 \land \dots \land L_n$.
    Assume that $\M \vDash D(0) \land \forall x : D(x) \rightarrow D(sx)$.
    If there is an $i \in \{1,\dots,n\}$ s.t.\ $L_i$ is an atom, then $\M \vDash L_i(s^n0)$ for any $n \geq 0$ and by Lemma~\ref{lemma:arithmetics_t5_atom_standard_part}, $\M \vDash \forall x : L_i(x)$. Thus, $\M \vDash D \leftrightarrow D'$, with $D'$ being obtained by deleting $L_i$ from $D$. We can therefore assume that every literal in $D$ is a negated atom $L_i(x) \equiv t_i^1 \neq t_i^2$. For any $i \leq n$, let $f_i$ be the associated polynomial s.t.\ $\M \vDash L_i(b)$ iff $f_i(b) = 0$ for almost all $b\in \M$ except for finitely many standard elements.

    Assume that there is some non-standard element $b \in M$ s.t.\ $\M \nvDash D(b)$. Then, $\M \nvDash D(b-m)$ for any $m \geq 0$. For any of these infinitely many $b-m$, one of the $L_i$ cannot hold. Thus, there is some $L_i$ with $\M \nvDash L_i(b-m)$ for infinitely many $m \geq 0$. This is equivalent to $f_i$ having infinitely many roots in the comparison class of $b$. By Lemma~\ref{lemma:arithmetics_t5_union_of_equiv_classes}, there is some $d\in\N$ s.t.\ the set of roots of $f_i$ is the union of certain equivalence classes modulo $I_d$. Pick any root and call it $b'$. By Lemma~\ref{lemma:arithmetics_t5_d_equiv_classes}, $b' \equiv k \mod{I_d}$ for some $k \in \{0,\dots, d-1\}$. Since $k \equiv k+md$ for any $m \in \Z$, we obtain that $f_i(k+md) = 0$ for any $m \in \Z$. In particular, $f_i$ has infinitely many roots in the standard part of $\M$, which contradicts our assumption that $\M \vDash D(0) \land \forall x : D(x) \rightarrow D(sx)$. Thus, such a $b$ cannot exist and $\M \vDash \forall x : D(x)$.
\end{proof}

\begin{remark}\label{remark:t5_prindepresult}
Shepherdson's result that $T_5 + \{\hyperref[ax:arithmetics_b1]{B1}, \ldots,
 \hyperref[ax:arithmetics_b7]{B7}\} \nvdash \hyperref[ax:arithmetics_cd]{C'_d}$ for all $d \geq 2$
 combined with our result that $T_5 + \{\hyperref[ax:arithmetics_b1]{B1}, \ldots,
 \hyperref[ax:arithmetics_b7]{B7}\} \approx T_5 + \mathrm{IDClause}$ gives another practically relevant independence result: $T_5 + \mathrm{IDClause} \nvdash C'_d$.
\end{remark}

\section{Conclusion}

We have obtained a complete characterisation, in terms of inclusion and strict inclusion, of
 the relationships of all subsystems of open induction considered in this paper.

Depending on the language and base theory, there is often some syntactic restriction of
 open induction which entails full open induction.
For example, with the base theory $T_2$ from Section~\ref{section:arithmetics_t2}, we have
 $T_2 + \mathrm{ILiteral} \vdash \mathrm{IOpen}$.
Moreover, the level of this restriction is not monotone in the complexity of
 the language or theory in the following sense:
considering the, less complicated, empty theory $T_0 = \emptyset$ over the same language
 as $T_2$, we have $T_0 + \mathrm{ILiteral} \nvdash \mathrm{IOpen}$.
On the other hand, with the, more complicated, base theory $T_5$ from
 Section~\ref{section:arithmetics_t5}, we also have
 $T_5 + \mathrm{ILiteral} \nvdash \mathrm{IOpen}$.

These results provide a solid logical basis for the study of a wide range of methods of automated
 inductive theorem proving for the language of arithmetic.
Moreover, they also provide several practically relevant independence results, see
 Remark~\ref{remark:t0_prindepresult}, Remark~\ref{remark:t2_prindepresult}, and Remark~\ref{remark:t5_prindepresult}.

While for theories without multiplication, quite elementary methods sufficed, for open induction
 in the full language of arithmetic, some ring theory was indispensable.
This is relevant, in particular, for possible extensions of such results to other inductive
 data types such as lists and trees where the lack of a sufficiently developed algebraic
 background theory makes the situation more difficult~\cite{Hetzl24Quantifier,Weiser25Subsystems}.

In order to complete our picture, it would be interesting to study open induction, and its subsystems,
 for extended languages, for example, when an ordering or the divisibility-relation is present.
From the perspective of automated inductive theorem proving, it would be useful to study open induction
 for general inductive data types and for specific examples, such as lists and trees.
Some results in this direction can be found in~\cite{Hetzl24Quantifier,Weiser25Subsystems}.

\bibliographystyle{alpha}
\bibliography{submitted_source_arxiv}

\end{document}